\documentclass[11pt,reqno]{amsart}

\usepackage[utf8]{inputenc}
\usepackage[T1]{fontenc}
\usepackage{amsmath,amssymb,amsthm}
\usepackage{mathtools}
\usepackage[margin=1.1in]{geometry}
\usepackage{enumitem}
\usepackage{booktabs}
\usepackage{longtable}
\usepackage{array}
\usepackage{xcolor}
\usepackage[colorlinks=true,linkcolor=blue!50!black,citecolor=blue!50!black,urlcolor=blue!50!black]{hyperref}

\theoremstyle{plain}
\newtheorem{theorem}{Theorem}[section]
\newtheorem{lemma}[theorem]{Lemma}
\newtheorem{proposition}[theorem]{Proposition}
\newtheorem{corollary}[theorem]{Corollary}

\theoremstyle{definition}
\newtheorem{definition}[theorem]{Definition}

\theoremstyle{remark}
\newtheorem{remark}[theorem]{Remark}
\newtheorem{convention}[theorem]{Convention}

\newcommand{\N}{\mathbb{N}}
\newcommand{\Z}{\mathbb{Z}}
\newcommand{\Q}{\mathbb{Q}}

\newcommand{\B}{\mathbb{B}}
\newcommand{\Ndel}{\N_{\delta}}
\newcommand{\Zdel}{\Z_{\delta}}
\newcommand{\Qdel}{\Q_{\delta}}
\newcommand{\lean}[1]{\texttt{#1}}

\begin{document}

\title[The
$\delta$-calculus: from distinction to arithmetic]{The
$\delta$-calculus: from distinction to arithmetic}

\author{Jonathan Washburn}
\address{Recognition Physics Institute, Austin, Texas, USA}
\email{jon@recognitionphysics.org}

\author{Milan Zlatanovi\'c}
\address{Department of Mathematics, Faculty of Science and Mathematics,
University of Ni\v{s}, Vi\v{s}egradska 33, 18000 Ni\v{s}, Serbia}
\email{zlatmilan@yahoo.com}

\date{}

\begin{abstract} 
Let $\delta$ denote the primitive act of distinction, formally realized as the one-step extension $r\mapsto Sr$ of a finite record. We study the generated $\delta$-orbit and its arithmetic presentation $\mathbb{N}_\delta$. We prove that the $\delta$-orbit is initial among $\delta$-algebras: every $\delta$-algebra admits a unique structure-preserving map from the orbit. The map is injective when the successor operation is injective and the base point is not a successor. If the $\delta$-algebra also satisfies induction for all predicates, the map is bijective and gives the unique isomorphism with the generated $\delta$-orbit.

The corresponding $\delta$-calculus is an intuitionistic first-order proof system over the signature $\{0,S,+,\cdot\}$. {The name is independent of $\delta$-reduction in the $\lambda$-calculus: here $\delta$ generates the arithmetic carrier, and is not a term rewriting rule.} Each derivation carries a ledger recording the use of the law of excluded middle, the limited principle of omniscience, Markov's principle, and induction on quantified formulas. The last entry does not affect whether a derivation is forced. Every closed formula derivable in the forced fragment is true in the standard model.

Starting from $\delta$, we construct the choice-free number tower
$\delta \leadsto \mathbb{N}_\delta \hookrightarrow \mathbb{Z}_\delta \hookrightarrow \mathbb{Q}_\delta$. The metatheoretic systems $\mathbb{N}$, $\mathbb{Z}$, and $\mathbb{Q}$ each admit an explicit injection into $\mathbb{N}_\delta$.

We also classify the recognition quotients of $(\mathbb{N}_\delta,+,0)$. Under the law of excluded middle, every recognizer is either injective or has kernel congruence $\equiv_{i,p}$ for a unique pair $i\geq0$, $p\geq1$. In the noninjective case, the quotient is the finite monogenic monoid $M(i,p)$. We determine the exact {metatheoretic} logical price of this classification. A decidable congruence together with an explicit pair of distinct related elements implies the classification without additional nonconstructive principles. For a decidable congruence different from equality, Markov's principle is needed. For an arbitrary congruence, the dichotomy requires the law of excluded middle. The reverse implications show that the last two prices are exact. {These prices are computed in the ambient metatheory. They are distinct from the syntactic ledger of derivations in the $\delta$-calculus.} The main results are formalized in Lean 4.
 \\
\vskip1mm
\noindent\textbf{MSC (2020):}  03F30, 03F50, 03B35, 68V20.

\noindent\textbf{Keywords:}
distinction, \(\delta\)-calculus,  constructive arithmetic,
choice-free number tower, recognition quotients.\end{abstract}

\maketitle

\setcounter{tocdepth}{2}

\section{Introduction}\label{sec:intro}

{ Let \(\delta\) denote the primitive act of distinction.
{We identify the canonical arithmetic generated by iterated distinction, construct its choice-free number tower through the rationals, and determine the exact metatheoretic  logical cost of the recognition quotients of its additive monoid.}

The ambient metatheory provides propositions, equality, and collections of
elements, which we call {\it carriers.} The ambient metatheory is intuitionistic. It provides function types, finite
products, inductively generated carriers with their recursion and induction
principles, quotients by equivalence relations, and the natural numbers \(\N\),
which also index the de Bruijn variables (Section~\ref{sec:calculus}).
Equality and order on \(\N\) are decidable. {We work in a fixed universe \(\mathrm{Type}_u\). A carrier is a type \(K:\mathrm{Type}_u\), and a predicate on \(K\) is a map \(P:K\to\mathrm{Prop}\). Equality is the ambient equality, and quotients are formed by equivalence relations on carriers. In Theorem~\ref{thm:rigidity}, induction applies to all such predicates, including the image predicate used for surjectivity.
} We do not use the law of excluded middle, the limited principle of
omniscience, Markov's principle, or the axiom of choice in the metatheory,
except where explicitly assumed, as in
Section~\ref{sec:classificationledger}. The ledger (Section~\ref{ssec:ledger}) records principles
used inside the $\delta$- calculus.  

The purpose of this construction is not to obtain another copy of the natural numbers, since the metatheory already contains \(\mathbb N\). The generated \(\delta\)-orbit is instead the canonical object obtained from the empty record by repeated one-step extension. By Theorem~\ref{thm:rigidityheadline}, it is initial among \(\delta\)-algebras, and the Peano conditions characterize it up to a unique isomorphism. Thus the resulting arithmetic is unique up to isomorphism.

Each element of the orbit records a finite number of successive distinction steps, and the later stages of the number tower are constructed explicitly from the generated carrier by group completion and fraction construction.
 
The main question is therefore not what the natural numbers are, but what can be constructed from the generated carrier and at what logical price.  For the recognition quotients of \((\Ndel,+,0)\), this price is exact: a decidable congruence together with an explicit pair of distinct related elements implies the classification without additional nonconstructive principles; decidability alone requires Markov's principle; and an arbitrary congruence requires excluded middle.

\begin{definition}\label{def:floor}
Let \(K\) be a carrier. Write
\[
\operatorname{Dist}(K)
\;:\Longleftrightarrow\;
\exists x,y\in K,\quad x\neq y.
\]
A proof \(h:\operatorname{Dist}(K)\) is called a \emph{distinction witness} on
\(K\).
\end{definition}

The predicate \(\operatorname{Dist}(K)\) uses no additional structure on \(K\) beyond the equality of the ambient metatheory. A carrier \(K\) is called nontrivial if
\(\operatorname{Dist}(K)\) holds. \(\operatorname{Dist}(K)\) gives two unequal elements, but it gives no operation which can be iterated.
{The predicate is not used to generate the \(\delta\)-orbit. The symbol \(\delta\) is only the name of the one-step extension \(r\mapsto Sr\) in Definition~\ref{orbit}. A distinction witness does not produce \(0\), \(S\), or induction.}

{The word \emph{distinction} has  formal and philosophical uses
\cite{lof,wengert}. Spencer-Brown's \emph{Laws of Form}~\cite{lof} develops
a calculus from a mark of distinction. Our construction is different. The
primitive is not a mark in a plane, but the one-step extension of a finite
record. The generated object is the \(\delta\)-orbit. The condensation rules
of~\cite{lof} absorb repeated marking, whereas each application of \(S\)
produces a further stage of the generated record. Successor nonzero,
successor injectivity, and induction then give the natural-number object.

A related physical idea appears in Wheeler's thesis \emph{it from bit}, where
particles, fields, and spacetime receive their physical meaning from elementary
yes/no registrations~\cite{wheeler1992}. Since each such registration
presupposes two distinguishable alternatives, here we study the prior
mathematical step: the finite iteration of distinction. We do not use the calculus of~\cite{lof}, and
the scholastic doctrine of formal distinction~\cite{wengert} is not part of
the construction.

In constructive algebra, a positive apartness relation is often used in place
of negative inequality~\cite{crvmitrom2013,crvmitrom2016,mhb2022}. No
apartness relation is assumed here.} {The ambient host is an intuitionistic type theory in the sense of Martin-Löf~\cite{mltt}, close to the Calculus of Constructions~\cite{coquandhuet}. We do not introduce a new type theory. Constructive reverse mathematics~\cite{ishihara,diener} asks which principles suffice for a theorem, the ledger of Section~\ref{ssec:ledger} instead records what a given derivation uses.
}
\begin{definition}\label{orbit}
The \emph{\(\delta\)-orbit} \(\operatorname{Orb}(0,S)\) is the collection of finite records generated
by two formation rules:\[
\frac{}{\;0\ \text{is a record}\;},
\qquad
\frac{\;r\ \text{is a record}\;}{\;S r\ \text{is a record}\;} .
\]
Here \(0\) is the empty record, and \(S r\) is the record obtained from \(r\) by
one further act of distinction.
\end{definition}

\begin{proposition}\label{prop:orbitlaws}
For all \(r,q\in\operatorname{Orb}(0,S)\),
\[
Sr\neq0,
\qquad
Sr=Sq\longrightarrow r=q.
\]
Moreover, if a predicate \(P\) on \(\operatorname{Orb}(0,S)\) satisfies
\(P(0)\) and
\[
P(r)\longrightarrow P(Sr)
\quad\text{for every }r,
\]
then \(P(r)\) holds for every \(r\in\operatorname{Orb}(0,S)\).
\end{proposition}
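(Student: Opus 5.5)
We read Definition~\ref{orbit} as introducing $\operatorname{Orb}(0,S)$ as an inductively generated carrier in the ambient metatheory, with exactly two constructors $0$ and $S$. The metatheory supplies the recursion and induction principles for such carriers. The plan is to get all three claims from these principles alone, without excluded middle.

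\emph{Induction.} This is the elimination principle of the inductive carrier, specialised to a predicate $P:\operatorname{Orb}(0,S)\to\mathrm{Prop}$. Given $P(0)$ and a proof of $P(r)\to P(Sr)$ for all $r$, the eliminator yields a proof of $P(r)$ for every record $r$. There is nothing further to check.

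\emph{Successor nonzero.} We use recursion into $\mathrm{Prop}$ to build a discriminator $D:\operatorname{Orb}(0,S)\to\mathrm{Prop}$ with $D(0)=\bot$ and $D(Sr)=\top$. Suppose $Sr=0$. Transporting the trivial proof of $D(Sr)=\top$ along this equality gives a proof of $D(0)=\bot$, so $Sr\neq0$ holds as a negation, constructively. If the universe setup makes large elimination awkward, we would instead recurse into $\N$ with $0\mapsto 0$ and $Sr\mapsto 1$, and then invoke decidable equality on $\N$.

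\emph{Injectivity.} We define a predecessor $\operatorname{pred}$ by recursion, with $\operatorname{pred}(0)=0$ and $\operatorname{pred}(Sr)=r$. Applying $\operatorname{pred}$ to both sides of $Sr=Sq$ (congruence of equality) gives $r=q$.

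The main obstacle is justifying that the eliminator is available with motives into $\mathrm{Prop}$ and into the carrier itself. Definition~\ref{orbit} phrases the orbit by formation rules, not explicitly as a type. We would make explicit that the rules are read as the inductive type generated by those rules, which the stated metatheory provides. The rest is routine; in Lean these facts are the automatically generated \lean{noConfusion} and constructor injectivity lemmas.
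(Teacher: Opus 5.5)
Your proposal is correct and matches the paper's proof: induction is the eliminator of the inductively generated orbit, and injectivity comes from the recursively defined predecessor with $\operatorname{pred}(0)=0$, $\operatorname{pred}(Sr)=r$. For $Sr\neq0$, the paper uses your fallback, a recursion into $\B=\{0,1\}$ sending $0\mapsto0$ and $Sr\mapsto1$ so that $Sr=0$ would force $0=1$, which avoids the large-elimination question raised by your primary $\mathrm{Prop}$-valued discriminator.
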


\begin{proof}
The orbit is inductively generated, so it carries the recursion principle:
for every carrier \(K\), every \(k\in K\) and every
\(f\colon\operatorname{Orb}(0,S)\times K\to K\) there is a map
\(g\colon\operatorname{Orb}(0,S)\to K\) with \(g(0)=k\) and
\(g(Sr)=f(r,g(r))\).

For \(Sr\neq0\), apply the recursion principle to
\(K=\B=\{0,1\}\), with \(k=0\) and \(f(r,z)=1\).
Then \(g(0)=0\) and \(g(Sr)=1\), so \(Sr=0\) would imply \(0=1\).

{
For injectivity, let us define
\(\operatorname{pred}\colon\operatorname{Orb}(0,S)\to
\operatorname{Orb}(0,S)\) by recursion with \(k=0\) and \(f(r,z)=r\), so that
\(\operatorname{pred}(Sr)=r\) for every \(r\). Hence \(Sr=Sq\) gives
\[
r=\operatorname{pred}(Sr)=\operatorname{pred}(Sq)=q.
\]}

The last statement is the structural induction principle of the inductively generated orbit.
\end{proof}

The orbit is not defined as the subset
\(
\{S^n0:n\in\N\}
\)
of a pre-existing carrier, so its definition does not use a prior
natural-number object as an index. The notation \(S^n0\) is used only
metatheoretically, for the result of applying \(S\) \(n\) times to \(0\). The
successor laws and the induction principle for the generated orbit are proved
in Proposition~\ref{prop:orbitlaws}.

{The rigidity theorem (Theorem~\ref{thm:rigidityheadline}) reverses the usual direction of explanation. The generated $\delta$-orbit is not merely another model of the Peano conditions. It is the object those conditions describe. Initiality sends the orbit canonically into every $\delta$-algebra. When the successor map is injective and the base point is not a successor, this canonical map is an embedding, so every such faithful realization already contains a copy of the orbit. Induction makes the map surjective, and initiality makes the resulting isomorphism unique. The Peano conditions  do not create an arithmetic structure by stipulation. They characterize, up to a unique structure-preserving relabeling, the arithmetic generated by repeated distinction. This is a representation-invariance statement within an ambient metatheory supporting inductive generation.}

Initiality and Peano categoricity are classical~\cite{lawvere}. Here we combine them with distinction-based generation, the choice-free number tower, and the exact {metatheoretic logical prices} of recognition quotients.




\medskip
The \(\delta\)-calculus is an intuitionistic first-order proof system over the signature \(\{0,S,+,\cdot\}\), defined in Section~\ref{sec:calculus}. The symbol \(\delta\) is not part of the formal language. {In particular, \(\delta\) is not the \(\delta\)-rule of the \(\lambda\)-calculus~\cite{barendregt}.} The calculus is called the \(\delta\)-calculus because its symbols \(0\) and \(S\), the distinction axioms, and the induction rule come from the generated \(\delta\)-orbit.
In the arithmetic projection (Definition \ref{def:arithprojection}), the empty record is written as \(0\), and the
one-step extension of a record is written by the successor symbol \(S\). At the first-order level, the successor theorems become the distinction axioms
\[
S\,t\neq 0,\qquad S\,t=S\,s\longrightarrow t=s.
\]
They are validated in the standard model \(\mathfrak N=(\mathbb N,0,S,+,\cdot)\). The operations \(+\) and \(\cdot\)
are governed by their recursion equations.

Let \(\Ndel\) denote the arithmetic structure carried by the generated \(\delta\)-orbit, with \(+\) and \(\cdot\) defined by recursion. Its successor laws and induction are given by Proposition~\ref{prop:orbitlaws}.

We fix the generation grammar consisting of the generation of \(\Ndel\) from
the primitive act \(\delta\), followed by group completion and fraction
construction. The stages of the construction are
\[
\delta \leadsto \Ndel
\xhookrightarrow{\iota_{\Zdel}} \Zdel
\xhookrightarrow{\iota_{\Qdel}} \Qdel .
\]

The first arrow denotes the inductive generation of the \(\delta\)-orbit of
finite records and the natural-number object presented by it. 
The next stage is
the group completion
\[
\Zdel :=(\Ndel\times \Ndel)/{\sim},
\qquad
(a,b)\sim(c,d)
\Longleftrightarrow
a+d=c+b .
\]
The map \(\iota_{\Zdel}\) is the canonical embedding of \(\Ndel\) into this
quotient, given by \(a\mapsto [(a,0)]\).

The next stage is the fraction construction \(\Qdel\), presented by
ratio orbits with signed-orbit numerators and nonzero denominators in
\(\Ndel\), as defined in Section~\ref{sec:tower}. The canonical embedding
\[
\iota_{\Qdel}\colon\Zdel\longrightarrow\Qdel
\]
is induced by adjoining denominator \(1\).

Thus the integer and rational stages are defined by their internal quotient
relations. The comparison maps into the standard systems \(\mathbb Z\) and
\(\mathbb Q\) are constructed  after these quotients are formed.  The construction is choice-free. In the \(\delta\)-calculus, each derivation carries a ledger recording the nonconstructive principles used. The main
results are verified in Lean~4 (Section~\ref{sec:lean}).

\begin{definition}\label{def:encodable}
For a carrier \(X\), write
\[
\operatorname{Enc}_\delta(X)
\;:\Longleftrightarrow\;
\text{there exists an injection } X\hookrightarrow \Ndel .
\]
An object satisfying this predicate is called {\it \(\delta\)-encodable.}
\end{definition}

 We prove that
the metatheoretic number systems \(\mathbb N\), \(\mathbb Z\), and \(\mathbb Q\)
are \(\delta\)-encodable.
In this paper, we stop the tower at \(\Qdel\). The real-number stage is not treated in this paper. Its construction gives a further constructive problem, since the Cauchy and Dedekind constructions are not always
equivalent.

\subsection{Main results}\label{ssec:main}
{
{The paper has three main results. The first is the initiality and rigidity
of the generated $\delta$-orbit. The second is the construction of the number tower from the generated \(\delta\)-orbit. The third is the classification of the recognition quotients of \((\Ndel,+,0)\), together with {their exact metatheoretic logical prices}.}

The third result contains the new quantitative part of the paper. The classical index-period classification of congruences on a monogenic monoid~\cite{birkhoff,cliffordpreston} is refined by assigning exact logical prices to its three forms. The reverse implications of Corollary~\ref{cor:exact} show that the Markov and excluded-middle assumptions cannot be weakened. The three main results are naturally ordered: the first identifies the generated carrier, the second constructs its choice-free number tower, and the third gives the exact { metatheoretic} logical cost of classifying its recognition quotients.

We use the following standard universal property of the term syntax with de Bruijn variables~\cite{debruijn}.

\begin{proposition}\label{prop}
For every algebra \(A\) of the signature \(\{0,S,+,\cdot\}\), with carrier
\(|A|\), and every assignment
\(
\rho\colon\mathbb N\to |A|,
\)
there exists a unique homomorphism from the term algebra to \(A\) extending
\(\rho\). Thus the term syntax is the free algebra of this signature on the
de Bruijn variables.
\end{proposition}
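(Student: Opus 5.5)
The plan is to realize the term algebra as an inductively generated carrier and to use its recursion and induction principles, in the same way Proposition~\ref{prop:orbitlaws} used the recursion principle of the orbit. Terms are generated by five formation rules: a variable $\#n$ for each $n\in\mathbb N$ (de Bruijn index), the constant $0$, and the constructors $S\,t$, $t+s$ and $t\cdot s$. This carrier is itself an algebra of the signature $\{0,S,+,\cdot\}$, with the constructors as operations.

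\emph{Existence.} Given $A$ and $\rho\colon\mathbb N\to|A|$, I will define $\llbracket-\rrbracket_\rho$ by structural recursion:
\[
\llbracket \#n\rrbracket_\rho=\rho(n),\quad
\llbracket 0\rrbracket_\rho=0^A,\quad
\llbracket S t\rrbracket_\rho=S^A\llbracket t\rrbracket_\rho,\quad
\llbracket t+s\rrbracket_\rho=\llbracket t\rrbracket_\rho+^A\llbracket s\rrbracket_\rho,\quad
\llbracket t\cdot s\rrbracket_\rho=\llbracket t\rrbracket_\rho\cdot^A\llbracket s\rrbracket_\rho .
\]
The clauses for $0,S,+,\cdot$ are exactly the homomorphism equations, so they hold definitionally. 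The first clause says that $\llbracket-\rrbracket_\rho$ extends $\rho$ along $n\mapsto\#n$.

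\emph{Uniqueness.} Let $h$ be any homomorphism with $h(\#n)=\rho(n)$ for all $n$. I will prove $h(t)=\llbracket t\rrbracket_\rho$ by structural induction on $t$, applied to the predicate $P(t):\Leftrightarrow h(t)=\llbracket t\rrbracket_\rho$.
\begin{itemize}
\item The variable case is the extension hypothesis.
\item The $0$ case uses $h(0)=0^A$.
\item In the $S$, $+$ and $\cdot$ cases, the homomorphism property of $h$ rewrites $h(St)$ as $S^A h(t)$, and similarly for the binary operations. The induction hypotheses for the immediate subterms then finish the case.
\end{itemize}
This gives pointwise equality. Since the metatheory has function types, pointwise equality is the required equality of maps.

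No nonconstructive principle enters, because only recursion and induction of an inductive carrier are used. The only point needing care, which I expect to be the main obstacle if any, is that the term carrier is genuinely inductively generated with the branching constructors $+$ and $\cdot$. That is, the recursion principle must allow two recursive calls, unlike the unary orbit. This is supplied by the ambient metatheory's inductive types, as assumed in Section~\ref{sec:intro}. Freeness on the de Bruijn variables is then just a restatement of the unique-extension property.
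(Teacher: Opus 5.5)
Your proposal is correct and follows the paper's proof: existence by structural recursion with the same five clauses, and uniqueness by structural induction on terms, using the homomorphism equations and the extension hypothesis. One small refinement is that passing from pointwise equality to equality of maps is function extensionality, which does not follow merely from having function types; it is available in the paper's host (in Lean it is derived from \texttt{Quot.sound}), and the paper makes the same step implicitly.
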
}

Here \(\mathbb N\) is  the metatheoretic index set for the de Bruijn
variables.
The recursion equations for \(+\) and \(\cdot\) are validated in the standard model
\[
\mathfrak{N}=(\mathbb N,0,S,+,\cdot),
\]
with the usual
operations. The resulting
arithmetic is developed in Section~\ref{ssec:b2}.     

 {\begin{theorem}\label{thm:rigidityheadline}
Let $A=(|A|,0_A,S_A)$ be a $\delta$-algebra. There exists a unique
homomorphism
\(
\operatorname{rec}_A\colon\Ndel\longrightarrow A.
\)
If $S_A$ is injective and $0_A\notin\operatorname{im}(S_A)$, then
$\operatorname{rec}_A$ is injective. If, in addition, $A$ satisfies
induction for all predicates, then $\operatorname{rec}_A$ is bijective
and is the unique $\delta$-algebra isomorphism from $\Ndel$ to $A$.
\end{theorem}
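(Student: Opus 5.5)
The plan is to take $\Ndel$ to be the generated $\delta$-orbit $\operatorname{Orb}(0,S)$ and to derive every part of the statement from the recursion and induction principles recorded in the proof of Proposition~\ref{prop:orbitlaws}. A $\delta$-algebra is a carrier $|A|$ with a point $0_A$ and an endomap $S_A$, and a homomorphism is a map $h$ with $h(0)=0_A$ and $h(Sr)=S_A(h(r))$.

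\emph{Existence and uniqueness.} Apply the recursion principle with $K=|A|$, $k=0_A$ and $f(r,z)=S_A z$. This gives $\operatorname{rec}_A$ with $\operatorname{rec}_A(0)=0_A$ and $\operatorname{rec}_A(Sr)=S_A(\operatorname{rec}_A r)$, which is a homomorphism. For uniqueness, let $h$ be any homomorphism and apply the induction clause of Proposition~\ref{prop:orbitlaws} to the predicate $P(r):\Leftrightarrow h(r)=\operatorname{rec}_A(r)$. The base case $P(0)$ holds because both sides equal $0_A$. For the step, assume $h(r)=\operatorname{rec}_A(r)$; applying $S_A$ to both sides gives $P(Sr)$.

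\emph{Injectivity.} Here I will prove by induction on $r$ the statement $Q(r):\Leftrightarrow \forall q\,(\operatorname{rec}_A r=\operatorname{rec}_A q\to r=q)$. The induction hypothesis must be quantified over $q$, and the inner argument needs a case split on $q$. This is the step needing care, because we work intuitionistically. The split "$q=0$ or $q=Sq'$" is obtained by a second structural induction on $q$, so no excluded middle is used.
\begin{itemize}
\item Case $r=0$, $q=0$: immediate.
\item Case $r=0$, $q=Sq'$: the hypothesis gives $0_A=S_A(\operatorname{rec}_A q')$, contradicting $0_A\notin\operatorname{im}(S_A)$. Hence the implication holds vacuously.
\item Case $r=Sr'$, $q=0$: symmetric to the previous case.
\item Case $r=Sr'$, $q=Sq'$: injectivity of $S_A$ gives $\operatorname{rec}_A r'=\operatorname{rec}_A q'$. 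The hypothesis $Q(r')$ then gives $r'=q'$, so $Sr'=Sq'$.
\end{itemize}

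\emph{Surjectivity.} Assume in addition that $A$ satisfies induction for all predicates. Apply that induction to the image predicate $I(a):\Leftrightarrow\exists r\in\Ndel,\ \operatorname{rec}_A r=a$. We have $I(0_A)$ via $r=0$. If $I(a)$ holds with witness $r$, then $\operatorname{rec}_A(Sr)=S_A a$, so $I(S_A a)$ holds. Hence $\operatorname{rec}_A$ is surjective, and together with injectivity it is bijective.

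A bijection still does not give a function $|A|\to\Ndel$ without choice. I will avoid this by noting that, by injectivity, each fiber contains at most one element, so unique choice (available in the ambient type theory) yields the inverse $g$. Then $g$ is a homomorphism: $g(0_A)=0$, and $g(S_A a)=S(g a)$ follows by applying the injective map $\operatorname{rec}_A$ to both sides. So $\operatorname{rec}_A$ is a $\delta$-algebra isomorphism. Finally, any isomorphism $\Ndel\to A$ is in particular a homomorphism, hence equals $\operatorname{rec}_A$ by the uniqueness part. The main obstacle is the intuitionistic bookkeeping in the injectivity induction; the remaining steps are routine.
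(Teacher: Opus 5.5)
Your proposal is correct and follows the paper's proof essentially step for step. Existence comes from the orbit's recursion principle and uniqueness from structural induction (Theorem~\ref{thm:initiality}). Injectivity is proved by induction on one argument with a case split on the other (Theorem~\ref{thm:embedding}). Surjectivity comes from induction in $A$ on the image predicate, and uniqueness of the isomorphism from initiality (Theorem~\ref{thm:rigidity}).

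Your only addition is the explicit inverse obtained by unique choice; the paper constructs no inverse and simply treats the bijective homomorphism as the isomorphism. Unique choice is not automatic in a Lean-style theory with an impredicative $\mathrm{Prop}$, so that step is an extra assumption on the metatheory, not something the stated result needs.
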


Theorem~\ref{thm:rigidityheadline} follows from
Theorems~\ref{thm:initiality}, \ref{thm:embedding},
and~\ref{thm:rigidity}, proved in Section~\ref{sec:rigidity}.
}
\begin{theorem}\label{thm:construction}
The following hold by choice-free constructions.
\begin{enumerate}[label=(\alph*)]
 \item The carriers \(\Ndel\), \(\Zdel\), and \(\Qdel\) are constructed from
the generated \(\delta\)-orbit. The carrier \(\Ndel\) is the natural-number
object presented by this orbit, \(\Zdel\) is the group completion of
\(\Ndel\), and \(\Qdel\) is the field of fractions of \(\Zdel\), presented
by ratio orbits with signed-orbit numerators. 

\item The metatheoretic number systems \(\mathbb N\), \(\mathbb Z\), and
\(\mathbb Q\) are \(\delta\)-encodable.
\end{enumerate}
\end{theorem}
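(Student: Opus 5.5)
Part (a) is a sequence of explicit constructions, each using only inductive generation, recursion, and quotients by decidable equivalence relations, so no choice is invoked. First, \(\Ndel\) is the \(\delta\)-orbit of Definition~\ref{orbit}. I define \(+\) and \(\cdot\) on it by the recursion principle from the proof of Proposition~\ref{prop:orbitlaws}: \(a+0=a\), \(a+Sb=S(a+b)\), and \(a\cdot 0=0\), \(a\cdot Sb=a\cdot b+a\). The commutative semiring laws, additive cancellation, and the absence of zero divisors then follow by structural induction (Proposition~\ref{prop:orbitlaws}).

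For \(\Zdel\), I first check that \(\sim\) is an equivalence relation. Transitivity uses additive cancellation: from \(a+d=c+b\) and \(c+f=e+d\), adding gives \(a+f+(c+d)=e+b+(c+d)\), and cancelling yields \(a+f=e+b\). I then define \([(a,b)]+[(c,d)]=[(a+c,b+d)]\), negation \([(b,a)]\), and multiplication \([(ac+bd,ad+bc)]\), and verify that these respect \(\sim\). Since the definitions are explicit formulas, they descend through the quotient's universal property and no representatives are chosen. Injectivity of \(\iota_{\Zdel}\) follows because \((a,0)\sim(c,0)\) means \(a=c\).

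For \(\Qdel\), I take pairs \((z,n)\) with \(z\in\Zdel\) and \(n\in\Ndel\setminus\{0\}\), and quotient by \(z\cdot\iota(m)=w\cdot\iota(n)\). Transitivity requires cancelling the nonzero factor \(\iota(m)\), which uses the integral-domain property of \(\Zdel\) inherited from \(\Ndel\). The field operations are defined by the usual formulas. The inverse of a nonzero \([(z,n)]\) needs the sign of \(z\). This is decidable, because equality on \(\Ndel\) is decidable by double induction and the sign is obtained by comparing the two components of a representative pair. Finally, \(\iota_{\Qdel}(z)=[(z,1)]\) is injective since \(z\cdot 1=w\cdot 1\) gives \(z=w\).

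For part (b), I use Theorem~\ref{thm:rigidityheadline}. The metatheoretic \(\mathbb N\) is a \(\delta\)-algebra with injective successor, \(0\) is not a successor, and \(\mathbb N\) satisfies induction, so \(\mathbb N\cong\Ndel\). Composing with this isomorphism reduces the task to giving injections \(\mathbb Z\hookrightarrow\mathbb N\) and \(\mathbb Q\hookrightarrow\mathbb N\). For \(\mathbb Z\), I map \(n\ge0\) to \(2n\) and \(-n-1\) to \(2n+1\). For \(\mathbb Q\), I use the reduced representation \(p/q\) with \(q\ge1\), which is computed from any representative by the Euclidean gcd, and send it to \(\pi(\text{code}(p),q)\), where \(\pi\) is the Cantor pairing function. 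Injectivity holds because reduced forms are unique and \(\pi\) is injective.

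The main obstacle is in part (a): proving constructively that the operations on \(\Zdel\) and \(\Qdel\) are well defined, and that transitivity of the fraction relation holds. Both rely on cancellation laws. I would establish additive cancellation and the no-zero-divisor property in \(\Ndel\) first, by induction, and then transport them to \(\Zdel\) before building \(\Qdel\).
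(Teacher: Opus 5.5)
Your proposal is correct. Part (b) is essentially the paper's argument: the same zigzag map \(e_{\Z}\), reduced fractions via the Euclidean algorithm, and Cantor pairing. The only difference is that the paper uses \(\nu\) directly with \(D\circ\nu=\mathrm{id}_{\N}\) rather than invoking rigidity. The two agree, since \(\operatorname{rec}_{\N}=D\) (Remark~\ref{rem:displayinstance}). The recursively defined \(\nu\) is also what actually supplies the injection \(\N\hookrightarrow\Ndel\), without your having to extract an inverse from a bijection.

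Part (a) follows a genuinely different route. You verify the algebra internally:
\begin{itemize}
\item semiring laws and cancellation on \(\Ndel\) by structural induction;
\item ring laws on \(\Zdel\) on representatives;
\item inverses on \(\Qdel\) through a decidable sign and magnitude.
\end{itemize}
The paper instead constructs the bijections \(D\), \(D_{\Z}\), \(D_{\Q}\) onto \(\N\), \(\Z\), \(\Q\), and checks that they preserve \(0,1,+,\cdot,-\). It then transports the semiring, ring, and field structures (Proposition~\ref{prop:natarith}, Corollaries~\ref{cor:ziso} and~\ref{cor:qfield}), and never builds inverses in \(\Qdel\) by hand. The paper's route is shorter and gives the comparison isomorphisms as a byproduct, but it borrows the algebra from the metatheoretic systems. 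Yours stays inside the orbit, at the cost of writing out every axiom check.

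One presentational point: you take numerators in \(\Zdel\), which gives an iterated quotient. The statement and Definition~\ref{def:ratio} instead use signed-orbit numerators. That is, the paper forms a single quotient of \((\Ndel\times\Ndel)\times(\Ndel\setminus\{0\})\), whose cross-equality is decidable directly from balance. The two are canonically isomorphic via \((u,d)\mapsto([u],d)\), so you should either use signed-orbit numerators or record this isomorphism.

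Also, transitivity of the fraction relation only needs cancellation of a nonzero \(d\in\Ndel\) with respect to balance (Lemma~\ref{lem:balancecancel}). You do not need the full integral-domain property of \(\Zdel\).
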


 The classification of the
recognition quotients of \((\Ndel,+,0)\) has three forms, depending on the
assumptions imposed on the congruence.

\begin{theorem}\label{thm:pricing}
Let \(c\) be a congruence on \((\Ndel,+,0)\). The index-period classification
has the following three forms:
\begin{enumerate}[label=(\alph*)]
\item if \(c\) is decidable and an explicit pair of distinct related elements
is given, then
\(
c=\equiv_{i,p}
\)
for a unique pair \(i\geq0\), \(p\geq1\), without using EM, LPO, or MP.

\item If \(c\) is decidable and is not equality, then the same conclusion is
conditional on \(\mathrm{MP}\).

\item For an arbitrary congruence \(c\), it is conditional on \(\mathrm{EM}\) that \(c\) is either equality or
\(
c=\equiv_{i,p}
\)
for a unique pair \(i\geq0\), \(p\geq1\).
\end{enumerate}
Conversely, the general statements in \((b)\) and \((c)\) imply \(\mathrm{MP}\) and \(\mathrm{EM}\), respectively. Thus these two {metatheoretic logical prices} cannot be lowered. 
\end{theorem}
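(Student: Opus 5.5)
The plan is to first prove the classical structure fact for congruences on $(\Ndel,+,0)$: a congruence containing one pair of distinct related elements equals $\equiv_{i,p}$. Here $\equiv_{i,p}$ relates $a$ and $b$ iff $a=b$, or $a,b\ge i$ and $a\equiv b \pmod p$. The three forms of the statement then differ only in how that pair is found, and the converses come from encoding a proposition into a congruence.

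\textbf{Step 1 (form (a)).} Let $c$ be decidable and let $m<n$ with $c(m,n)$ be given. Since $c$ is a congruence, $c(m,n)$ implies $c(m+k,n+k)$ for all $k$. Consider the finite set of pairs $(a,b)$ with $a<b\le n$. Decidability of $c$ lets me search it by bounded search, which is constructive. I take $i$ minimal such that $c(i,b)$ holds for some $b>i$. Then I take $p\ge1$ minimal with $c(i,i+p)$.

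I then verify $c=\equiv_{i,p}$ by induction on $\Ndel$. In one direction, $c(i,i+p)$ propagates by translation and transitivity to all pairs $a,b\ge i$ with $a\equiv b \pmod p$. In the other direction, suppose $c(a,b)$ with $a<b$. Minimality of $i$ gives $a\ge i$. Translating down to $i$ and using minimality of $p$ forces $p\mid b-a$; this uses a division with remainder argument on $\Ndel$. Uniqueness of $(i,p)$ holds because $i$ and $p$ are recovered from $\equiv_{i,p}$ as minimal witnesses.

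I expect the main obstacle to be this second direction. The difficulty is showing that $c(a,b)$ with $a\ge i$ yields $c(i,i+r)$ for $r=(b-a)\bmod p$ without cancellation, since $+$ on a monogenic quotient is not cancellative below the index. I would handle it by working in the quotient $M(i,p)$, where elements $\ge i$ form a cyclic group, and cancelling there.

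\textbf{Step 2 (form (b)).} If $c$ is decidable and $c\neq{=}$, then $\neg\neg\exists a\neq b.\,c(a,b)$. The predicate ``$c(a,b)$ with $a\ne b$'' is decidable on pairs enumerated through a pairing on $\Ndel$. MP therefore yields an explicit pair, and Step 1 applies.

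\textbf{Step 3 (form (c)).} EM decides whether $\exists a\neq b.\,c(a,b)$. In the positive case the witness is obtained, and a minimal $i$ and $p$ are chosen via the least number principle, which follows classically from EM. The argument of Step 1 then goes through verbatim.

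\textbf{Step 4 (converses).} For MP, let $f:\Ndel\to\B$ satisfy $\neg\forall n\,f(n)=0$. Define $c(a,b)$ iff $a=b$, or $\exists k\le\min(a,b)$ with $f(k)=1$ and $a,b\ge k$. This is a decidable congruence, of the form $\equiv_{k,1}$ once a hit occurs at $k$. It is not equality, because equality would force $\forall n\,f(n)=0$. Applying (b) gives explicit $(i,p)$, and $c(i,i+p)$ produces an $n$ with $f(n)=1$, so MP holds.

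For EM, given a proposition $P$, set $c(a,b)$ iff $a=b\vee P$. This is a congruence. If $c$ is equality, then $\neg P$. If $c=\equiv_{i,p}$, then $c(i,i+p)$ gives $P$. Hence (c) yields $P\vee\neg P$.
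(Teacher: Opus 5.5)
Your Steps~2--4 agree with the paper: the MP extraction of an explicit pair, EM with the least number principle, and the same congruences $x=y\vee\exists k\le\min(x,y)\,\theta(k)$ and $x=y\vee P$ for the converses. The gap is in Step~1.

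You take $i$ minimal among first coordinates of related pairs in the window $a<b\le n$. You then assert that ``minimality of $i$ gives $a\ge i$'' for an \emph{arbitrary} related pair $a<b$. A pair with $b>n$ is never examined by your search, so nothing you have shown excludes $a<i$. What you need is that a point carrying some positive period also carries a small one. This downward transport of periods is the paper's key lemma (Lemma~\ref{lem:descent}): if $d>0$ is a period at $u$ and $g>0$ is a period at some $v\ge u$, then $g$ is a period at $u$. To see it, choose $k$ with $w=u+kd\ge v$; then $u\sim w\sim w+g$, and adding $g$ to $u\sim w$ gives $u+g\sim w+g$. Apply it to the period $b-a$ at $a$ and the period $p$ at $i$. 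You get $c(a,a+p)$ with $a+p<i+p\le n$, which contradicts your choice of $i$.

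The paper avoids the window problem by searching in the opposite order. It first finds the least positive period $g$ at $u$ (bounded by $v-u$), then the least point at which $g$ is a period (bounded by $u$). Both bounded searches then return global minima, and the downward transport is done once, in Lemmas~\ref{lem:gdesc} and~\ref{lem:div}.

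The cancellation step also needs correcting. In $M(i,p)$ you do not yet know $[a]=[b]$; that is what you are trying to prove. The correct setting is $\Ndel/c$. Your first direction gives ${\equiv_{i,p}}\subseteq c$, so $\Ndel/c$ is a quotient of $M(i,p)$. Its tail $T=\{[x]_c: x\ge i\}$ is a homomorphic image of the cyclic group of order $p$, hence a group with some identity $e$. From $[a]_c=[a]_c+[b-a]_c$ and $a\ge i$ you get $e=e+[b-a]_c$. Hence $[i]_c=[i+(b-a)]_c=[i+r]_c$ with $r=(b-a)\bmod p$, and minimality of $p$ forces $r=0$.

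The same picture also repairs the first gap. For large $k$ the class $[a]_c=[a+k(b-a)]_c$ lies in $T$, and adding $p$ fixes every class of $T$, so $c(a,a+p)$. With both repairs your route is a valid alternative. It replaces the paper's chain of period lemmas by the group structure of the tail of $\Ndel/c$, whereas the paper works directly with elements and needs no quotient.
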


{The three prices have a simple meaning. If a pair of distinct related elements is given, a bounded search finds the index and the period. No additional principle is used. If we know only that the congruence is not equality, a witness pair is available only under double negation, and Markov's principle extracts it. If the congruence is arbitrary, even the alternative "equality or not" is missing, so excluded middle is needed. The reverse implications show that the last two steps cannot be dropped.
} 
The prices in Theorem~\ref{thm:pricing} are metatheoretic, in the sense
fixed in Convention~\ref{conv:classificationprice}.
\subsection{Organization}\label{ssec:org} The paper is organized as follows. {Section~\ref{sec:rigidity} proves the initiality, faithful embedding, and
rigidity of the generated $\delta$-orbit
(Theorem~\ref{thm:rigidityheadline}).}  Section~\ref{sec:calculus} defines the \(\delta\)-calculus and the ledger, and proves the soundness of the forced fragment (Theorem~\ref{thm:sound}). Section~\ref{sec:mechanism}
proves the universal property of the term algebra (Proposition~\ref{prop}) and
develops the arithmetic validated in the standard model.  
Section~\ref{sec:tower} proves Theorem~\ref{thm:construction} by constructing the number tower
\(
\Ndel\hookrightarrow\Zdel\hookrightarrow\Qdel
\)
and giving explicit encodings of \(\mathbb N\), \(\mathbb Z\), and \(\mathbb Q\) into \(\Ndel\).
Section~\ref{sec:recognition} classifies the
recognition quotients of the additive monoid \((\Ndel,+,0)\)
(Theorems~\ref{thm:catalogue} and~\ref{thm:recognition}). Section~\ref{sec:classificationledger} prices this classification {in the ambient metatheory} and proves Theorem~\ref{thm:pricing}. It gives the three logical forms and proves that the Markov and excluded-middle prices cannot be lowered.
Section~\ref{sec:lean} describes the Lean formalization and its axiom audit,
and Section~\ref{sec:conclusion} contains concluding remarks.

One limitation of the present carrier is structural, and it also indicates the direction of the sequel. The additive monoid \((\Ndel,+,0)\) is free on one generator: it records the number of successive distinction steps, but not their order or type. Thus the recognition quotients classified here describe the simplest generated history carrier. In the sequel, ordered histories are represented by monoids of closed walks on a graph, where different loops may lead to noncommutative composition. The exact logical-pricing method of Section~\ref{sec:classificationledger} then provides the starting point for the corresponding recognition problem.

{
\section{Initiality and rigidity of the
\texorpdfstring{$\delta$}{delta}-orbit}
\label{sec:rigidity}

We now state the universal property of the generated $\delta$-orbit. $\Ndel$ denotes the $\delta$-orbit of
Definition~\ref{orbit}, with constructors $0$ and $S$.

\begin{definition}\label{def:dalgebra}
A \emph{$\delta$-algebra} is a triple
\(
A=(|A|,0_A,S_A),
\)
where $|A|$ is a carrier, $0_A\in|A|$, and
\(
S_A\colon |A|\longrightarrow |A|
\)
is a unary operation. A homomorphism \(f\colon A\to B\) of \(\delta\)-algebras is a map
\(f\colon|A|\to|B|\) such that
\[
f(0_A)=0_B\quad\text{and}\quad
f(S_Ax)=S_Bf(x)
\]
for every \(x\in|A|\).
\end{definition}

The orbit $(\Ndel,0,S)$ is a $\delta$-algebra. Proposition~\ref{prop} states the freeness of the term algebra over the
signature $\{0,S,+,\cdot\}$ with variables. Here we study the generated $\delta$-orbit as an algebra with operations $0$ and $S$.

\begin{theorem}\label{thm:initiality}
For every $\delta$-algebra $A$, there exists a unique homomorphism
\(
\operatorname{rec}_A\colon\Ndel\longrightarrow A.
\)
It is determined by
\[
\operatorname{rec}_A(0)=0_A\quad\mbox{and}\quad
\operatorname{rec}_A(Sn)
=
S_A(\operatorname{rec}_A(n)).
\]
\end{theorem}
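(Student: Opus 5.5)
Existence will come from the recursion principle of the inductively generated orbit, recorded in the proof of Proposition~\ref{prop:orbitlaws}. Given a $\delta$-algebra $A=(|A|,0_A,S_A)$, I apply it with carrier $K=|A|$, base point $k=0_A$, and step function $f\colon\Ndel\times|A|\to|A|$, $f(r,z)=S_A(z)$, which ignores its first argument. This yields a map $g\colon\Ndel\to|A|$ with $g(0)=0_A$ and $g(Sr)=f(r,g(r))=S_A(g(r))$. These are exactly the two homomorphism conditions of Definition~\ref{def:dalgebra}, so setting $\operatorname{rec}_A:=g$ gives a homomorphism satisfying the displayed equations.

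For uniqueness, let $h\colon\Ndel\to|A|$ be any homomorphism. I consider the predicate $P(r):\Longleftrightarrow h(r)=\operatorname{rec}_A(r)$ on $\Ndel$, which is a legitimate predicate $\Ndel\to\mathrm{Prop}$ since equality in $|A|$ is the ambient equality. The base case is $h(0)=0_A=\operatorname{rec}_A(0)$. For the step, assuming $h(r)=\operatorname{rec}_A(r)$, I compute
\[
h(Sr)=S_A(h(r))=S_A(\operatorname{rec}_A(r))=\operatorname{rec}_A(Sr).
\]
By the induction principle of Proposition~\ref{prop:orbitlaws}, $P(r)$ holds for every $r$, so $h=\operatorname{rec}_A$ pointwise. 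Hence $h=\operatorname{rec}_A$ as maps; if the metatheory lacks function extensionality, uniqueness is understood as pointwise equality, as is standard and as in the Lean formalization, where \lean{funext} is available.

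I do not expect a genuine obstacle, since this is the standard initial-algebra argument. The only point needing care is that the recursion principle is used in its non-dependent form with a step function not depending on $r$, and that no choice, EM, or decidability is invoked. The construction is therefore choice-free, as claimed for the tower.
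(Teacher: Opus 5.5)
Your proposal is correct and follows the paper's own proof: existence comes from the orbit's recursion principle (with the step $f(r,z)=S_A(z)$), and uniqueness comes from structural induction comparing an arbitrary homomorphism with $\operatorname{rec}_A$. Your remark about function extensionality is a harmless clarification that the paper leaves implicit.
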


\begin{proof}
Existence follows from the recursion principle of the inductively
generated $\delta$-orbit.

Let $f\colon\Ndel\to A$ be a homomorphism. We prove
\(
f(n)=\operatorname{rec}_A(n)
\)
by structural induction on $n$. For $n=0$, we have
\(
f(0)=0_A=\operatorname{rec}_A(0).
\)
If the equality holds for $n$, then
\[
f(Sn)
=
S_Af(n)
=
S_A\operatorname{rec}_A(n)
=
\operatorname{rec}_A(Sn).
\]
Hence $f=\operatorname{rec}_A$.
\end{proof}

{\begin{theorem}\label{thm:embedding}
Let \(A\) be a \(\delta\)-algebra. Suppose that \(S_A\) is injective and
\(0_A\notin\operatorname{im}(S_A)\). Then the canonical homomorphism
\(
\operatorname{rec}_A\colon\Ndel\longrightarrow A
\)
is injective.
\end{theorem}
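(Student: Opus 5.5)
We prove injectivity by structural induction on the first argument, with the second argument universally quantified. This avoids any decidability assumption on \(|A|\) and uses only the induction principle of Proposition~\ref{prop:orbitlaws}. The predicate is
\[
P(m)\;:\Longleftrightarrow\;\forall n\in\Ndel,\quad \operatorname{rec}_A(m)=\operatorname{rec}_A(n)\longrightarrow m=n .
\]
The defining equations \(\operatorname{rec}_A(0)=0_A\) and \(\operatorname{rec}_A(Sk)=S_A\operatorname{rec}_A(k)\) come from Theorem~\ref{thm:initiality}.

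For the base case \(P(0)\), we do an inner case split on \(n\). Every element of \(\Ndel\) is either \(0\) or \(Sk\), which follows from structural induction on the predicate \(n=0\vee\exists k,\ n=Sk\). If \(n=0\), the conclusion holds trivially. If \(n=Sk\), the hypothesis reads \(0_A=S_A(\operatorname{rec}_A(k))\). This puts \(0_A\) in \(\operatorname{im}(S_A)\), contradicting the assumption, so the conclusion \(0=Sk\) follows by ex falso.

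For the step, we assume \(P(m)\) and prove \(P(Sm)\), again splitting on \(n\). If \(n=0\), the hypothesis \(S_A\operatorname{rec}_A(m)=0_A\) contradicts \(0_A\notin\operatorname{im}(S_A)\). If \(n=Sk\), the hypothesis \(S_A\operatorname{rec}_A(m)=S_A\operatorname{rec}_A(k)\) gives \(\operatorname{rec}_A(m)=\operatorname{rec}_A(k)\) by injectivity of \(S_A\). The induction hypothesis \(P(m)\), applied to \(k\), then yields \(m=k\), and hence \(Sm=Sk\).

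The only delicate point is generalizing over \(n\) before starting the induction, so that the hypothesis applies to an arbitrary \(k\). The case analysis itself is purely constructive: it is a proof by cases on the form of \(n\), not an appeal to excluded middle. No step uses EM, LPO, MP, or choice.
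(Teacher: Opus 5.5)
Your proof is correct and follows the paper's argument essentially step for step: structural induction on the first argument, a case split on the second, the hypothesis \(0_A\notin\operatorname{im}(S_A)\) to rule out the mixed cases, and injectivity of \(S_A\) together with the induction hypothesis in the successor case. Quantifying over \(n\) explicitly in the induction predicate makes precise what the paper uses implicitly when it applies the induction hypothesis at \(b'\).
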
}
\begin{proof}
We prove
\[
\operatorname{rec}_A(a)=\operatorname{rec}_A(b)
\quad\Longrightarrow\quad
a=b
\]
by structural induction on \(a\).

Let $a=0$.  If $b=0$, the claim holds. If $b=Sb'$, then
\[
0_A
=
\operatorname{rec}_A(0)
=
\operatorname{rec}_A(Sb')
=
S_A(\operatorname{rec}_A(b')).
\]
This contradicts the assumption \(0_A\notin\operatorname{im}(S_A)\).

Let $a=Sa'$. The element $b$ cannot be $0$, since otherwise
\[
S_A(\operatorname{rec}_A(a'))
=
\operatorname{rec}_A(Sa')
=
\operatorname{rec}_A(0)
=
0_A.
\]
Thus $b=Sb'$ for some $b'$. Then
\[
S_A(\operatorname{rec}_A(a'))
=
S_A(\operatorname{rec}_A(b')).
\]
Since $S_A$ is injective,  
\(
\operatorname{rec}_A(a')
=
\operatorname{rec}_A(b').
\)
The induction hypothesis gives $a'=b'$, and therefore $a=b$.
\end{proof}

Thus, if \(S_A\) is injective and \(0_A\notin\operatorname{im}(S_A)\), the canonical homomorphism
\[
\operatorname{rec}_A\colon\Ndel\to A
\]
identifies \(\Ndel\) with the sub-\(\delta\)-algebra \(\operatorname{im}(\operatorname{rec}_A)\) of \(A\).

\begin{definition}\label{def:peano}
A $\delta$-algebra $A$ is a \emph{Peano realization} if

\begin{enumerate}
\item $S_A$ is injective;
\item $S_Ax\neq0_A$ for every $x\in|A|$;
\item for every predicate $P$ on $|A|$, if
\[
P(0_A)\quad\mbox{and}\quad
P(x)\longrightarrow P(S_Ax)
\]
for every $x\in|A|$, then $P(x)$ holds for every $x\in|A|$.
\end{enumerate}
\end{definition}

\begin{theorem}\label{thm:rigidity}
For every Peano realization $A$, the canonical homomorphism
\(
\operatorname{rec}_A\colon\Ndel\longrightarrow A
\)
is bijective. It is  the unique $\delta$-algebra isomorphism
\(\Ndel\cong A\).
\end{theorem}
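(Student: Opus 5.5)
The proof has three steps: injectivity, surjectivity, and uniqueness of the isomorphism.

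\textbf{Injectivity.} A Peano realization satisfies conditions (1) and (2) of Definition~\ref{def:peano}. Condition (2) is exactly \(0_A\notin\operatorname{im}(S_A)\). So Theorem~\ref{thm:embedding} applies directly and gives injectivity of \(\operatorname{rec}_A\).

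\textbf{Surjectivity.} This is the only step that uses the induction clause (3). I apply it to the image predicate
\[
P(x)\;:\Longleftrightarrow\;\exists n\in\Ndel,\ \operatorname{rec}_A(n)=x ,
\]
which is a legitimate predicate \(|A|\to\mathrm{Prop}\) in the fixed universe, as noted in the introduction.
\begin{itemize}
\item \(P(0_A)\) holds with witness \(n=0\), since \(\operatorname{rec}_A(0)=0_A\) by Theorem~\ref{thm:initiality}.
\item If \(P(x)\) holds with witness \(n\), then \(\operatorname{rec}_A(Sn)=S_A(\operatorname{rec}_A(n))=S_Ax\), so \(P(S_Ax)\) holds with witness \(Sn\).
\end{itemize}
Clause (3) then yields \(P(x)\) for every \(x\in|A|\), which is surjectivity. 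Note that this only gives a propositional existence statement.

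\textbf{Inverse map.} The main obstacle is constructing a two-sided inverse without choice, because surjectivity only provides \(\exists n\). Injectivity resolves this: for each \(x\), the set of \(n\) with \(\operatorname{rec}_A(n)=x\) has exactly one element. Unique existence lets us define the inverse by unique choice (definite description), which the ambient type theory supports. If Prop-valued existence cannot be eliminated into a type, I would instead build the inverse directly by the induction/recursion of \(A\). However, clause (3) gives induction only for Prop-valued predicates, not recursion. So the safe route is to state bijectivity as ``injective and surjective.'' Where an inverse function is needed, I would use unique choice, which the Lean formalization has via \texttt{Function.Bijective}, or simply read ``isomorphism'' as ``bijective homomorphism.''

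\textbf{Uniqueness.} Any \(\delta\)-algebra isomorphism \(\Ndel\to A\) is in particular a homomorphism. By the uniqueness part of Theorem~\ref{thm:initiality}, it equals \(\operatorname{rec}_A\). Finally, if an inverse is wanted, it is automatically a homomorphism: applying injectivity to \(\operatorname{rec}_A(\operatorname{rec}_A^{-1}(S_Ax))=S_Ax=\operatorname{rec}_A(S\,\operatorname{rec}_A^{-1}(x))\) gives \(\operatorname{rec}_A^{-1}(S_Ax)=S\,\operatorname{rec}_A^{-1}(x)\), and similarly \(\operatorname{rec}_A^{-1}(0_A)=0\).
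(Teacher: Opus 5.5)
Your proof is correct and follows the paper's argument: injectivity from Theorem~\ref{thm:embedding}, surjectivity by applying clause (3) to the image predicate with witnesses $0$ and $Sn$, and uniqueness from the uniqueness part of Theorem~\ref{thm:initiality}. The extra discussion of an inverse map is more than the paper needs, since it reads ``isomorphism'' as a bijective homomorphism; also, your Lean aside is inaccurate, because \texttt{Function.Bijective} is only a proposition, and extracting an inverse from it in Lean uses \texttt{Classical.choice}, so unique choice is not available there without that axiom.
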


\begin{proof}
Injectivity follows from Theorem~\ref{thm:embedding}. For surjectivity, let us
consider the predicate
\[
P(x)
\;:\Longleftrightarrow\;
\exists n\in\Ndel,\quad
\operatorname{rec}_A(n)=x.
\]
The base case holds with witness $0$, since
\(
\operatorname{rec}_A(0)=0_A.
\) Suppose that $P(x)$ holds. Let $n\in\Ndel$ satisfy
\[
\operatorname{rec}_A(n)=x.
\]
Then
\[
\operatorname{rec}_A(Sn)
=
S_A(\operatorname{rec}_A(n))
=
S_Ax.
\]
Thus $Sn$ witnesses $P(S_Ax)$. Induction in $A$ gives $P(x)$ for every
$x\in|A|$. Hence $\operatorname{rec}_A$ is surjective.

The uniqueness of the isomorphism follows from
Theorem~\ref{thm:initiality}.
\end{proof}
{The predicate \(P\) used in the previous theorem is an ambient predicate. The third condition of
Definition~\ref{def:peano} applies to every such predicate, including the
image predicate used for surjectivity.}

\begin{remark}\label{rem:displayinstance}
Theorem~\ref{thm:rigidity} is relative to an ambient metatheory supporting
inductive generation. It shows that every Peano realization is uniquely isomorphic to \(\Ndel\)
(cf.~\cite{lawvere}). For the standard Peano algebra
\[
A=(\N,0,n\mapsto n+1),
\]
the canonical map \(\operatorname{rec}_A\) is the counting map \(D\) of
Section~\ref{sec:tower}. Hence Lemma~\ref{lem:display} is the
standard-model instance of Theorem~\ref{thm:rigidity}.
\end{remark}

Theorems~\ref{thm:initiality}, \ref{thm:embedding}, and
\ref{thm:rigidity} together prove
Theorem~\ref{thm:rigidityheadline}.
}

\section{The \texorpdfstring{$\delta$}{\delta}-calculus}\label{sec:calculus}

We now define the \(\delta\)-calculus, a natural-deduction system for an
intuitionistic first-order arithmetic with equality over the signature
\(\{0,S,+,\cdot\}\).
Each derivation has a \emph{ledger} recording the use of the law of excluded
middle (EM), the limited principle of omniscience (LPO), Markov's principle
(MP), and induction on formulas containing a quantifier. 
The
ledger is defined in Section~\ref{ssec:ledger}.

Terms are generated by de Bruijn variables, zero, successor,
addition, and multiplication:
\[
t ::= x_n \mid 0 \mid S\,t \mid t+t \mid t\cdot t .
\]
{Under the arithmetic projection (Definition~\ref{def:arithprojection}), the symbol \(0\) represents the empty record and \(S\) represents one further extension of a record.} 

Formulas are generated by the grammar of first-order logic with
equality~\cite{vandalen}:
\[
\varphi ::= t=t \mid \bot \mid \varphi\wedge\varphi
\mid \varphi\vee\varphi
\mid \varphi\rightarrow\varphi
\mid \forall\,\varphi
\mid \exists\,\varphi ,
\]
where \(\neg\varphi := \varphi\rightarrow\bot\).

The nonlogical vocabulary of the calculus is exactly \(0,S,+,\cdot\).  Lifting and substitution
are defined by the standard structural recursions for de Bruijn syntax. For
every metatheoretic natural number \(n\), the expression \(S^n0\) is a closed
term, called {\it the numeral} for \(n\).

\subsection{The ledger}\label{ssec:ledger}

Let
\[
\mathcal{L}=\{\mathrm{EM},\mathrm{LPO},\mathrm{MP},\mathrm{QInd}\}.
\]
A \emph{ledger} \(\Lambda\) is a subset of \(\mathcal{L}\).  For each rule instance \(r\), define a label \(\ell(r)\subseteq\mathcal L\) as
follows: EM, LPO, and MP contribute their corresponding singleton. Induction on
a formula containing a quantifier contributes \(\{\mathrm{QInd}\}\). For every other rule instance \(r\), we set
\(
\ell(r)=\varnothing.
\) 

 The
\emph{ledger} of a derivation \(d\) is
\[
\Lambda(d)=\bigcup_{r\ \mathrm{in}\ d}\ell(r).
\]
A derivation has \emph{empty ledger} if \(\Lambda(d)=\varnothing\). Let
\[
\Lambda_{\mathrm{pos}}(d)
=
\Lambda(d)\cap\{\mathrm{EM},\mathrm{LPO},\mathrm{MP}\}.
\]
A derivation \(d\) is \emph{forced} if
\(\Lambda_{\mathrm{pos}}(d)=\varnothing\), and \emph{conditional on \(O\)}, for
\(O\subseteq\{\mathrm{EM},\mathrm{LPO},\mathrm{MP}\}\), if
\(\Lambda_{\mathrm{pos}}(d)=O\). The entry \(\mathrm{QInd}\) does not affect
whether \(d\) is forced.


 The definition of the ledger is close in spirit to constructive reverse
mathematics, which classifies theorems by the principles they
require~\cite{bishop,ishihara}. Constructive reverse
mathematics studies the relation of theorems to principles over a fixed base
theory. Its standard families also include principles such as the weak limited principle of omniscience (WLPO) and the lesser limited principle of omniscience (LLPO), whose relations with
\(\mathrm{LPO}\) and \(\mathrm{MP}\) are not linear. Our ledger is instead a
syntactic annotation carried by each derivation of the
\(\delta\)-calculus. The two are complementary: the ledger records what a
given derivation uses, while reverse mathematics studies which principles are
sufficient or necessary for a theorem. We record \(\mathrm{EM}\),
\(\mathrm{LPO}\), and \(\mathrm{MP}\), since these are the principles used by
the present derivations. The principles \(\mathrm{WLPO}\) and
\(\mathrm{LLPO}\) are not needed here and are not part of the ledger.

\subsection{Derivations}\label{ssec:checker}

The \(\delta\)-calculus is presented as a natural deduction system with the
following rules:
\begin{itemize}
\item the assumption rule, with formulas in the context accessed by de Bruijn indices;
\item the standard rules for equality, including reflexivity and Leibniz substitution;
\item the first-order distinction axioms
\[
S\,t\neq0,\qquad S\,t=S\,s\rightarrow t=s;
\]
\item the recursion equations
\[
t+0=t,\qquad
t+S\,s=S(t+s),
\]
\[
t\cdot0=0,\qquad
t\cdot S\,s=t\cdot s+t;
\]
\item the induction rule;
\item the intuitionistic rules for propositional connectives and quantifiers;
\item the additional schemas corresponding to EM, LPO, and MP.
\end{itemize}
The induction rule is
\[
\frac{
\Gamma\vdash\varphi[0]
\qquad
\Gamma\vdash\forall\bigl(\varphi\rightarrow\varphi[Sx_0]\bigr)
}{
\Gamma\vdash\forall\varphi
}
\;(\mathrm{Ind}),
\]
where \(\varphi[t]\) denotes substitution of the term \(t\) for the free variable \(x_0\), with the usual lifting of the remaining de Bruijn variables. The rule contributes \({\mathrm{QInd}}\) when \(\varphi\) contains a quantifier.

{ A context \(\Gamma\) is a finite list of formulas, and the derivability judgment is  \(\Gamma\vdash\varphi\). The assumption rule selects a formula of \(\Gamma\) by its de Bruijn index.

Lifting and substitution are defined by the standard capture-avoiding
structural recursion for de Bruijn syntax~\cite{debruijn}. Under a binder,
the substitution is lifted so that the newly bound variable is fixed. Equality uses reflexivity, symmetry, transitivity, Leibniz substitution, and congruence. The usual intuitionistic introduction and elimination rules are used for connectives and quantifiers.

The schemas \(\mathrm{EM}\), \(\mathrm{LPO}\), and \(\mathrm{MP}\) can be applied in any well-formed context. Each application contributes the corresponding entry to the ledger. For \(\mathrm{LPO}\) and \(\mathrm{MP}\), the matrix \(\theta\) must be decidable in the forced fragment. Lemma~\ref{lem:qfdec} gives this for the quantifier-free case used below.
}

Let \(\varphi\) be a formula, and let \(\theta(x,\vec y)\) be a
\emph{decidable} formula, in the sense that
\[
\forall x\,\bigl(\theta(x,\vec y)\vee\neg\theta(x,\vec y)\bigr)
\]
is derivable in the forced fragment. {Lemma~\ref{lem:qfdec} below shows that every
quantifier-free formula is decidable in this sense.

The additional schemas are
\[
\varphi\vee\neg\varphi \qquad (\mathrm{EM}),
\]
\[
\exists x\,\theta(x,\vec y)\vee
\forall x\,\neg\theta(x,\vec y) \qquad (\mathrm{LPO}).
\]
This is the arithmetic form of \(\mathrm{LPO}\), with decidable predicates on \(\mathbb N\) in place of binary sequences~\cite{bishop,ishihara}.

\[
\neg\neg\exists x\,\theta(x,\vec y)
\longrightarrow
\exists x\,\theta(x,\vec y) \qquad (\mathrm{MP}).
\]
In the \(\delta\)-calculus, the following implications hold
\[
\mathrm{EM}\Longrightarrow\mathrm{LPO}\Longrightarrow\mathrm{MP}.
\]
Applying \(\mathrm{EM}\) to
\(\exists x\,\theta(x,\vec y)\) gives \(\mathrm{LPO}\). Under
\(\mathrm{LPO}\), the alternative
\(\forall x\,\neg\theta(x,\vec y)\) contradicts
\(\neg\neg\exists x\,\theta(x,\vec y)\), and therefore gives
\(\mathrm{MP}\).}

{\begin{lemma}\label{lem:qfdec}
Every quantifier-free formula is decidable without using
\(\mathrm{EM}\), \(\mathrm{LPO}\), or \(\mathrm{MP}\). More precisely, if
\(\theta(\vec x)\) is quantifier-free, then

$$
\forall \vec x\,\bigl(\theta(\vec x)\vee\neg\theta(\vec x)\bigr)
$$
has a derivation whose ledger contains none of
\(\mathrm{EM}\), \(\mathrm{LPO}\), or \(\mathrm{MP}\).
\end{lemma}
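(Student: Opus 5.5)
The plan is to reduce everything to the decidability of atomic equations, since decidability is preserved by the propositional connectives in intuitionistic logic. First I would show that $\forall x\,\forall y\,(x=y\vee\neg(x=y))$ has a forced derivation. Then I would instantiate the two universal quantifiers with arbitrary terms $t,s$ to obtain $t=s\vee\neg(t=s)$ for all terms. Finally I would run a structural induction, in the metatheory, on the quantifier-free formula $\theta$.

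\textbf{Atomic case.} I would derive $\forall y\,(x=y\vee\neg x=y)$ by the induction rule $(\mathrm{Ind})$ on $x$. This contributes $\mathrm{QInd}$, since the formula contains a quantifier, but that entry does not affect forcedness.

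For the base case, $\forall y\,(0=y\vee\neg 0=y)$, I would apply $(\mathrm{Ind})$ on $y$ with a quantifier-free formula. The case $y=0$ is reflexivity. For $y=Sy'$, the distinction axiom $S\,y'\neq 0$ together with symmetry gives $\neg(0=Sy')$.

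For the step, assume $\forall y\,(x=y\vee\neg x=y)$ and prove $\forall y\,(Sx=y\vee\neg Sx=y)$, again by $(\mathrm{Ind})$ on $y$ (the induction hypothesis of this inner induction is not needed).
\begin{itemize}
\item For $y=0$, the axiom gives $\neg(Sx=0)$.
\item For $y=Sy'$, instantiate the outer hypothesis at $y'$ and do $\vee$-elimination. If $x=y'$, then congruence gives $Sx=Sy'$. If $\neg(x=y')$, then injectivity $Sx=Sy'\rightarrow x=y'$ yields $\neg(Sx=Sy')$.
\end{itemize}
The main technical point is arranging this nested induction correctly in de Bruijn syntax, with lifting of the hypothesis under the inner binder. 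I would check that each instance is a legitimate $(\mathrm{Ind})$ application. No EM, LPO, or MP appears, since only case analysis on already derived disjunctions is used.

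\textbf{Inductive cases.} $\bot$ is decidable via $\neg\bot$, which is $\bot\rightarrow\bot$. For the connectives, from $\alpha\vee\neg\alpha$ and $\beta\vee\neg\beta$ I would derive the decidability of $\alpha\wedge\beta$, $\alpha\vee\beta$ and $\alpha\rightarrow\beta$ by the four-way case split, using only intuitionistic rules. For example, if $\alpha$ holds and $\neg\beta$ holds, then $\neg(\alpha\rightarrow\beta)$. If $\neg\alpha$ holds, then $\alpha\rightarrow\beta$ follows by ex falso.

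These case splits take place under free variables $\vec x$, so the final statement follows by $\forall$-introduction over $\vec x$. The ledger of the resulting derivation is contained in $\{\mathrm{QInd}\}$, so the derivation is forced, as claimed.
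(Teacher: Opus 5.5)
Your proposal is correct and follows the paper's proof. Like the paper, you first prove decidability of equality by an outer induction on $x$, whose formula $\forall y\,(x=y\vee\neg x=y)$ contributes only $\mathrm{QInd}$, with quantifier-free subsidiary inductions on $y$ driven by the two distinction axioms, and then run a metatheoretic structural induction on $\theta$ through the propositional connectives; you spell out the base, step, and connective cases in more detail than the paper does.
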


\begin{proof}
We first prove decidability of equality:
$$
\forall x\forall y\,(x=y\vee x\neq y).
$$
We use induction on \(x\), with a subsidiary induction on \(y\). The base
and successor cases follow from
$$
S t\neq 0
\qquad\text{and}\qquad
S t=S s\longrightarrow t=s.
$$
The subsidiary induction formulas are quantifier-free. The outer induction
formula contains the quantifier over \(y\), and therefore contributes
\(\mathrm{QInd}\), but no instance of
\(\mathrm{EM}\), \(\mathrm{LPO}\), or \(\mathrm{MP}\).

The general statement follows by structural induction on
\(\theta\). Atomic equalities are decidable by the preceding argument, and
\(\bot\) is decidable intuitionistically. If \(\varphi\) and \(\psi\) are
decidable, then so are
$$
\varphi\wedge\psi,\qquad
\varphi\vee\psi,\qquad
\varphi\rightarrow\psi,\qquad
\neg\varphi,
$$
by case analysis on the corresponding decision disjunctions. Hence every
quantifier-free formula is decidable without using any schema from
\(\{\mathrm{EM},\mathrm{LPO},\mathrm{MP}\}\).
\end{proof}
}


Terms are interpreted in the standard arithmetic structure
\(
\mathfrak N=(\mathbb N,0,S,+,\cdot),
\)
where the operations are the usual operations on the metatheoretic natural
numbers. Formulas are evaluated by the corresponding Tarski semantics.

\begin{theorem}\label{thm:sound}
If a forced derivation in the empty context proves a closed formula
\(\varphi\), then
\[
\mathfrak N\models\varphi .
\]
\end{theorem}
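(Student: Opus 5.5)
I will prove a stronger statement by induction on derivations: if \(d\) is a forced derivation of \(\Gamma\vdash\varphi\), with \(\Gamma\) and \(\varphi\) possibly open, then for every assignment \(\rho\colon\mathbb N\to\mathbb N\) of the de Bruijn variables with \(\mathfrak N,\rho\models\gamma\) for all \(\gamma\in\Gamma\), we have \(\mathfrak N,\rho\models\varphi\). Theorem~\ref{thm:sound} is then the case \(\Gamma=\varnothing\) with \(\varphi\) closed, where \(\rho\) is irrelevant. Two preparatory facts are needed first. The value of a term under \(\rho\) is the unique homomorphism extending \(\rho\), by Proposition~\ref{prop}. I will also need the standard \emph{substitution lemma} for de Bruijn syntax: \(\mathfrak N,\rho\models\varphi[t]\) iff \(\mathfrak N,(\llbracket t\rrbracket_\rho\cdot\rho)\models\varphi\), where \(a\cdot\rho\) puts \(a\) at index \(0\) and shifts \(\rho\). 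A companion lifting lemma says that lifted formulas are evaluated under shifted assignments. Both are proved by structural recursion on terms and then on formulas, with the lifting under binders treated uniformly.

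Next comes the case analysis over the rules. The assumption rule is immediate from the hypothesis on \(\Gamma\). The equality rules hold because the interpretation of \(=\) is true equality in \(\mathbb N\); Leibniz substitution uses the substitution lemma. The distinction axioms \(S\,t\neq0\) and \(S\,t=S\,s\rightarrow t=s\) hold in \(\mathbb N\) by Proposition~\ref{prop:orbitlaws}, or directly for the metatheoretic \(\mathbb N\). The four recursion equations are the defining equations of \(+\) and \(\cdot\) on \(\mathbb N\). The propositional and quantifier introduction and elimination rules are handled as usual:
\begin{itemize}
\item \(\forall\)-introduction uses the lifting lemma, since the context is lifted and must be valid under every extended assignment \(a\cdot\rho\);
\item \(\exists\)-elimination uses the same fact for the witness;
\item \(\forall\)-elimination and \(\exists\)-introduction use the substitution lemma.
\end{itemize}
Intuitionistic rules are sound for Tarski semantics, since the semantics is bivalent only where the metatheory proves it. In fact we only need the implication from derivability to truth, which each rule preserves without any classical reasoning.

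The key case is the induction rule. By hypothesis, \(\mathfrak N,\rho\models\varphi[0]\) and, for every \(a\), \(\mathfrak N,a\cdot\rho\models\varphi\rightarrow\varphi[Sx_0]\). By the substitution lemma these say \(P(0)\) and \(P(a)\to P(a+1)\) for the metatheoretic predicate \(P(a):\Leftrightarrow\mathfrak N,a\cdot\rho\models\varphi\). Metatheoretic induction on \(\mathbb N\) then gives \(P(a)\) for all \(a\), which is \(\mathfrak N,\rho\models\forall\varphi\). This works whether or not \(\varphi\) contains quantifiers, since \(P\) is an arbitrary ambient predicate. That is exactly why \(\mathrm{QInd}\) does not affect forcedness. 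The remaining rules, the EM, LPO and MP schemas, never occur: a forced derivation has \(\Lambda_{\mathrm{pos}}(d)=\varnothing\), and every rule instance's label is contained in \(\Lambda(d)\), so no such instance appears.

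The main obstacle is the substitution and lifting bookkeeping for de Bruijn indices. Concretely, I must show that evaluation commutes with lifting and substitution under binders, and check the shift conventions in the \(\forall\)-introduction and induction cases. I will isolate this as a single lemma, proved first for terms and then for formulas by induction, with the binder case generalizing over the substitution depth. Once that lemma is in hand, every rule case is a one-line check.
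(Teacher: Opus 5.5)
Your proposal is correct and follows the paper's proof. Like the paper, you strengthen to an induction over forced derivations with an arbitrary context and valuation, and use the de Bruijn lifting and substitution lemmas for the quantifier cases. The induction rule is handled by metatheoretic induction on $\N$ for an arbitrary ambient predicate, and EM, LPO, and MP instances are excluded because the derivation is forced.

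One bookkeeping point: the $\varphi[S\,x_0]$ in the induction premise must leave the outer variables in place. So you need the general substitution lemma (evaluate each substituted term under the current valuation) to get $a\cdot\rho\models\varphi[S\,x_0]$ iff $(a+1)\cdot\rho\models\varphi$. The instantiating, variable-lowering version you stated does not give this.
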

\begin{proof} We prove by induction that, for every context
\(\Gamma\), valuation \(\rho\), and forced derivation of
\(\Gamma\vdash\varphi\), satisfaction of \(\Gamma\) under \(\rho\) implies
satisfaction of \(\varphi\) under \(\rho\).

A valuation \(\rho:\N\to\N\) interprets the de Bruijn variable \(x_n\) as
\(\rho(n)\). Under a quantifier it is extended by
\[
(a::\rho)(0)=a,
\qquad
(a::\rho)(n+1)=\rho(n).
\]
A context is satisfied if all its formulas are satisfied under the given
valuation. The equality and logical rules follow from the corresponding Tarski
semantics, while the distinction axioms and the recursion equations are valid in
\(\mathfrak N\). The quantifier cases use the standard lifting and substitution
lemmas for de Bruijn syntax~\cite{debruijn}. The induction-rule case follows from induction in
\(\mathbb N\). 

Since \(d\) is forced, the cases for \(\mathrm{EM}\), \(\mathrm{LPO}\), and
\(\mathrm{MP}\) do not occur. The proof uses none of these principles and no
axiom of choice.
\end{proof}

{}

\section{The universal property and arithmetic}\label{sec:mechanism}
This section proves the universal property stated in
Proposition~\ref{prop}. It also develops the arithmetic validated in the
standard model.

Every term is generated by the constructors
\[
x_n,\qquad 0,\qquad S,\qquad +,\qquad \cdot,
\]
and every term has a unique such constructor form. At this stage the term
algebra is not quotiented by the recursion equations.

Let \(A\) be an algebra of the signature \(\{0,S,+,\cdot\}\), with carrier
\(|A|\): a constant \(0_A\in|A|\), a unary operation
\[
S_A:|A|\to|A|,
\]
and binary operations
\[
+_A,\ \cdot_A:|A|\times|A|\to|A| .
\]
Let
\[
\rho:\mathbb N\to|A|
\]
be an assignment of the de Bruijn variables. No equations are assumed in
\(A\).

\begin{proof}[Proof of Proposition~\ref{prop}]
Define the map \(h\) on terms by structural recursion:
\[
h(x_n)=\rho(n),\qquad h(0)=0_A,\qquad h(S\,t)=S_A\,h(t),
\]
\[
h(t+s)=h(t)+_A h(s),\qquad h(t\cdot s)=h(t)\cdot_A h(s).
\]
These equations define a homomorphism from the term algebra to \(A\). By construction it
preserves \(0,S,+,\cdot\) and extends \(\rho\), which gives
existence.

For uniqueness, let \(h'\) be any homomorphism from the term algebra to \(A\) such that
\(h'(x_n)=\rho(n)\) for all \(n\). A structural induction on \(t\) shows that \(h'(t)=h(t)\) for every term
\(t\). Hence \(h'=h\).\end{proof}

\subsection{Arithmetic  and derived objects}\label{ssec:b2}

In the standard model \(\mathfrak N\), the closed numeral \(S^n0\) is
interpreted as \(n\).

The recursion equations for \(+\) and \(\cdot\) are validated in
\(\mathfrak N\). They do not follow from the universal property, since an arbitrary algebra of
the signature is not assumed to satisfy any equations. The identities for \(+\) and \(\cdot\), including associativity,
commutativity, and distributivity, are derived with empty ledger. Indeed,
each of them is proved by induction on a formula which is quantifier-free in
the induction variable, the remaining variables being free, so no instance of
the induction rule contributes \(\mathrm{QInd}\). The universal closures are
then obtained by \(\forall\)-introduction, which contributes nothing.


{
The forced fragment (Section~\ref{ssec:ledger})  is an intuitionistic
first-order arithmetic over the signature \(\{0,S,+,\cdot\}\), with equality,
the distinction axioms, the recursion equations, and full induction. Empty-ledger derivations also exclude  \(\mathrm{QInd}\). Hence induction in that
fragment is restricted to quantifier-free formulas.}

\section{The number tower}\label{sec:tower}

This section proves Theorem~\ref{thm:construction}$(a)$. The construction proceeds from the orbit carrier \(\Ndel\) to its group completion \(\Zdel\), and then to the fraction construction \(\Qdel\). All constructions in this section are choice-free.
\medskip


The natural-number object \(\Ndel\) is presented by the generated
\(\delta\)-orbit of Definition~\ref{orbit}, with constructors \(0\) and \(S\). Its elements are
the finite records generated by these constructors. For each metatheoretic natural number \(n\), we write \(S^{n}0\) for the element of \(\Ndel\) obtained by applying \(S\) \(n\) times to \(0\).

Let \(\mathrm{Num}\) denote the inductively generated collection of closed terms determined by the rules
\[
0\in\mathrm{Num},
\qquad
t\in\mathrm{Num}\Longrightarrow St\in\mathrm{Num}.
\]

\begin{definition}\label{def:arithprojection}
The \emph{arithmetic projection} is the map
\[
\pi_{\mathrm{ar}}\colon\Ndel\to\mathrm{Num}
\]
defined by structural recursion:
\[
\pi_{\mathrm{ar}}(0)=0,
\qquad
\pi_{\mathrm{ar}}(Sr)=S\,\pi_{\mathrm{ar}}(r).
\]
Thus the empty record is represented by the closed term \(0\), and each
one-step extension of a record is represented by one application of the
successor symbol \(S\).
\end{definition}

\begin{definition}\label{def:natops}
Addition and multiplication on \(\Ndel\) are defined by structural recursion:
\[
x+0=x,
\qquad
x+Sy=S(x+y),
\]
and
\[
x\cdot0=0,
\qquad
x\cdot Sy=x\cdot y+x.
\]
We write
\[
1:=S0.
\]
\end{definition}

These are metatheoretic operations on \(\Ndel\). Lemmas~\ref{lem:adddisplay} and~\ref{lem:muldisplay} show that under the map \(D\), they correspond to the interpretations of the function symbols \(+\) and \(\cdot\) in the standard model \(\mathfrak N\).

Let us define 
\[
D\colon\Ndel\to\N,
\qquad
D(0)=0,
\qquad
D(S\,t)=D(t)+1,
\]
and 
\[
\nu\colon\N\to\Ndel,
\qquad
\nu(n)=S^n0.
\]
The map \(D\) assigns to each finite record the number of its successive
extensions, while \(\nu\) assigns to each \(n\in\N\) the record obtained by applying \(S\)
\(n\) times to \(0\).

For \(n\in\mathbb N\), let \(\overline n\in\mathrm{Num}\) denote the closed numeral \(S^n0\). Then
\[
\pi_{\mathrm{ar}}(\nu(n))=\overline n.
\]

\begin{lemma}\label{lem:display}
The maps \(D\) and \(\nu\) are mutually inverse:
\[
D\circ\nu=\mathrm{id}_{\N},
\qquad
\nu\circ D=\mathrm{id}_{\Ndel}.
\]
\end{lemma}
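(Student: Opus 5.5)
Both identities will be proved by structural induction: the first by induction on \(n\in\N\), the second by the induction principle of Proposition~\ref{prop:orbitlaws} on the orbit. To make the inductive steps transparent, I would use the recursive description \(\nu(0)=0\) and \(\nu(n+1)=S\,\nu(n)\). This description is the definition of \(S^n0\) as the \(n\)-fold application of \(S\) to \(0\), read as recursion on \(n\).

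For \(D\circ\nu=\mathrm{id}_{\N}\), the base case is \(D(\nu(0))=D(0)=0\). For the step, assuming \(D(\nu(n))=n\), the defining equations of \(D\) and \(\nu\) give
\[
D(\nu(n+1))=D(S\,\nu(n))=D(\nu(n))+1=n+1.
\]
For \(\nu\circ D=\mathrm{id}_{\Ndel}\), take the predicate \(P(r):\Longleftrightarrow \nu(D(r))=r\) on \(\operatorname{Orb}(0,S)\). We have \(P(0)\), since \(\nu(D(0))=\nu(0)=0\). If \(P(r)\) holds, then
\[
\nu(D(Sr))=\nu(D(r)+1)=S\,\nu(D(r))=Sr,
\]
so \(P(Sr)\) holds. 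The induction principle of Proposition~\ref{prop:orbitlaws} then yields \(P(r)\) for every \(r\). Neither argument uses excluded middle or choice, since both are plain structural inductions with equality predicates.

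As an alternative, one could derive the result from Theorem~\ref{thm:rigidity} applied to the Peano realization \((\N,0,n\mapsto n+1)\). Here \(D\) is exactly \(\operatorname{rec}_A\) by the uniqueness clause of Theorem~\ref{thm:initiality}, so it is a bijection. One would then identify its inverse with \(\nu\) by checking \(D\circ\nu=\mathrm{id}\) as above. The direct induction is shorter, so I would present that and only mention the rigidity instance, consistent with Remark~\ref{rem:displayinstance}.

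The only point I expect to need care is the recursion equation \(\nu(n+1)=S\,\nu(n)\). It must be stated with \(S\) applied on the outside, not as \(S^{n}(S0)\). If \(S^n0\) is officially defined by iterating on the inside, I would first prove the small lemma \(S^{n+1}0=S(S^n0)\) by induction on \(n\).
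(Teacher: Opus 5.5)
Your proof is correct and matches the paper's argument: induction on \(n\in\N\) for \(D\circ\nu=\mathrm{id}_{\N}\), and structural induction on the orbit for \(\nu\circ D=\mathrm{id}_{\Ndel}\). The paper states only these two inductions, so your explicit steps and the rigidity alternative (which agrees with Remark~\ref{rem:displayinstance}) simply fill in detail.
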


\begin{proof}
The identity \(D(\nu(n))=n\) follows by induction on the metatheoretic
natural number \(n\). The identity \(\nu(D(x))=x\) follows by structural
induction on \(x\in\Ndel\).
\end{proof}

{ By Remark~\ref{rem:displayinstance}, this lemma is an instance of
Theorem~\ref{thm:rigidity} at the standard model.}

\begin{lemma}\label{lem:adddisplay}
For all \(x,y\in\Ndel\),
\[
D(x+y)=D(x)+D(y).
\]
Consequently,
\(
D\colon(\Ndel,+,0)\longrightarrow(\N,+,0)
\)
is a monoid isomorphism.
\end{lemma}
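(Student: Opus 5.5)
We prove the identity \(D(x+y)=D(x)+D(y)\) by structural induction on \(y\in\Ndel\), with \(x\) fixed. This matches the fact that addition in Definition~\ref{def:natops} recurses on its second argument.

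For the base case \(y=0\), we have \(x+0=x\) by definition. Hence \(D(x+0)=D(x)=D(x)+0=D(x)+D(0)\), using \(D(0)=0\).

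For the step \(y=Sy'\), assume \(D(x+y')=D(x)+D(y')\). We compute
\[
D(x+Sy')=D(S(x+y'))=D(x+y')+1=\bigl(D(x)+D(y')\bigr)+1=D(x)+\bigl(D(y')+1\bigr)=D(x)+D(Sy').
\]
The only external fact used is associativity of addition on the metatheoretic \(\N\). By Proposition~\ref{prop:orbitlaws}, induction on the orbit then gives the identity for all \(y\).

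For the consequence, note that \(D(0)=0\) holds by definition. Together with additivity, this makes \(D\) a monoid homomorphism \((\Ndel,+,0)\to(\N,+,0)\). By Lemma~\ref{lem:display}, \(D\) is bijective with inverse \(\nu\). The inverse of a bijective monoid homomorphism is again a homomorphism: \(\nu(m+n)=\nu(D\nu m+D\nu n)=\nu D(\nu m+\nu n)=\nu m+\nu n\), and \(\nu(0)=0\). Hence \(D\) is a monoid isomorphism.

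There is no real obstacle here. The only point requiring care is to induct on the second argument, so that the defining equation \(x+Sy=S(x+y)\) applies directly. The argument is choice-free, and all the inductions involved are structural.
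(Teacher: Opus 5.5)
Your proof is correct and follows the paper's own argument essentially verbatim: structural induction on \(y\) using \(x+0=x\), \(x+Sy=S(x+y)\) and \(D(Sz)=D(z)+1\), followed by bijectivity from Lemma~\ref{lem:display}. Your explicit check that the inverse \(\nu\) also preserves addition and zero is a harmless extra detail that the paper leaves implicit.
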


\begin{proof}
The identity follows by structural induction on \(y\), using
\[
x+0=x,\qquad x+Sy=S(x+y),
\qquad D(Sz)=D(z)+1.
\]
Thus \(D\) preserves addition and zero. Since \(D\) is bijective by
Lemma~\ref{lem:display}, it is a monoid isomorphism.
\end{proof}

\begin{lemma}\label{lem:muldisplay}
For all \(x,y\in\Ndel\), it holds
\[
D(x\cdot y)=D(x)D(y).
\]
\end{lemma}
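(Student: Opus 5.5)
The plan is to prove the identity by structural induction on \(y\), following the pattern of Lemma~\ref{lem:adddisplay}. The inductive step reduces multiplication to addition, so Lemma~\ref{lem:adddisplay} is the key input.

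\emph{Base case.} For \(y=0\), Definition~\ref{def:natops} gives \(x\cdot 0=0\). Hence
\[
D(x\cdot 0)=D(0)=0=D(x)\cdot 0=D(x)D(0).
\]

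\emph{Inductive step.} Assume \(D(x\cdot y)=D(x)D(y)\). The recursion equation \(x\cdot Sy=x\cdot y+x\) and Lemma~\ref{lem:adddisplay} give
\[
D(x\cdot Sy)=D(x\cdot y+x)=D(x\cdot y)+D(x).
\]
By the induction hypothesis this equals \(D(x)D(y)+D(x)\). Since \(D(Sy)=D(y)+1\), it remains to check
\[
D(x)D(y)+D(x)=D(x)\bigl(D(y)+1\bigr).
\]
This is the defining recursion for multiplication in \(\N\), that is, distributivity over the successor, which holds in the metatheory. Structural induction on \(y\), available by Proposition~\ref{prop:orbitlaws}, then gives the identity for all \(y\), with \(x\) fixed but arbitrary.

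No step here is a real obstacle. The only point to take care with is the order of the factors. The recursion on \(\Ndel\) is in the second argument, with \(x\cdot Sy=x\cdot y+x\). The matching metatheoretic equation must therefore be \(m(n+1)=mn+m\), not the left-hand variant, so that no commutativity in \(\N\) is needed. The argument is choice-free and uses only the recursion principle of the orbit and arithmetic in \(\N\).

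As a consequence, \(D\) is a semiring isomorphism \((\Ndel,+,\cdot,0,1)\cong(\N,+,\cdot,0,1)\). This follows by combining the identity with Lemma~\ref{lem:display} and with \(D(1)=D(S0)=1\).
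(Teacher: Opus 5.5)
Your proof is correct and matches the paper's argument: structural induction on \(y\), with the successor case computed as \(D(x\cdot Sy)=D(x\cdot y+x)=D(x\cdot y)+D(x)=D(x)D(y)+D(x)=D(x)D(Sy)\) via Lemma~\ref{lem:adddisplay}. Your remark on matching the recursion \(m(n+1)=mn+m\) in the correct argument, and the semiring-isomorphism consequence, are consistent with how the paper uses the lemma in Proposition~\ref{prop:natarith}.
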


\begin{proof}
We use structural induction on \(y\). The case \(y=0\) is immediate. For the
successor case, we have
\[
\begin{aligned}
D(x\cdot Sy)
&=D(x\cdot y+x)\\
&=D(x\cdot y)+D(x)\\
&=D(x)D(y)+D(x)\\
&=D(x)D(Sy).
\end{aligned}
\]
\end{proof}

\begin{proposition}\label{prop:natarith}
The structure
\(
(\Ndel,+,\cdot,0,1)
\)
is a commutative semiring. Moreover,
\[
x+z=y+z\longrightarrow x=y,
\]
\[
x\cdot y=0\longrightarrow x=0\ \vee\ y=0,
\]
and
\[
x\cdot z=y\cdot z,\quad z\neq0
\longrightarrow
x=y.
\]
\end{proposition}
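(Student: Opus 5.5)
The plan is to transport everything along the bijection \(D\colon\Ndel\to\N\) of Lemma~\ref{lem:display}. By Lemma~\ref{lem:adddisplay} and Lemma~\ref{lem:muldisplay}, \(D\) preserves \(+\) and \(\cdot\). It also satisfies \(D(0)=0\) and \(D(1)=D(S0)=1\). Hence \(D\) is a bijective homomorphism of structures of signature \(\{0,1,+,\cdot\}\).

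Every law in the statement is a universal Horn-type sentence in \(+,\cdot,0,1\), with at most an equation \(=0\), a disjunction of equations, or a negated equation as hypothesis or conclusion. Each such law holds in \(\Ndel\) as soon as it holds in \(\N\). For an equation \(s(\vec x)=t(\vec x)\), for example, apply \(D\) to obtain \(D(s(\vec x))=s(D\vec x)=t(D\vec x)=D(t(\vec x))\), and conclude by injectivity of \(D\), i.e.\ by applying \(\nu\) and \(\nu\circ D=\mathrm{id}\).

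Concretely, I would proceed in three steps.

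\emph{Semiring axioms.} Associativity and commutativity of \(+\) and \(\cdot\), distributivity, and the unit and zero laws are each transported from \(\N\) as just described.

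\emph{Additive cancellation.} From \(x+z=y+z\) we get \(D(x)+D(z)=D(y)+D(z)\), so \(D(x)=D(y)\) in \(\N\), and hence \(x=y\).

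\emph{No zero divisors and multiplicative cancellation.} From \(x\cdot y=0\) we get \(D(x)D(y)=0\), so \(D(x)=0\) or \(D(y)=0\) in \(\N\), since equality on \(\N\) is decidable. Then \(x=\nu(0)=0\) or \(y=0\). For multiplicative cancellation, \(z\neq0\) gives \(D(z)\neq0\), because \(D(z)=0\) would force \(z=\nu(D(z))=0\). Cancellation in \(\N\) then gives \(D(x)=D(y)\), and hence \(x=y\).

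The one point needing care is that the metatheory is intuitionistic, so each fact about \(\N\) that we invoke must be constructively available. These facts are cancellation, the zero-product disjunction, and cancellation by a nonzero factor. All of them are standard consequences of decidable equality and order on \(\N\), which the paper assumes, so no EM, LPO or MP is used.

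Alternatively, one could avoid \(\N\) altogether and prove each law by structural induction on \(\Ndel\), using Definition~\ref{def:natops} and Proposition~\ref{prop:orbitlaws}. The order of lemmas would be: \(0+x=x\) and \(Sx+y=S(x+y)\), then commutativity and associativity of \(+\), then distributivity, and then commutativity and associativity of \(\cdot\). Cancellation would follow by induction on \(z\) using injectivity of \(S\). The zero-product property would follow by case analysis on whether \(y\) is \(0\) or \(Sy'\), using \(x\cdot Sy'=x\cdot y'+x\) and the fact that \(a+b=0\) implies \(b=0\). Multiplicative cancellation would use trichotomy via decidable equality, or a double induction.

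I expect this last step, cancellation by a nonzero factor, to be the only real obstacle in the direct route. The transport route sidesteps it, so that is the route I would present.
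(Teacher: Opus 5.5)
Your proposal is correct and follows the paper's proof. The paper likewise transports every law along the bijection \(D\), which preserves \(0\), \(1\), \(+\) and \(\cdot\) by Lemmas~\ref{lem:display}, \ref{lem:adddisplay} and~\ref{lem:muldisplay}. Your extra details only fill in what the paper leaves implicit: checking \(D(1)=1\), getting \(D(z)\neq0\) from \(z\neq0\) via \(\nu\circ D=\mathrm{id}\), and noting that the facts used about \(\N\) hold constructively.
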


\begin{proof}
By Lemmas~\ref{lem:display}, \ref{lem:adddisplay}, and
\ref{lem:muldisplay}, the map
\(
D\colon\Ndel\longrightarrow\N
\)
is a bijection preserving \(0\), \(1\), addition, and multiplication. The
proof follows from the corresponding properties of \(\N\).
\end{proof}




  \begin{definition}\label{def:signed}
A \emph{signed orbit} is a pair
\(
(p,n)\in\Ndel\times\Ndel,
\)
where \(p\) and \(n\) are  its positive and negative parts. Two signed
orbits are \emph{balanced}, written
\[
(p,n)\sim(p',n'),
\]
if
\(
p+n'=p'+n.
\)
\end{definition}

\begin{lemma}\label{lem:balance}
The balance relation is a decidable equivalence relation.
\end{lemma}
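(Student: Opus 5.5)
The proof splits into decidability and the three equivalence-relation axioms. All of it rests on Proposition~\ref{prop:natarith}: $(\Ndel,+,\cdot,0,1)$ is a commutative semiring in which addition is cancellative. Decidability needs one extra fact, that equality on $\Ndel$ is decidable. I would prove this by structural induction on $x$, with a subsidiary induction on $y$, exactly as in Lemma~\ref{lem:qfdec}. The cases are handled by the successor laws of Proposition~\ref{prop:orbitlaws}, namely $Sr\neq0$ and injectivity of $S$. Alternatively, one can transport the question along the bijection $D$ of Lemma~\ref{lem:display} to decidable equality on $\N$. Since $(p,n)\sim(p',n')$ is the single equation $p+n'=p'+n$ in $\Ndel$, the relation is decidable. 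No choice or excluded middle enters.

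Reflexivity is immediate: $(p,n)\sim(p,n)$ is the literal equality $p+n=p+n$.

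Symmetry uses commutativity of $+$. From $p+n'=p'+n$ we obtain $p'+n=p+n'$ simply by reading the equation backwards, which is the required $(p',n')\sim(p,n)$.

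Transitivity is the only step with content. Assume
\[
p+n'=p'+n \qquad\text{and}\qquad p'+n''=p''+n'.
\]
Adding the two equations, and using associativity and commutativity to rearrange the terms, gives
\[
(p+n'')+(p'+n')=(p''+n)+(p'+n').
\]
Additive cancellation from Proposition~\ref{prop:natarith}, applied with $z=p'+n'$, then yields $p+n''=p''+n$. This is the main point where a nontrivial property is needed. Cancellation holds in $\Ndel$ because it is transported along $D$ from $\N$. It could also be proved directly, by induction on $z$ using injectivity of $S$. Either way the argument remains choice-free and uses no EM, LPO, or MP.
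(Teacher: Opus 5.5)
Your proof is correct and follows the paper's argument. Decidability comes from decidable equality on $\Ndel$, transported along $D$. Reflexivity and symmetry are immediate. Transitivity follows from additive cancellation (Proposition~\ref{prop:natarith}). The only cosmetic differences are these: you add both equations and cancel $p'+n'$, whereas the paper adds $n''$ to the first equation and cancels $n'$; and your symmetry step needs only symmetry of equality, not commutativity of $+$.
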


\begin{proof}
Reflexivity and symmetry are immediate. For transitivity, suppose
\[
(p,n)\sim(p',n')
\qquad\text{and}\qquad
(p',n')\sim(p'',n'').
\]
Then
\[
p+n'=p'+n,
\qquad
p'+n''=p''+n'.
\]
Adding \(n''\) to the first relation and using the second, we obtain
\[
(p+n'')+n'=(p''+n)+n',
\]
and additive cancellation (Proposition~\ref{prop:natarith}) gives
\(p+n''=p''+n\), so \((p,n)\sim(p'',n'')\).

The relation is decidable because addition and equality on \(\Ndel\) are
decidable.  Lemma~\ref{lem:display} implies that \(D\) is injective. Hence, for all \(x,y\in\Ndel\), we get
\[
x=y\quad\Longleftrightarrow\quad D(x)=D(y).
\]
Since equality on \(\mathbb N\) is decidable, then equality on \(\Ndel\) is decidable.
\end{proof}

\begin{lemma}\label{lem:balanceops}
The operations on signed orbits defined by
\[
(p,n)+(p',n')=(p+p',\,n+n')
\]
and
\[
(p,n)\cdot(p',n')
=
(p\cdot p'+n\cdot n',\,p\cdot n'+n\cdot p')
\]
are compatible with the balance relation in each argument.
\end{lemma}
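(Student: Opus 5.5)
We must show that if \((p,n)\sim(q,m)\), then replacing \((p,n)\) by \((q,m)\) in either argument of \(+\) or \(\cdot\) gives balanced results. Since both operations are commutative up to equality of representatives, it suffices to treat the first argument. For the second argument we use the identities
\[
(p,n)+(p',n')=(p',n')+(p,n),\qquad (p,n)\cdot(p',n')=(p',n')\cdot(p,n),
\]
which hold componentwise by commutativity of \(+\) and \(\cdot\) in \(\Ndel\) (Proposition~\ref{prop:natarith}). All computations take place in the commutative semiring \((\Ndel,+,\cdot,0,1)\), so only ring axioms and additive cancellation are available, never subtraction.

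For addition, assume \(p+m=q+n\). We need \((p+p')+(m+n')=(q+p')+(n+n')\). We rearrange each side by associativity and commutativity to \((p+m)+(p'+n')\) and \((q+n)+(p'+n')\), and then the hypothesis gives the equality directly.

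For multiplication, assume \(p+m=q+n\). We must show
\[
(pp'+nn')+(qn'+mp') = (qp'+mm'... )
\]
More precisely, the target is
\[
(pp'+nn')+(qn'+mp')=(qp'+mn')+(pn'+np').
\]
The plan is to multiply the hypothesis by \(p'\), giving \(pp'+mp'=qp'+np'\), and by \(n'\), giving \(qn'+nn'=pn'+mn'\), using distributivity. Adding these two identities yields
\[
pp'+mp'+qn'+nn'=qp'+np'+pn'+mn'.
\]
After rearrangement by commutativity and associativity, this is exactly the target identity. Here the \(n'\) instance was deliberately written with the sides swapped so that the terms match.

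The main obstacle is this multiplicative case: choosing the right multiples of the hypothesis and orienting them so that no cancellation is needed. If some terms failed to align, the fallback would be to add a common summand to both sides and then apply additive cancellation from Proposition~\ref{prop:natarith}. Alternatively, the whole lemma can be transported along the semiring isomorphism \(D\) of Lemmas~\ref{lem:display}–\ref{lem:muldisplay} and checked in \(\N\) by the same algebra.
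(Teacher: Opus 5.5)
Your proof is correct and is essentially the paper's argument: the addition case uses the same regrouping, the second argument is handled by commutativity in both, and multiplying $p+m=q+n$ by $p'$ and by $n'$ and adding is the paper's computation $r\cdot(p+m)+s\cdot(n+q)=r\cdot(q+n)+s\cdot(m+p)$ read in the other direction. Delete the stray fragment $(qp'+mm'\ldots)$ from your write-up.
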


\begin{proof}
Suppose
\[
(p,n)\sim(q,m),
\qquad
p+m=q+n,
\]
and let \((r,s)\) be a signed orbit.

For addition, we have
\[
\begin{aligned}
(p+r)+(m+s)
&=(p+m)+(r+s)\\
&=(q+n)+(r+s)\\
&=(q+r)+(n+s).
\end{aligned}
\]
Finally, we have
\(
(p+r,n+s)\sim(q+r,m+s).
\)

For multiplication, we obtain
\[
\begin{aligned}
&(p\cdot r+n\cdot s)+(q\cdot s+m\cdot r)\\
&=r\cdot(p+m)+s\cdot(n+q)\\
&=r\cdot(q+n)+s\cdot(m+p)\\
&=(q\cdot r+m\cdot s)+(p\cdot s+n\cdot r).
\end{aligned}
\]
Thus multiplication is compatible with balance in the first argument.
Compatibility in the second argument follows from commutativity.
\end{proof}

\begin{definition}\label{def:zdel}
The integer object is the quotient
\(
\Zdel:=(\Ndel\times\Ndel)/{\sim}.
\)
Its addition and multiplication are defined by
\[
[(p,n)]+[(p',n')]
=
[(p+p',\,n+n')]
\]
and
\[
[(p,n)]\cdot[(p',n')]
=
[(p\cdot p'+n\cdot n',\,
p\cdot n'+n\cdot p')].
\]
These operations are well-defined by Lemma~\ref{lem:balanceops}.
\end{definition}

For a signed orbit \(u=(p,n)\), define
\[
-u:=(n,p).
\]
On \(\Zdel\), define
\[
-[(p,n)]:=[(n,p)],
\qquad
0_{\Zdel}:=[(0,0)],
\qquad
1_{\Zdel}:=[(1,0)].
\]
 If \((p,n)\sim(p',n')\), then \(p+n'=p'+n\). By commutativity,
\(n+p'=n'+p\), that is \((n,p)\sim(n',p')\). Hence negation is well-defined
on \(\Zdel\).
{
\begin{lemma}\label{lem:izdel}
The map
\[
\iota_{\Zdel}\colon\Ndel\longrightarrow\Zdel,
\qquad
\iota_{\Zdel}(a)=[(a,0)],
\]
is an injective map preserving addition and multiplication.\end{lemma}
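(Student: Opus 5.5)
The plan is to verify the three claims in Lemma~\ref{lem:izdel} directly from the definition of the balance relation and the operations of Definition~\ref{def:zdel}. The only arithmetic facts needed are the semiring identities and additive cancellation of Proposition~\ref{prop:natarith}.

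\emph{Injectivity.} Suppose \(\iota_{\Zdel}(a)=\iota_{\Zdel}(b)\), that is \([(a,0)]=[(b,0)]\). We must pass from equality of classes to the relation itself, \((a,0)\sim(b,0)\). This step is where the quotient is used: we need effectiveness of the quotient, namely that equal classes have balanced representatives. I would invoke the standard exactness property of quotients by an equivalence relation available in the ambient metatheory; in Lean this is \texttt{Quotient.exact}, which is choice-free. Since balance is an equivalence relation by Lemma~\ref{lem:balance}, this applies. From \((a,0)\sim(b,0)\) we get \(a+0=b+0\), and the recursion equation \(x+0=x\) gives \(a=b\). I expect this exactness step to be the only point requiring care, since everything else is computation on representatives.

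\emph{Addition.} By Definition~\ref{def:zdel},
\[
\iota_{\Zdel}(a)+\iota_{\Zdel}(b)=[(a+b,\,0+0)].
\]
Since \(0+0=0\) by the recursion equation, this equals \([(a+b,0)]=\iota_{\Zdel}(a+b)\). Equality of representatives is enough here, so no balancing argument is needed.

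\emph{Multiplication.} By Definition~\ref{def:zdel},
\[
\iota_{\Zdel}(a)\cdot\iota_{\Zdel}(b)=[(a\cdot b+0\cdot 0,\ a\cdot 0+0\cdot b)].
\]
The equation \(x\cdot 0=0\) gives \(a\cdot 0=0\) and \(0\cdot 0=0\). The identity \(0\cdot b=0\) follows from commutativity of multiplication in Proposition~\ref{prop:natarith}, or by a short induction on \(b\). Then \(x+0=x\) collapses the pair to \((a\cdot b,0)\), so the product is \(\iota_{\Zdel}(a\cdot b)\).

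The same computation shows that \(\iota_{\Zdel}(0)=0_{\Zdel}\) and \(\iota_{\Zdel}(1)=1_{\Zdel}\), so \(\iota_{\Zdel}\) is in fact a semiring embedding. None of these steps uses EM, LPO, MP, or choice.
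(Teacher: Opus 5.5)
Your proof is correct and follows the paper's argument: the two operation identities are read off from Definition~\ref{def:zdel} on representatives, and injectivity comes from $(a,0)\sim(b,0)$, i.e.\ $a+0=b+0$, hence $a=b$. You make explicit the steps the paper leaves implicit, namely effectiveness of the quotient and the simplifications $0+0=0$ and $a\cdot 0=0\cdot b=0$.
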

\begin{proof}
Both identities
\[
[(a+b,0)]=[(a,0)]+[(b,0)],
\qquad
[(a\cdot b,0)]=[(a,0)]\cdot[(b,0)]
\]
hold by Definition~\ref{def:zdel}. If \(\iota_{\Zdel}(a)=\iota_{\Zdel}(b)\),
then \((a,0)\sim(b,0)\), that is \(a=b\).
\end{proof}

\begin{lemma}\label{lem:zmap}
The map
\[
D_{\Z}\colon\Zdel\longrightarrow\Z,
\qquad
D_{\Z}([(p,n)])=D(p)-D(n),
\]
is a bijection satisfying
\[
D_{\Z}(0_{\Zdel})=0,
\qquad
D_{\Z}(1_{\Zdel})=1,
\]
and preserving addition, multiplication, and negation.
\end{lemma}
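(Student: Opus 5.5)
The plan is to define $D_{\Z}$ on representatives, show it descends to the quotient, and then check each required property on representatives using Lemmas~\ref{lem:display}, \ref{lem:adddisplay} and~\ref{lem:muldisplay}.

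\emph{Well-definedness.} Set $\tilde D(p,n)=D(p)-D(n)\in\Z$ for a signed orbit $(p,n)$. Suppose $(p,n)\sim(p',n')$, so $p+n'=p'+n$. Applying $D$ and Lemma~\ref{lem:adddisplay} gives $D(p)+D(n')=D(p')+D(n)$ in $\N$. Hence $D(p)-D(n)=D(p')-D(n')$ in $\Z$. The universal property of the quotient then yields $D_{\Z}$ without any choice of representatives.

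\emph{Constants and operations.} These are direct computations on representatives.
\begin{itemize}
\item For the constants, $D_{\Z}([(0,0)])=0$ and $D_{\Z}([(1,0)])=D(S0)=1$.
\item For addition, Lemma~\ref{lem:adddisplay} gives $D(p+p')-D(n+n')=(D(p)-D(n))+(D(p')-D(n'))$.
\item For negation, $D(n)-D(p)=-(D(p)-D(n))$.
\item For multiplication, Lemmas~\ref{lem:adddisplay} and~\ref{lem:muldisplay} turn $D(pp'+nn')-D(pn'+np')$ into $D(p)D(p')+D(n)D(n')-D(p)D(n')-D(n)D(p')$. This factors as $(D(p)-D(n))(D(p')-D(n'))$ by ring arithmetic in $\Z$.
\end{itemize}

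\emph{Injectivity.} Suppose $D(p)-D(n)=D(p')-D(n')$. Then $D(p)+D(n')=D(p')+D(n)$, so $D(p+n')=D(p'+n)$. Since $D$ is injective (Lemma~\ref{lem:display}), $p+n'=p'+n$, which means $[(p,n)]=[(p',n')]$.

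\emph{Surjectivity.} Let $z\in\Z$. Integers in the metatheory are presented as either $m$ or $-(m+1)$ with $m\in\N$, and this case split is decidable. In the first case take $[(\nu(m),0)]$; in the second take $[(0,\nu(m+1))]$. By Lemma~\ref{lem:display}, $D\circ\nu=\mathrm{id}$, so these classes map to $z$.

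The only step that needs care is well-definedness: the passage from an equation in $\N$ to one in $\Z$ must be justified by the fact that the cast $\N\to\Z$ is an injective additive map. Everything else is routine symbolic algebra and uses no choice or excluded middle.
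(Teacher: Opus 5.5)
Your proposal is correct and follows the paper's proof essentially step for step: well-definedness via additivity of $D$, direct computation of the constants and operations, injectivity via Lemma~\ref{lem:display}, and surjectivity by a sign case split using $\nu$. One small correction to your closing remark: well-definedness needs only that the cast $\N\to\Z$ is additive, whereas its injectivity is what your injectivity step uses, to pass from $D(p)+D(n')=D(p')+D(n)$ in $\Z$ back to the same equation in $\N$.
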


\begin{proof}
By Lemmas~\ref{lem:adddisplay} and~\ref{lem:muldisplay}, we obtain
\[
D(a+b)=D(a)+D(b),
\qquad
D(a\cdot b)=D(a)D(b)
\]
for all \(a,b\in\Ndel\).

Suppose \((p,n)\sim(p',n')\). Then \(p+n'=p'+n\), hence
\(D(p)+D(n')=D(p')+D(n)\), and therefore
\[
D(p)-D(n)=D(p')-D(n').
\]
Thus \(D_{\Z}\) is well-defined.
Since \(D(1)=D(S0)=D(0)+1=1\), we have
\[
D_{\Z}(0_{\Zdel})=D(0)-D(0)=0,
\qquad
D_{\Z}(1_{\Zdel})=D(1)-D(0)=1,
\]
and
\[
D_{\Z}\bigl(-[(p,n)]\bigr)
=D_{\Z}([(n,p)])
=D(n)-D(p)
=-D_{\Z}([(p,n)]).
\]

For addition, we have
\[
\begin{aligned}
D_{\Z}\bigl([(p,n)]+[(p',n')]\bigr)
&=D(p+p')-D(n+n')\\
&=\bigl(D(p)-D(n)\bigr)+\bigl(D(p')-D(n')\bigr)\\
&=D_{\Z}([(p,n)])+D_{\Z}([(p',n')]).
\end{aligned}
\]
For multiplication, we have
\[
\begin{aligned}
&D_{\Z}\bigl([(p,n)]\cdot[(p',n')]\bigr)\\
&=D(p\cdot p'+n\cdot n')-D(p\cdot n'+n\cdot p')\\
&=\bigl(D(p)D(p')+D(n)D(n')\bigr)
-\bigl(D(p)D(n')+D(n)D(p')\bigr)\\
&=\bigl(D(p)-D(n)\bigr)\bigl(D(p')-D(n')\bigr)\\
&=D_{\Z}([(p,n)])\,D_{\Z}([(p',n')]).
\end{aligned}
\]

If \(D_{\Z}([(p,n)])=D_{\Z}([(p',n')])\), then
\[
D(p)+D(n')=D(p')+D(n),
\]
so \(D(p+n')=D(p'+n)\). Since \(D\) is injective by
Lemma~\ref{lem:display}, we get \(p+n'=p'+n\), hence
\((p,n)\sim(p',n')\) and \([(p,n)]=[(p',n')]\). Thus \(D_{\Z}\) is
injective.

 If \(z\geq0\), then
\(D_{\Z}([(\nu(z),0)])=z\), and if \(z<0\), then
\(D_{\Z}([(0,\nu(-z))])=z\). Thus \(D_{\Z}\) is surjective.
\end{proof}

\begin{corollary}\label{cor:ziso}
The structure
\(
(\Zdel,+,\cdot,-,0_{\Zdel},1_{\Zdel})
\)
is a commutative ring, and \(D_{\Z}\) is a ring isomorphism.
\end{corollary}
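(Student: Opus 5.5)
The plan is to transport the ring axioms along the bijection \(D_{\Z}\) of Lemma~\ref{lem:zmap}, exactly as Proposition~\ref{prop:natarith} transports the semiring axioms along \(D\). By Lemma~\ref{lem:zmap}, \(D_{\Z}\colon\Zdel\to\Z\) is a bijection that maps \(0_{\Zdel}\) to \(0\) and \(1_{\Zdel}\) to \(1\). It also preserves addition, multiplication and negation. Every ring axiom is an equation between terms built from \(+,\cdot,-,0,1\).

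Take, for example, associativity of multiplication. For \(u,v,w\in\Zdel\) we compute
\[
D_{\Z}\bigl((u\cdot v)\cdot w\bigr)=\bigl(D_{\Z}(u)D_{\Z}(v)\bigr)D_{\Z}(w)=D_{\Z}(u)\bigl(D_{\Z}(v)D_{\Z}(w)\bigr)=D_{\Z}\bigl(u\cdot(v\cdot w)\bigr),
\]
using the homomorphism property twice and associativity in \(\Z\). Injectivity of \(D_{\Z}\) then gives \((u\cdot v)\cdot w=u\cdot(v\cdot w)\). The same pattern handles the remaining axioms: associativity and commutativity of \(+\) and \(\cdot\), the identities \(u+0_{\Zdel}=u\) and \(u\cdot1_{\Zdel}=u\), distributivity, and \(u+(-u)=0_{\Zdel}\). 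None of these steps requires choosing representatives, since all operations on \(\Zdel\) are already well-defined by Lemma~\ref{lem:balanceops} and the remark on negation. The argument is therefore choice-free and uses only the equational theory of \(\Z\) in the metatheory.

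Once \(\Zdel\) is known to be a commutative ring, \(D_{\Z}\) is a bijective map preserving \(0,1,+,\cdot\), so it is a ring isomorphism. Its set-theoretic inverse is automatically a ring homomorphism: applying \(D_{\Z}\) to \(D_{\Z}^{-1}(a)+D_{\Z}^{-1}(b)\) gives \(a+b\), and the same holds for products. I expect no real obstacle. The only point needing care is that axioms are transported pointwise by injectivity, not by any structural argument on the quotient. As an alternative, one could check the axioms directly on representatives using Proposition~\ref{prop:natarith}, but transport along \(D_{\Z}\) is shorter.
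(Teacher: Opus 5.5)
Your proposal is correct and follows the paper's proof, which likewise transports the commutative ring structure of \(\Z\) to \(\Zdel\) along the bijection \(D_{\Z}\) of Lemma~\ref{lem:zmap}, using that \(D_{\Z}\) preserves \(0\), \(1\), addition, multiplication and negation. You also spell out the axiom-by-axiom injectivity argument, which the paper leaves implicit.
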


\begin{proof}
By Lemma~\ref{lem:zmap}, \(D_{\Z}\) is a bijection preserving \(0\), \(1\),
addition, multiplication, and negation. Hence the commutative ring structure
of \(\Z\) transfers to \(\Zdel\), and \(D_{\Z}\) is a ring isomorphism.
\end{proof}

\smallskip


We now construct the fraction object over \(\Zdel\), using signed-orbit
representatives as numerators and nonzero elements of \(\Ndel\) as
denominators.

\begin{definition}\label{def:ratio}
A \emph{ratio orbit} is a pair
\(
(u,d),
\)
where \(u\) is a signed orbit and \(d\in\Ndel\) is nonzero. We identify
\(d\in\Ndel\) with the signed orbit \((d,0)\).

Two ratio orbits \((u,d)\) and \((u',d')\) are \emph{cross-equal}, written
\(
(u,d)\approx(u',d'),
\)
if the signed orbits \(u\cdot d'\) and \(u'\cdot d\) are balanced.
\end{definition}

\begin{lemma}\label{lem:balancecancel}
Let \(u,v\) be signed orbits and let \(d\in\Ndel\) be nonzero. If
\[
u\cdot d\sim v\cdot d,
\]
then
\(
u\sim v.
\)
\end{lemma}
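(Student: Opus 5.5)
Write \(u=(p,n)\) and \(v=(q,m)\). By the definition of the product of signed orbits, with \(d\) identified with \((d,0)\), we get
\[
u\cdot d=(p\cdot d+n\cdot 0,\;p\cdot 0+n\cdot d)=(p\cdot d,\;n\cdot d),
\]
using \(x\cdot 0=0\) and \(x+0=x\) from Definition~\ref{def:natops}. Similarly \(v\cdot d=(q\cdot d,\,m\cdot d)\). The hypothesis \(u\cdot d\sim v\cdot d\) therefore unfolds to
\[
p\cdot d+m\cdot d=q\cdot d+n\cdot d .
\]

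By distributivity in the commutative semiring \((\Ndel,+,\cdot,0,1)\) of Proposition~\ref{prop:natarith}, this becomes
\[
(p+m)\cdot d=(q+n)\cdot d .
\]
Since \(d\neq0\), the multiplicative cancellation law of Proposition~\ref{prop:natarith} gives \(p+m=q+n\). This is exactly \((p,n)\sim(q,m)\), that is, \(u\sim v\).

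The only point needing care is the first step. One has to check that multiplying by the embedded denominator \((d,0)\) really yields \((p\cdot d,n\cdot d)\) on the nose, and not merely up to balance, because the hypothesis is stated on representatives. This is a direct computation from the recursion equations together with commutativity.

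The argument is choice-free and uses no nonconstructive principle. Cancellation holds because it transfers through the bijection \(D\) from \(\N\), and the hypothesis \(d\neq0\) is used in its given negative form, exactly as Proposition~\ref{prop:natarith} requires.
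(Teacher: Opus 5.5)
Your proof is correct and follows the paper's argument exactly: unfold the balance condition to \(p\cdot d+m\cdot d=q\cdot d+n\cdot d\), factor it as \((p+m)\cdot d=(q+n)\cdot d\) by distributivity, and cancel the nonzero \(d\) using Proposition~\ref{prop:natarith}. You also check explicitly that \(u\cdot(d,0)=(p\cdot d,\,n\cdot d)\) as elements, not just up to balance, which the paper leaves implicit; as you note, the second component needs \(0+x=x\), so commutativity is used and not only the recursion equation \(x+0=x\).
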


\begin{proof}
Let
\(
u=(p,n)\) and \(
v=(q,m).
\)
We have
\[
p\cdot d+m\cdot d=q\cdot d+n\cdot d,
\]
and therefore
\[
(p+m)\cdot d=(q+n)\cdot d.
\]
Since \(d\neq0\), using cancellation from
Proposition~\ref{prop:natarith}, we have
\[
p+m=q+n.
\]
Hence \(u\sim v\).
\end{proof}

\begin{lemma}\label{lem:cross}
Cross-equality is a decidable equivalence relation on ratio orbits.
\end{lemma}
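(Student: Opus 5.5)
We need to show that cross-equality is reflexive, symmetric, transitive, and decidable. The plan is to work at the level of signed orbits, using that balance $\sim$ is a decidable equivalence (Lemma~\ref{lem:balance}) compatible with signed-orbit multiplication (Lemma~\ref{lem:balanceops}). We also need the algebraic laws of signed-orbit multiplication (associativity, commutativity) up to $\sim$. These laws can be read off from Corollary~\ref{cor:ziso}: $\Zdel$ is a commutative ring, so the corresponding identities hold for representatives up to balance.

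\textbf{Reflexivity and symmetry.} Reflexivity, $(u,d)\approx(u,d)$, is reflexivity of $\sim$ applied to $u\cdot d$. Symmetry follows from symmetry of $\sim$, since the defining condition $u\cdot d'\sim u'\cdot d$ merely swaps sides.

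\textbf{Transitivity.} This is the main step. Suppose
\[
u\cdot d'\sim u'\cdot d
\qquad\text{and}\qquad
u'\cdot d''\sim u''\cdot d'.
\]
Multiply the first relation by $d''$ and the second by $d$; this preserves $\sim$ by Lemma~\ref{lem:balanceops}. Rearranging with associativity and commutativity up to $\sim$ gives
\[
(u\cdot d'')\cdot d'\sim u'\cdot d\cdot d''\sim (u''\cdot d)\cdot d'.
\]
Since $d'\neq0$, Lemma~\ref{lem:balancecancel} cancels $d'$ and yields $u\cdot d''\sim u''\cdot d$.

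The delicate point is that the rearrangements hold only up to $\sim$, not as literal equalities of pairs. There are two ways to handle this. One is to pass to classes in $\Zdel$, where the equalities are genuine ring identities, and then return to $\sim$ on representatives. The other is to verify directly in $\Ndel$ that the components agree, using the commutative semiring laws of Proposition~\ref{prop:natarith}. I expect the bookkeeping to be the main obstacle. It is cleanest to note once that $[\,u\cdot v\,]=[u]\cdot[v]$ by Definition~\ref{def:zdel}, so each chain becomes an equation in the commutative ring $\Zdel$.

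\textbf{Decidability.} Given $(u,d)$ and $(u',d')$, compute the signed orbits $u\cdot d'$ and $u'\cdot d$ explicitly. Balance between them is decidable by Lemma~\ref{lem:balance}, since addition is computable and equality on $\Ndel$ is decidable via the bijection $D$. No choice and no nonconstructive principle is used.
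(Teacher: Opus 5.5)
Your proof is correct and follows the paper's argument essentially step for step. Reflexivity and symmetry come from those of balance; transitivity comes from multiplying the two hypotheses by $d''$ and $d$ via Lemma~\ref{lem:balanceops} and cancelling $d'$ with Lemma~\ref{lem:balancecancel}; decidability comes from decidability of balance. The bookkeeping you flag is milder than you expect: by the commutative semiring laws of Proposition~\ref{prop:natarith}, multiplication of signed orbits is associative and commutative as a literal equality of pairs, and multiplying by $d\equiv(d,0)$ simply scales both components, so no detour through $\Zdel$ is needed (though that detour is also valid).
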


\begin{proof}
Reflexivity and symmetry follow from reflexivity and symmetry of the balance
relation. For transitivity, suppose
\[
(u,d)\approx(u',d')
\qquad\text{and}\qquad
(u',d')\approx(u'',d'').
\]
Then
\[
u\cdot d'\sim u'\cdot d,
\qquad
u'\cdot d''\sim u''\cdot d'.
\]
By Lemma~\ref{lem:balanceops}, multiplying the first relation by \(d''\) and
the second by \(d\), and using associativity and commutativity of
multiplication, we have
\[
(u\cdot d'')\cdot d'
\sim
(u''\cdot d)\cdot d'.
\]

We use cancellation with respect to balance. Since \(d'\neq0\), cancellation gives
\[
u\cdot d''\sim u''\cdot d.
\]
Therefore
\(
(u,d)\approx(u'',d''),
\)
so cross-equality is transitive.

Finally, cross-equality is decidable because multiplication of signed orbits
is recursively defined and balance is decidable.
\end{proof}

\begin{lemma}\label{lem:ratioops}
Addition and multiplication are well-defined on
cross-equality classes.
\end{lemma}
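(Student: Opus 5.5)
We first fix the operations on ratio orbits explicitly, since the statement presupposes them:
\[
(u,d)+(u',d')=(u\cdot d'+u'\cdot d,\;d\cdot d'),
\qquad
(u,d)\cdot(u',d')=(u\cdot u',\;d\cdot d'),
\]
where \(d\in\Ndel\) is identified with the signed orbit \((d,0)\). The first check is that these are again ratio orbits. The denominator \(d\cdot d'\) is nonzero by the zero-divisor clause of Proposition~\ref{prop:natarith}. Concretely, \(d\cdot d'=0\) would give \(d=0\vee d'=0\), which contradicts the hypotheses. We also note that the identification is multiplicative: \((d,0)\cdot(d',0)=(d\cdot d',0)\). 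This follows directly from the multiplication formula of Lemma~\ref{lem:balanceops}, since \(d\cdot0+0\cdot d'=0\).

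By symmetry it suffices to prove compatibility in the first argument. The second argument then follows from commutativity of both operations. Commutativity of the product and of the sum \(u\cdot d'+u'\cdot d\) holds up to balance, since \(\Zdel\) is a commutative ring by Corollary~\ref{cor:ziso}.

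Assume \((u,d)\approx(v,e)\), that is, \(u\cdot e\sim v\cdot d\), and fix \((w,f)\).

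\textbf{Multiplication.} We must show
\[
(u\cdot w)\cdot(e\cdot f)\sim(v\cdot w)\cdot(d\cdot f).
\]
Rearrange the left side as \((u\cdot e)\cdot(w\cdot f)\). Then apply Lemma~\ref{lem:balanceops} to multiply the hypothesis by \(w\cdot f\), and rearrange back.

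\textbf{Addition.} We must show
\[
(u\cdot f+w\cdot d)\cdot(e\cdot f)\sim(v\cdot f+w\cdot e)\cdot(d\cdot f).
\]
Expand the left side as \((u\cdot e)\cdot f\cdot f+w\cdot d\cdot e\cdot f\). Expand the right side as \((v\cdot d)\cdot f\cdot f+w\cdot e\cdot d\cdot f\). The first summands are balanced by Lemma~\ref{lem:balanceops} applied to the hypothesis with the multiplier \(f\cdot f\). The second summands are balanced by commutativity. Compatibility of \(+\) with \(\sim\), again Lemma~\ref{lem:balanceops}, then gives the claim.

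The main obstacle is that every rearrangement above holds only up to balance, not as literal equality of pairs. In particular, associativity, commutativity and distributivity of signed-orbit multiplication are not equalities in \(\Ndel\times\Ndel\). The cleanest route is to pass to \(\Zdel\). Balance is exactly equality of classes there, Lemma~\ref{lem:balanceops} says the class operations are well defined, and Corollary~\ref{cor:ziso} makes \(\Zdel\) a commutative ring. Every rearrangement then becomes a ring identity in \(\Zdel\). Once this transfer is set up, both compatibility checks reduce to multiplying the identity \([u][e]=[v][d]\) by an element of \(\Zdel\). No cancellation (Lemma~\ref{lem:balancecancel}) is needed, and no choice is used: the operations are defined on representatives and shown to respect \(\approx\).
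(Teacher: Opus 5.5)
Your proof is correct and is essentially the paper's argument: nonzero denominators from the zero-divisor clause of Proposition~\ref{prop:natarith}, compatibility in the first argument by multiplying $u\cdot e\sim v\cdot d$ through Lemma~\ref{lem:balanceops} and rearranging, and the second argument by symmetry. The detour through $\Zdel$ is harmless but unnecessary, and your reason for it is mistaken: expanding componentwise shows that signed-orbit multiplication is associative, commutative and distributive over addition as literal equalities of pairs in $\Ndel\times\Ndel$, since each law reduces to a semiring identity in $\Ndel$, and this is how the paper justifies its rearrangements directly.
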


\begin{proof}
By Proposition~\ref{prop:natarith}, the semiring \(\Ndel\) has no zero
divisors, so the product of two nonzero denominators is nonzero. Suppose
\[
(u,d)\approx(v,e),
\qquad
u\cdot e\sim v\cdot d.
\]
For any ratio orbit \((w,f)\), the products
\(w\cdot d\cdot e\cdot f\) and \(w\cdot e\cdot d\cdot f\) coincide by
Proposition~\ref{prop:natarith}, and Lemma~\ref{lem:balanceops} applied to
\(u\cdot e\sim v\cdot d\) gives
\[
(u\cdot f+w\cdot d)\cdot(e\cdot f)
\sim
(v\cdot f+w\cdot e)\cdot(d\cdot f),
\]
so addition is compatible with cross-equality. Also, we have
\[
(u\cdot w)\cdot(e\cdot f)
\sim
(v\cdot w)\cdot(d\cdot f),
\]
so multiplication is compatible with cross-equality. The same argument
applies to the second variable.
\end{proof}

\begin{definition}\label{def:qdel}
The rational object is the quotient
\[
\Qdel:=\{(u,d): u\text{ is a signed orbit},\ d\in\Ndel,\ d\neq0\}/{\approx}.
\]
Addition and multiplication are induced by
\[
(u,d)+(u',d')
=
(u\cdot d'+u'\cdot d,\;d\cdot d')
\]
and
\[
(u,d)\cdot(u',d')
=
(u\cdot u',\;d\cdot d').
\]
\end{definition}

The zero and unit of \(\Qdel\) are represented by
\[
0_{\Qdel}=[((0,0),1)],
\qquad
1_{\Qdel}=[((1,0),1)].
\]
By Lemma~\ref{lem:ratioops}, these formulas induce well-defined operations
on \(\Qdel\).
For a signed orbit \(u=(p,n)\), let \(-u=(n,p)\). The formula
\[
-[(u,d)]:=[(-u,d)]
\]
defines a negation on \(\Qdel\). If \((u,d)\approx(v,e)\), then
\[
u\cdot e\sim v\cdot d
\]
implies
\[
(-u)\cdot e\sim(-v)\cdot d.
\]
Hence negation is well-defined.
\begin{lemma}\label{lem:iqdel}
The map
\[
\iota_{\Qdel}\colon\Zdel\longrightarrow\Qdel,
\qquad
\iota_{\Qdel}([(p,n)])=[((p,n),1)],
\]
is injective and preserves addition and multiplication.\end{lemma}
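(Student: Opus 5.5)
The plan has three steps: check that \(\iota_{\Qdel}\) is well-defined on the quotient, verify it preserves addition and multiplication at the level of representatives, and then prove injectivity using cancellation by the unit denominator.

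\emph{Well-definedness.} Suppose \((p,n)\sim(p',n')\). We must show \(((p,n),1)\approx((p',n'),1)\), that is, \((p,n)\cdot 1\sim(p',n')\cdot 1\), where \(1\) is identified with the signed orbit \((1,0)\). First compute \((p,n)\cdot(1,0)=(p\cdot1+n\cdot0,\;p\cdot0+n\cdot1)\). By the semiring identities of Proposition~\ref{prop:natarith}, this is literally equal to \((p,n)\). Hence cross-equality with denominator \(1\) on both sides reduces exactly to balance of the numerators. This gives well-definedness, and also the reverse implication used for injectivity below.

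\emph{Addition.} By Definition~\ref{def:qdel},
\[
[((p,n),1)]+[((p',n'),1)]=[((p,n)\cdot1+(p',n')\cdot1,\;1\cdot1)].
\]
The right-hand side equals \([((p+p',n+n'),1)]\) by the computation \(u\cdot1=u\) above and \(1\cdot1=1\) in \(\Ndel\). This is \(\iota_{\Qdel}\bigl([(p,n)]+[(p',n')]\bigr)\) by Definition~\ref{def:zdel}.

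\emph{Multiplication.} Similarly,
\[
[((p,n),1)]\cdot[((p',n'),1)]=[((p,n)\cdot(p',n'),\;1)],
\]
which matches \(\iota_{\Qdel}\) applied to the product in \(\Zdel\), since the product formulas for signed orbits coincide. In both cases we compare representatives with equal denominator \(1\), so only the reflexivity of \(\approx\) is needed, once the numerators are equal as pairs.

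\emph{Injectivity.} If \(\iota_{\Qdel}([(p,n)])=\iota_{\Qdel}([(p',n')])\), then \(((p,n),1)\approx((p',n'),1)\), so \((p,n)\cdot1\sim(p',n')\cdot1\). By the identity \(u\cdot1=u\) (or alternatively Lemma~\ref{lem:balancecancel} with \(d=1\neq0\), using \(S0\neq0\)), this gives \((p,n)\sim(p',n')\), and hence \([(p,n)]=[(p',n')]\) in \(\Zdel\).

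The only mildly delicate point is the bookkeeping identity \(u\cdot1=u\) for signed orbits, which rests on \(x\cdot1=x\) and \(x\cdot0=0\) in \(\Ndel\) together with \(x+0=x\) and \(0+x=x\), all available from the commutative semiring structure of Proposition~\ref{prop:natarith}. I expect no real obstacle beyond this.
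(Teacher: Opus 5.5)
Your proposal is correct and follows the paper's proof: well-definedness, preservation of the operations because the formulas of Definition~\ref{def:qdel} reduce to those of Definition~\ref{def:zdel} when both denominators are \(1\), and injectivity from \((p,n)\cdot1\sim(p',n')\cdot1\). You also make explicit the signed-orbit identity \(u\cdot1=u\), which the paper uses without stating it.
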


\begin{proof}
If \((p,n)\sim(p',n')\), then \(((p,n),1)\approx((p',n'),1)\), so the map is
well-defined. The two operation formulas of Definition~\ref{def:qdel} reduce
to the operations of Definition~\ref{def:zdel} when both denominators equal
\(1\), so \(\iota_{\Qdel}\) preserves addition and multiplication. Finally,
\([((p,n),1)]=[((p',n'),1)]\) gives \((p,n)\cdot1\sim(p',n')\cdot1\), hence
\((p,n)\sim(p',n')\).
\end{proof}

\begin{proposition}\label{prop:qiso}
The map
\[
D_{\Q}\colon\Qdel\longrightarrow\Q,
\qquad
D_{\Q}([(u,d)])=\frac{D_{\Z}([u])}{D(d)},
\]
is a bijection preserving \(0\), \(1\), addition, multiplication, and
negation.
\end{proposition}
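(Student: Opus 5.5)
The plan mirrors the proof of Lemma~\ref{lem:zmap}, transporting everything along the ring isomorphism \(D_{\Z}\) of Corollary~\ref{cor:ziso} and the bijection \(D\) of Lemma~\ref{lem:display}.

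\emph{Well-definedness.} First I check that \(D_{\Q}\) makes sense on representatives. If \(d\neq0\) then \(D(d)\neq0\), since \(D\) is injective and \(D(0)=0\); so the quotient \(D_{\Z}([u])/D(d)\) is a legitimate rational. Now suppose \((u,d)\approx(u',d')\), that is, \(u\cdot d'\sim u'\cdot d\), where \(d\) is identified with the signed orbit \((d,0)\). Apply \(D_{\Z}\), which is multiplicative, and use \(D_{\Z}([(d,0)])=D(d)\). This gives
\[
D_{\Z}([u])\,D(d')=D_{\Z}([u'])\,D(d),
\]
and dividing by \(D(d)D(d')\) shows that \(D_{\Q}\) is independent of the representative.

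\emph{Operations.} Preservation of \(0\) and \(1\) is immediate from \(D(1)=1\) and Lemma~\ref{lem:zmap}. For addition, Lemma~\ref{lem:zmap} and Lemma~\ref{lem:muldisplay} give
\[
D_{\Q}\bigl([(u\cdot d'+u'\cdot d,\,d\cdot d')]\bigr)=\frac{D_{\Z}([u])D(d')+D_{\Z}([u'])D(d)}{D(d)D(d')},
\]
which is the usual sum of fractions. Multiplication follows in the same way. Negation reduces to \(D_{\Z}([-u])=-D_{\Z}([u])\).

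\emph{Bijectivity.} For injectivity, suppose \(D_{\Q}\) takes the same value on \((u,d)\) and \((u',d')\). Cross-multiplying gives
\[
D_{\Z}([u\cdot d'])=D_{\Z}([u'\cdot d]).
\]
Injectivity of \(D_{\Z}\) then yields \([u\cdot d']=[u'\cdot d]\), that is, \(u\cdot d'\sim u'\cdot d\), which is exactly cross-equality. For surjectivity, write any \(q\in\Q\) as \(a/b\) with \(a\in\Z\) and \(b\in\N\), \(b\geq1\). Choose \(u\) with \(D_{\Z}([u])=a\) using the explicit preimages from Lemma~\ref{lem:zmap}, and take \(d=\nu(b)\). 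Then \(d\neq0\) because \(D(\nu(b))=b\neq0\).

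The only point needing care is the repeated passage between \(D_{\Z}\) applied to signed-orbit products with \((d,0)\) and the products \(D(d)\cdot D_{\Z}([u])\). This is a single application of multiplicativity in Lemma~\ref{lem:zmap}. All remaining steps are routine and choice-free, because every preimage is given explicitly by \(\nu\).
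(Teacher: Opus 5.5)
Your proposal is correct and follows essentially the same route as the paper. Well-definedness and injectivity come from the multiplicativity and injectivity of \(D_{\Z}\) applied to the cross-multiplied signed orbits \(u\cdot d'\) and \(u'\cdot d\); the operations are checked by direct computation with Lemmas~\ref{lem:zmap} and~\ref{lem:muldisplay}; and surjectivity uses the explicit preimage \((u,\nu(b))\). You also make explicit two small points the paper leaves implicit, namely \(D_{\Z}([(d,0)])=D(d)\) and \(\nu(b)\neq0\).
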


\begin{proof}
Since \(d\neq0\), we have \(D(d)>0\). Suppose
\(
(u,d)\approx(v,e).
\)
Then \(u\cdot e\sim v\cdot d\), and therefore
\[
D_{\Z}([u])D(e)=D_{\Z}([v])D(d).
\]
Hence \(D_{\Q}\) is well-defined.

By Lemmas~\ref{lem:zmap} and~\ref{lem:muldisplay},
\[
D_{\Z}([u\cdot e])=D_{\Z}([u])D(e),
\qquad
D(d\cdot e)=D(d)D(e),
\]
where \(e\in\Ndel\) is identified with the signed orbit \((e,0)\).

Conversely, if
\[
D_{\Q}([(u,d)])=D_{\Q}([(v,e)]),
\]
then
\[
D_{\Z}([u])D(e)=D_{\Z}([v])D(d).
\]
Thus
\[
D_{\Z}([u\cdot e])=D_{\Z}([v\cdot d]).
\]
The injectivity of \(D_{\Z}\) gives
\(
u\cdot e\sim v\cdot d,
\)
so \((u,d)\approx(v,e)\). Hence \(D_{\Q}\) is injective.

Clearly \(D_{\Q}(0_{\Qdel})=0\) and \(D_{\Q}(1_{\Qdel})=1\). Hence\[
\begin{aligned}
D_{\Q}\bigl([(u,d)]+[(v,e)]\bigr)
&=\frac{D_{\Z}([u\cdot e+v\cdot d])}{D(d\cdot e)}\\
&=\frac{D_{\Z}([u])D(e)+D_{\Z}([v])D(d)}{D(d)D(e)}\\
&=D_{\Q}([(u,d)])+D_{\Q}([(v,e)]),
\end{aligned}
\]
and
\[
D_{\Q}\bigl([(u,d)]\cdot[(v,e)]\bigr)
=\frac{D_{\Z}([u\cdot v])}{D(d\cdot e)}
=\frac{D_{\Z}([u])D_{\Z}([v])}{D(d)D(e)}
=D_{\Q}([(u,d)])\,D_{\Q}([(v,e)]).
\]
Finally, \(D_{\Z}([-u])=-D_{\Z}([u])\) gives
\(D_{\Q}(-[(u,d)])=-D_{\Q}([(u,d)])\).

Let
\[
q=\frac{a}{b},
\qquad
a\in\Z,\quad b\in\N,\quad b>0.
\]
By the surjectivity of \(D_{\Z}\), choose a signed orbit \(u\) such that
\[
D_{\Z}([u])=a.
\]
Then
\[
D_{\Q}([(u,\nu(b))])
=
\frac{a}{b}
=
q.
\]
Thus \(D_{\Q}\) is surjective.
\end{proof}

\begin{corollary}\label{cor:qfield}
The operations transported from \(\Q\) through \(D_{\Q}\) make
\(\Qdel\) a field, and \(D_{\Q}\) is a field isomorphism.
\end{corollary}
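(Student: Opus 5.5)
The plan is to transport structure along the bijection \(D_{\Q}\) of Proposition~\ref{prop:qiso}, in the same way Corollary~\ref{cor:ziso} was obtained from Lemma~\ref{lem:zmap}. The operations \(+\), \(\cdot\) and \(-\) on \(\Qdel\), together with \(0_{\Qdel}\) and \(1_{\Qdel}\), are already defined. By Proposition~\ref{prop:qiso}, \(D_{\Q}\) preserves all of them, so they coincide with the operations transported from \(\Q\):
\[
x\star y=D_{\Q}^{-1}\bigl(D_{\Q}(x)\star D_{\Q}(y)\bigr),\qquad \star\in\{+,\cdot\},
\]
and similarly for negation and the constants. The one operation not yet defined is the inverse. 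We set
\[
x^{-1}:=D_{\Q}^{-1}\bigl(D_{\Q}(x)^{-1}\bigr)\qquad\text{for }x\neq0_{\Qdel}.
\]
This is well-defined because \(D_{\Q}\) is injective, so \(x\neq 0_{\Qdel}\) gives \(D_{\Q}(x)\neq0\).

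Each field axiom for \(\Qdel\) is an equation or implication between terms built from these operations. Applying \(D_{\Q}\) turns it into the corresponding axiom of \(\Q\), which holds. Injectivity of \(D_{\Q}\) then brings the equality back to \(\Qdel\). This covers associativity, commutativity, distributivity, the identities \(x+(-x)=0\), \(x\cdot x^{-1}=1\) for \(x\neq0\), and \(0_{\Qdel}\neq1_{\Qdel}\), the last because \(D_{\Q}(0_{\Qdel})=0\neq1=D_{\Q}(1_{\Qdel})\). A bijection that preserves \(0\), \(1\), \(+\), \(\cdot\) and negation between fields is a field isomorphism, and it automatically preserves inverses.

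No choice is involved. The inverse map \(D_{\Q}^{-1}\) can be given explicitly: send \(a/b\) in lowest terms, with \(b>0\), to \([(u,\nu(b))]\), where \(u=(\nu(a),0)\) if \(a\ge0\) and \(u=(0,\nu(-a))\) otherwise, as in the surjectivity argument of Proposition~\ref{prop:qiso}. Decidability of equality on \(\Q\), and hence on \(\Qdel\) by Lemma~\ref{lem:cross}, provides the case split \(x=0\) or \(x\neq0\) needed to state the inverse axiom constructively.

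The main point to watch is this constructive form of the field axioms. I would state the field property as "every \(x\neq 0\) has an inverse" and use the decidable equality above, so that no excluded middle enters. Everything else is routine transport.
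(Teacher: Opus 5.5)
Your proposal is correct and follows the same route as the paper, which transports the field structure of $\Q$ along the bijection $D_{\Q}$ of Proposition~\ref{prop:qiso}, using that $D_{\Q}$ preserves $0$, $1$, $+$, $\cdot$ and negation. Three points in your version are left implicit in the paper's one-line proof: defining the inverse as $D_{\Q}^{-1}\bigl(D_{\Q}(x)^{-1}\bigr)$, describing $D_{\Q}^{-1}$ without choice through reduced representatives, and stating the inverse axiom constructively.
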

\begin{proof}
By Proposition~\ref{prop:qiso}, \(D_{\Q}\) is a bijection preserving
\(0\), \(1\), addition, multiplication, and negation. Hence the field
structure of \(\Q\) transfers to \(\Qdel\).
\end{proof}



\subsection{Encodability of the metatheoretic number systems}\label{sec:encodability}

This section proves Theorem~\ref{thm:construction}$(b)$. By Lemma~\ref{lem:display}, \(\delta\)-encodability is equivalent to the
existence of an injection into \(\N\). The next proposition gives explicit encodings of \(\N\), \(\Z\), and \(\Q\).

\begin{proposition}\label{thm:standardencodable}
The metatheoretic number systems $\N$, $\Z$, and $\Q$ are
$\delta$-encodable.  
\end{proposition}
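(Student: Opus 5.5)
By Definition~\ref{def:encodable}, it suffices to give, for each of \(\N\), \(\Z\), \(\Q\), an explicit injection into \(\Ndel\). By Lemma~\ref{lem:display}, \(\nu\colon\N\to\Ndel\) is a bijection with inverse \(D\). So an injection \(X\hookrightarrow\N\) composed with \(\nu\) is an injection \(X\hookrightarrow\Ndel\), and the problem reduces to writing down explicit injections into \(\N\). For \(\N\) itself we simply take \(\nu\), which is injective because \(D\circ\nu=\mathrm{id}_{\N}\).

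For \(\Z\), we use the standard zigzag encoding
\[
z\mapsto 2z \ (z\ge0),\qquad z\mapsto -2z-1\ (z<0).
\]
This map is defined by case analysis on the decidable sign of \(z\). It is injective because even values come only from \(z\geq 0\), odd values come only from \(z<0\), and each branch is injective. Composing with \(\nu\) gives \(\Z\hookrightarrow\Ndel\). Alternatively, one could go through \(D_{\Z}^{-1}\colon \Z\to\Zdel\) and a pairing on representatives, but the direct encoding is simpler.

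For \(\Q\), we use the canonical reduced representation: each \(q\in\Q\) has a unique pair \((a,b)\) with \(a\in\Z\), \(b\geq1\), and \(\gcd(|a|,b)=1\). This representation is computable by the Euclidean algorithm, so no choice is involved, since uniqueness of the reduced form makes the map \(q\mapsto(a,b)\) a function. We then apply an injective pairing \(\N\times\N\to\N\), for instance the Cantor pairing \(\langle m,n\rangle=\tfrac{(m+n)(m+n+1)}{2}+n\), or \(\langle m,n\rangle = 2^m(2n+1)-1\). Injectivity follows from a short arithmetic argument, such as uniqueness of the \(2\)-adic valuation and odd part. The map is
\[
q\mapsto \nu\bigl(\langle \zeta(a),\, b-1\rangle\bigr),
\]
where \(\zeta\) is the zigzag map above. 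This is a composite of injections, and injectivity on \(\Q\) follows from uniqueness of the reduced form.

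The main obstacle is the \(\Q\) step. One must ensure that extracting numerator and denominator is a genuine function on the quotient, not a choice of representative, and that the pairing is demonstrably injective. I would handle the first point by appealing to the normalized-representation presentation of \(\Q\) in the metatheory, as in Lean's \texttt{Rat}, where numerator and denominator are fields. The second point follows from the \(2\)-adic argument. Everything else is routine composition of injections with \(\nu\).
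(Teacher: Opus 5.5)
Your proposal is correct and matches the paper's proof essentially step for step. You use \(\nu\) for \(\N\), the zigzag encoding \(e_{\mathbb Z}\) followed by \(\nu\) for \(\Z\), and for \(\Q\) the Euclidean reduced representative \((a_q,b_q)\) sent to \(\nu\bigl(\langle e_{\mathbb Z}(a_q),\,b_q-1\rangle\bigr)\) with the Cantor pairing. Your alternative pairing \(2^m(2n+1)-1\) would work equally well, with the \(2\)-adic argument taking the place of the paper's diagonal-interval argument for injectivity.
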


\begin{proof}
By Lemma~\ref{lem:display}, we have
\[
D\circ\nu=\mathrm{id}_{\N},
\]
so
\[
\nu\colon\N\longrightarrow\Ndel
\]
is injective. Hence, by Definition~\ref{def:encodable}, \(\N\) is
\(\delta\)-encodable.

Let us define
\(
e_{\mathbb Z}\colon\Z\longrightarrow\N
\)
by
\[
e_{\mathbb Z}(z)=
\begin{cases}
2z, & z\geq 0,\\
-2z-1, & z<0.
\end{cases}
\]
The map $e_{\mathbb Z}$ is injective and therefore
\[
\nu\circ e_{\mathbb Z}\colon\Z\longrightarrow\Ndel
\]
is injective, so $\Z$ is $\delta$-encodable.

{We take the metatheoretic rationals as the quotient of pairs
\((a,b)\in\Z\times\N\), with \(b>0\), by
\[
(a,b)\sim(a',b')
\quad\Longleftrightarrow\quad
ab'=a'b.
\]
Every class has a unique reduced representative \((a_q,b_q)\), with
\(\gcd(|a_q|,b_q)=1\). It is obtained by the Euclidean algorithm, so no
choice is used. For \(q\in\Q\), write
\[
q=\frac{a_q}{b_q}.
\]}

Let us define
\(
\langle-,-\rangle\colon\N\times\N\longrightarrow\N
\)
by\[
\langle m,n\rangle
=
\frac{(m+n)(m+n+1)}{2}+n.
\]
This map is injective. Indeed, the pairs satisfying $m+n=k$ are mapped
bijectively onto the  interval
\[
\left[
\frac{k(k+1)}{2},
\frac{(k+1)(k+2)}{2}
\right).
\]

Now, we define
\[
e_{\mathbb Q}(q)
=
\bigl\langle e_{\mathbb Z}(a_q),\,b_q-1\bigr\rangle.
\]
The injectivity of the pairing and of \(e_{\mathbb Z}\) shows that
\(e_{\mathbb Q}\) is injective.
 Hence
\[
\nu\circ e_{\mathbb Q}\colon\Q\longrightarrow\Ndel
\]
is injective, so $\Q$ is $\delta$-encodable. This
completes the proof of Theorem \ref{thm:construction}\((b)\).
\end{proof}


\section{Recognition quotients of the generated orbit}\label{sec:recognition}

{Recognition Geometry~\cite{rg} defines a recognizer as a function
\(
R\colon C\to E
\)
from a configuration space \(C\) to an event space \(E\). It induces the relation
\(
c_1\sim_R c_2
\quad\Longleftrightarrow\quad
R(c_1)=R(c_2),
\)
and the corresponding recognition quotient \(C/{\sim_R}\).

Here we consider the algebraic case in which the configuration space is the generated \(\delta\)-orbit and a recognizer is a monoid homomorphism
\[C=\Ndel,\quad
R\colon(\Ndel,+,0)\longrightarrow(V,\ast,e),
\]
where \((V,\ast,e)\) is a monoid. In this case, the induced relation is the kernel congruence of \(R\), and the quotient \(\Ndel/{\sim_R}\) is isomorphic to \(\operatorname{im}(R)\).

The number tower is one construction from \(\Ndel\). In this section we study another family of constructions given by the homomorphic images of the additive monoid \((\Ndel,+,0)\). Their classification follows from the classical classification of congruences on a monogenic monoid~\cite{birkhoff,cliffordpreston}. The new part of the paper is the constructive {metatheoretic} pricing of this classification in Section~\ref{sec:classificationledger}.
}

The additive monoid \((\Ndel,+,0)\) is generated by \(S0\), the one-step
record.   The following proposition is the standard universal property of the free monoid on one generator. It shows that every recognizer is determined by the image of \(S0\).

\begin{proposition}\label{prop:freemonoid}
Let \((V,\ast,e)\) be a monoid and let \(v\in V\). There exists a unique
monoid homomorphism
\[
r_v\colon(\Ndel,+,0)\longrightarrow(V,\ast,e)
\]
such that
\(
r_v(S0)=v.
\)
\end{proposition}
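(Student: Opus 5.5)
For existence, I will define \(r_v\) by structural recursion on the generated orbit, exactly as in the recursion principle of Proposition~\ref{prop:orbitlaws}. Concretely, I take the \(\delta\)-algebra \((V,e,x\mapsto x\ast v)\) and let \(r_v:=\operatorname{rec}\) from Theorem~\ref{thm:initiality}. This gives
\[
r_v(0)=e,\qquad r_v(Sn)=r_v(n)\ast v .
\]
Then
\[
r_v(S0)=e\ast v=v .
\]

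It remains to check that \(r_v\) is a monoid homomorphism, that is, \(r_v(x+y)=r_v(x)\ast r_v(y)\). I will prove this by structural induction on \(y\), using the recursion equations of Definition~\ref{def:natops}. For \(y=0\), both sides equal \(r_v(x)\), since \(x+0=x\) and \(r_v(x)\ast e=r_v(x)\). For the successor step,
\[
r_v(x+Sy)=r_v(S(x+y))=r_v(x+y)\ast v .
\]
By the induction hypothesis, this equals \((r_v(x)\ast r_v(y))\ast v\). Associativity in \(V\) turns it into \(r_v(x)\ast(r_v(y)\ast v)=r_v(x)\ast r_v(Sy)\). Preservation of the unit holds by definition.

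For uniqueness, let \(h\) be any monoid homomorphism with \(h(S0)=v\). The key observation is that \(Sn=n+S0\). This follows from the recursion equations: \(n+S0=S(n+0)=Sn\). Hence
\[
h(Sn)=h(n)\ast h(S0)=h(n)\ast v .
\]
Together with \(h(0)=e\), this shows that \(h\) is a \(\delta\)-algebra homomorphism into \((V,e,x\mapsto x\ast v)\). The uniqueness clause of Theorem~\ref{thm:initiality} then gives \(h=r_v\). Alternatively, one can run a direct structural induction on \(n\).

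The only point requiring care is choosing the orientation of the recursion, \(x\mapsto x\ast v\) rather than \(v\ast x\), so that it matches the right-recursive definition \(x+Sy=S(x+y)\). No commutativity of \(V\) is needed for this, and no nonconstructive principles are used.
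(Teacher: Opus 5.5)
Your proposal is correct and matches the paper's proof: the same recursion $r_v(0)=e$, $r_v(Sx)=r_v(x)\ast v$, the same structural induction on $y$ for additivity, and the same identity $Sx=x+S0$ for uniqueness. The only difference is cosmetic. You obtain existence and uniqueness from Theorem~\ref{thm:initiality} applied to the $\delta$-algebra $(V,e,x\mapsto x\ast v)$, whereas the paper runs the structural induction on $x$ directly.
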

{\begin{proof}
Let us define \(r_v\) by
\[
r_v(0)=e,
\qquad
r_v(Sx)=r_v(x)\ast v.
\]
A structural induction on \(y\) gives
\[
r_v(x+y)=r_v(x)\ast r_v(y),
\]
so \(r_v\) is a monoid homomorphism. Moreover,
\(
r_v(S0)=r_v(0)\ast v=e\ast v=v.
\)

Let \(r\) be another monoid homomorphism with \(r(S0)=v\). Since \(r\) preserves the neutral element, \(
r(0)=e=r_v(0).
\) Also, we have
\[
r(Sx)=r(x+S0)=r(x)\ast v.
\]
Structural induction on \(x\) gives \(r(x)=r_v(x)\) for every
\(x\in\Ndel\).
\end{proof}
}

The relation
\[
x\sim_r y
\quad\Longleftrightarrow\quad
r(x)=r(y)
\]
is called the \emph{indistinguishability relation} of \(r\). Since \(r\) is a
monoid homomorphism, this relation is a monoid congruence on
\((\Ndel,+,0)\).  Algebraically, it is the \emph{kernel congruence} of \(r\).
Two elements \(x,y\in\Ndel\) are called \emph{\(r\)-indistinguishable} if
\(x\sim_r y\). The quotient
\(
\Ndel/{\sim_r}
\)
is called the \emph{recognition quotient} of \(r\).
\smallskip
  
We now classify these kernel congruences.


\begin{definition}\label{def:ip}
For \(i\geq0\) and \(p\geq1\), define a relation
\(\equiv_{i,p}\) on \(\Ndel\) by
\[
x\equiv_{i,p}y
\quad\Longleftrightarrow\quad
x=y
\quad\text{or}\quad
\bigl(D(x),D(y)\geq i
\ \text{and}\quad
D(x)\equiv D(y)\pmod p\bigr).
\]
\end{definition}

{\begin{lemma}\label{lem:indexperiod}
For every \(i\geq0\) and \(p\geq1\), the relation
\(\equiv_{i,p}\) is a decidable congruence on
\((\Ndel,+,0)\).
\end{lemma}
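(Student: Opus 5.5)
We prove the three properties of \(\equiv_{i,p}\) in the order decidability, equivalence, and compatibility with addition. We transport everything through the monoid isomorphism \(D\colon(\Ndel,+,0)\to(\N,+,0)\) of Lemma~\ref{lem:adddisplay}. Write \(a=D(x)\) and \(b=D(y)\). By injectivity of \(D\) (Lemma~\ref{lem:display}), \(x=y\) holds if and only if \(a=b\). Hence \(x\equiv_{i,p}y\) is equivalent to the relation on \(\N\) given by \(a=b\) or (\(a,b\geq i\) and \(a\equiv b \pmod p\)). It therefore suffices to show that this relation on \(\N\) is a decidable congruence of \((\N,+,0)\), and then pull it back along \(D\).

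\emph{Decidability.} Equality on \(\N\) is decidable, and so is the order \(a\geq i\). Congruence modulo \(p\geq1\) is decidable, since it amounts to comparing \(a \bmod p\) with \(b\bmod p\), which are computed by the division algorithm. A finite Boolean combination of decidable propositions is decidable by case analysis on the decision disjunctions, exactly as in Lemma~\ref{lem:qfdec}. No EM, LPO, or MP is used.

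\emph{Equivalence relation.} Reflexivity holds through the first disjunct. Symmetry holds because both disjuncts are symmetric. For transitivity, suppose \(a\equiv_{i,p}b\) and \(b\equiv_{i,p}c\), and split into the four cases given by the two disjunctions. If either relation is an equality, substitute it into the other. If both are of the second kind, then \(a,b,c\geq i\), and \(a\equiv b\equiv c\pmod p\) gives \(a\equiv c\pmod p\), using transitivity of congruence modulo \(p\). Each case is selected by the given disjunctions, so no excluded middle is involved.

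\emph{Compatibility with addition.} Since \(+\) is commutative by Proposition~\ref{prop:natarith}, it suffices to show that \(a\equiv_{i,p}b\) implies \(a+c\equiv_{i,p}b+c\). If \(a=b\), the conclusion is an equality. Otherwise \(a,b\geq i\) and \(p\mid (a-b)\). Then \(a+c,\,b+c\geq i\) because \(c\geq0\), and \((a+c)-(b+c)=a-b\), so \(a+c\equiv b+c\pmod p\). Combining compatibility in each argument with transitivity gives full compatibility: from \(x\equiv x'\) and \(y\equiv y'\) we get \(x+y\equiv x'+y\equiv x'+y'\).

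We expect no real obstacle. The only point needing care is the transport: \(\equiv_{i,p}\) is defined using both \(x=y\) in \(\Ndel\) and \(D\), and the addition identity \(D(x+z)=D(x)+D(z)\) from Lemma~\ref{lem:adddisplay} must be used to rewrite the condition on \(x+z\) as a condition on \(D(x)+D(z)\). The bound \(D(x+z)\geq i\) also uses \(D(z)\geq 0\), which is immediate in \(\N\).
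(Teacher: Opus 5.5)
Your proposal is correct and takes essentially the same route as the paper: you verify that the relation on \(\N\) is a decidable additive congruence by case analysis on the defining disjunctions, then transfer it along the monoid isomorphism \(D\). Your extra step deriving two-sided compatibility from commutativity and transitivity is harmless, since the paper only uses the translation law \(x\sim y\Rightarrow x+t\sim y+t\). Your appeal to Lemma~\ref{lem:qfdec} is only an analogy: that lemma concerns derivations in the \(\delta\)-calculus, whereas here decidability is meant in the metatheory, where case analysis on the decision disjunctions suffices.
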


\begin{proof}
Let us consider the relation on \(\N\) given by
\(
m\equiv_{i,p}n
\)
if either \(m=n\), or
\[
m,n\geq i
\qquad\text{and}\qquad
m\equiv n\pmod p.
\]

Reflexivity follows from the equality case, and symmetry is immediate. For
transitivity, suppose
\[
m\equiv_{i,p}n
\qquad\text{and}\qquad
n\equiv_{i,p}k.
\]
If \(m=n\) or \(n=k\), the conclusion follows immediately. Otherwise,
\[
m,n,k\geq i,
\qquad
m\equiv n\pmod p,
\qquad
n\equiv k\pmod p.
\]
Hence
\(
m\equiv k\pmod p,
\)
and therefore \(m\equiv_{i,p}k\).

Now suppose \(m\equiv_{i,p}n\) and let \(t\in\N\). Equality is preserved by
addition. In the second case,
\[
m+t,n+t\geq i
\]
and
\[
m+t\equiv n+t\pmod p.
\]
Thus
\[
m+t\equiv_{i,p}n+t.
\]
Hence the relation is compatible with addition.

It is decidable because equality, order, and congruence modulo \(p\) are
decidable on \(\N\).  By the monoid isomorphism
\[
D\colon(\Ndel,+,0)\longrightarrow(\N,+,0)
\]
we get a decidable congruence on \(\Ndel\).
\end{proof}}

\begin{theorem}\label{thm:catalogue}
Assume \(\mathrm{EM}\). Every congruence on \((\Ndel,+,0)\) is either
equality or \(\equiv_{i,p}\) for a unique pair
\[
i\geq0,
\qquad
p\geq1.
\]
\end{theorem}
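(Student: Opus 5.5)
We transfer the problem to $(\N,+,0)$ along the monoid isomorphism $D$ of Lemma~\ref{lem:adddisplay}. A congruence $c$ on $(\Ndel,+,0)$ corresponds to the congruence $c'$ on $\N$ defined by $m\,c'\,n \iff \nu(m)\,c\,\nu(n)$. By Lemma~\ref{lem:display}, $c=\equiv_{i,p}$ on $\Ndel$ if and only if $c'$ is the relation $\equiv_{i,p}$ on $\N$ used in the proof of Lemma~\ref{lem:indexperiod}. It therefore suffices to classify congruences on $\N$.

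\textbf{Dichotomy.} We apply EM to the proposition $\exists m\neq n,\ m\,c'\,n$. If it fails, $c'$ is equality. Otherwise we have related $m<n$. EM then gives a least index $i$ such that $i\,c'\,(i+q)$ for some $q\geq 1$. This uses the least-number principle, which follows from EM by the usual argument, or by bounded search once a witness exists together with decidability of $c'$ obtained from EM. For this $i$, let $p$ be the least $q\geq1$ with $i\,c'\,(i+q)$.

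\textbf{Identification of $c'$.} We show $c'=\equiv_{i,p}$ in two steps.

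First, $\equiv_{i,p}\subseteq c'$. Adding $t$ to $i\,c'\,(i+p)$ gives $(i+t)\,c'\,(i+t+p)$. Iterating and using transitivity yields $(i+t)\,c'\,(i+t+kp)$ for all $k$. Hence any $m,n\geq i$ with $m\equiv n \pmod p$ are $c'$-related.

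Second, $c'\subseteq\equiv_{i,p}$. This is the main obstacle. Suppose $m\,c'\,n$ with $m<n$.

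If $m<i$, write $n=m+q$ with $q\geq 1$; then $m\,c'\,(m+q)$ contradicts the minimality of $i$.

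If $m\geq i$, reduce $m$ and $n$ into the cycle $\{i,\dots,i+p-1\}$ using the first step. This gives $a\,c'\,b$ with $i\leq a<b<i+p$. Adding $i+p-b$ to both sides gives $(a+i+p-b)\,c'\,(i+p)\,c'\,i$. Hence $i\,c'\,(i+(a+p-b))$ with $1\leq a+p-b<p$, contradicting the minimality of $p$.

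Both cases use negative reasoning, so EM, or the decidability it provides, is used freely here.

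\textbf{Uniqueness.} Suppose $\equiv_{i,p}=\equiv_{i',p'}$. The index $i$ is recovered as the least element related to some other element: nothing below $i$ is related to anything distinct from itself, while $i\equiv_{i,p} i+p$. The period $p$ is then the least $q\geq1$ with $i\equiv_{i,p} i+q$. Both are determined by the relation alone, so $i=i'$ and $p=p'$.

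Finally, we transport the classification back to $\Ndel$ via $D$ and $\nu$.
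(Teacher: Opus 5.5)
Your proof is correct, but it performs the two minimizations in the opposite order from the paper. The paper, through Theorem~\ref{thm:catclassical} and Proposition~\ref{prop:core}, starts from a related pair $u<v$. It takes $g$ to be the least positive period at $u$, and then $i$ to be the least point at which $g$ is a period. Identifying $c$ with $\equiv_{i,g}$ then needs the descent lemmas (Lemmas~\ref{lem:descent} and~\ref{lem:gdesc}: a positive period at a higher point descends to any lower point admitting some positive period) and the divisibility Lemma~\ref{lem:div}.

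You instead take $i$ to be the least point admitting any positive period, and $p$ to be the least positive period at $i$. The inclusion $c'\subseteq\equiv_{i,p}$ then follows from these two minimality properties together with your shift argument inside the cycle $\{i,\dots,i+p-1\}$. No descent or divisibility lemma is needed, so your argument is shorter and more self-contained.

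What the paper's order buys is constructive reuse. Both of its searches are bounded ($g\le v-u$ and $i\le u$) over predicates that are decidable whenever $c$ is. So the same Proposition~\ref{prop:core} also yields the EM-free Theorem~\ref{thm:catforced} and the Markov form, and EM enters Theorem~\ref{thm:catclassical} only at the dichotomy and the least-number principle.

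In your order, the predicate $n\mapsto\exists q\geq1\,\bigl(n\,c'\,(n+q)\bigr)$ contains an unbounded quantifier, so it is not decidable from decidability of $c'$ alone. Your parenthetical about bounded search therefore works only under EM, or after proving exactly the descent lemma. That lemma shows that, for $n\le m$, the predicate is equivalent to $n\,c'\,(n+q_0)$, where $q_0$ is a fixed positive period at $m$. For the theorem as stated, which assumes EM, this is harmless.

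One small point of exposition. When you reduce $m,n$ to residues $a,b$ in the cycle, the case $a=b$ is exactly $m\equiv n\pmod p$, which is the desired conclusion. Split it off first (it is decidable), and swap $a$ and $b$ if necessary before assuming $a<b$. With that split, the remaining cases are contradictions with minimality. Then EM is not needed beyond the dichotomy and the choice of the least $i$ and $p$.
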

{ \begin{proof}
The required classification on \((\N,+,0)\) is proved in Section~\ref{sec:classificationledger}. By Theorem~\ref{thm:catclassical}, every congruence on \((\N,+,0)\) is either equality or an index-period congruence. The monoid isomorphism
\(D\colon(\Ndel,+,0)\longrightarrow(\N,+,0)
\)
gives the result on \((\Ndel,+,0)\).
\end{proof}
}

 {\begin{definition}\label{def:mip}
For \(i\geq0\) and \(p\geq1\), {\it the index-period monoid} is defined by
\[
M(i,p):=\Ndel/{\equiv_{i,p}}.
\]
\end{definition}

\begin{proposition}\label{prop:mip}
The index-period monoid \(M(i,p)\) is finite and monogenic, generated by the class
\([S0]\). It has index \(i\), period \(p\), and  \(i+p\) elements.
\end{proposition}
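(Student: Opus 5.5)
We transport the problem along the monoid isomorphism \(D\colon(\Ndel,+,0)\to(\N,+,0)\) of Lemma~\ref{lem:adddisplay}. By Lemma~\ref{lem:indexperiod}, \(D\) carries \(\equiv_{i,p}\) on \(\Ndel\) to the relation \(\equiv_{i,p}\) on \(\N\). It therefore induces a monoid isomorphism \(M(i,p)\cong\N/{\equiv_{i,p}}\), under which the class \([S0]\) corresponds to the class \([1]\). So it suffices to prove the statement for \(\N/{\equiv_{i,p}}\).

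Monogenicity comes first. Every \(x\in\Ndel\) equals \(S^{D(x)}0\) by Lemma~\ref{lem:display}. Hence, by Proposition~\ref{prop:freemonoid} (or a direct induction), \(x\) is the \(D(x)\)-fold sum of \(S0\). Consequently every class is a power of \([S0]\), with the class of \(0\) as the empty sum.

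Next comes the explicit list of representatives. We show that the map
\[
\{0,1,\dots,i+p-1\}\longrightarrow\N/{\equiv_{i,p}},\qquad m\longmapsto[m],
\]
is a bijection.

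For surjectivity, take \(n\in\N\). If \(n<i+p\), it represents itself. If \(n\geq i+p\), let \(m\) be the unique number in \([i,i+p)\) with \(m\equiv n\pmod p\), namely \(m=i+((n-i)\bmod p)\). Both \(m\) and \(n\) are at least \(i\), so \(m\equiv_{i,p}n\). This is a bounded computation, with no choice involved.

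For injectivity, let \(m\neq m'\) lie in \([0,i+p)\) with \(m\equiv_{i,p}m'\). By the definition, both are at least \(i\) and \(m\equiv m'\pmod p\). Then \(|m-m'|<p\), and a nonzero multiple of \(p\) cannot be that small, so \(m=m'\), a contradiction. Since \(\equiv_{i,p}\) is decidable, this case split is constructive.

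It follows that the quotient has exactly \(i+p\) elements, and in particular it is finite.

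Finally we read off the index and the period. In a monogenic monoid generated by \(g=[1]\), the powers \(g^n=[n]\) are pairwise distinct for \(n<i+p\) by the injectivity above. Also \(g^{i+p}=g^i\), because \(i+p\equiv_{i,p}i\). Recall the standard definitions: the index is the least \(k\) such that \(g^k=g^{k+q}\) for some \(q\geq1\), and the period is the least such \(q\) for that \(k\).

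The pair \((i,p)\) realizes this repetition. Any smaller repetition \(g^k=g^{k+q}\) with \(k+q<i+p\) would identify two distinct representatives from our list, which is impossible. If \(k<i\), then \(g^k=g^{k+q}\) would force \(k=k+q\) by the definition of \(\equiv_{i,p}\), since \(k<i\). Hence the index is exactly \(i\). For \(k=i\), the condition \(g^i=g^{i+q}\) holds if and only if \(p\mid q\), so the least such \(q\) is \(p\).

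The main obstacle is bookkeeping rather than mathematics. The work lies in fixing the precise convention for index and period and stating the minimality checks cleanly. The injectivity of the representative list is the key step, and everything else reduces to it.
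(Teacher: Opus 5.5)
Your proof is correct and follows the paper's argument. The classes of $0,S0,\ldots,S^{i+p-1}0$ (equivalently $0,\ldots,i+p-1$ after transport along $D$) are pairwise inequivalent because two distinct representatives in $[i,i+p)$ differ by less than $p$, every $x$ with $D(x)\geq i+p$ is equivalent to the representative $i+((D(x)-i)\bmod p)$, and $[S^{i+p}0]=[S^{i}0]$; your explicit minimality checks for the index and the period are details the paper leaves implicit, not a different route.
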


\begin{proof}
The classes
\[
[0],[S0],\ldots,[S^{i+p-1}0]
\]
are distinct, and every element of \(\Ndel\) is equivalent to one of them: for \(0\leq a<b\leq i+p-1\), the records \(S^{a}0\) and \(S^{b}0\) are neither equal nor both at least \(i\) with difference divisible by \(p\); and if \(D(x)\geq i+p\), then \(D(x)=i+qp+r\) with \(0\leq r<p\), whence \(x\equiv_{i,p}S^{i+r}0\). 
Moreover,
\[
[S^{i+p}0]=[S^i0].
\]
Hence \(M(i,p)\) has \(i+p\) elements, is generated by \([S0]\), and has
index \(i\) and period \(p\).
\end{proof}
}

{\begin{theorem}\label{thm:recognition}
Assume \(\mathrm{EM}\). Let
\(
r\colon(\Ndel,+,0)\longrightarrow(V,\ast,e)
\)
be a recognizer. Then exactly one of the following holds:
\begin{enumerate}[label=(\alph*)]
\item \(r\) is injective, equivalently \(\sim_r\) is equality. In this case,
\(
\operatorname{im}(r)\cong\Ndel.
\)
\item There exists a unique pair \((i,p)\), with \(i\geq0\) and \(p\geq1\),
such that
\(\sim_r=\equiv_{i,p}.
\)
In this case,
\[
\Ndel/{\sim_r}\cong M(i,p)\cong\operatorname{im}(r).
\]
\end{enumerate}
\end{theorem}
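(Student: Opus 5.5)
The plan is to reduce the statement to Theorem~\ref{thm:catalogue} applied to the kernel congruence \(\sim_r\), and then identify the quotients. First, \(\sim_r\) is a congruence on \((\Ndel,+,0)\): since \(r\) is a monoid homomorphism, \(r(x)=r(y)\) gives \(r(x+t)=r(x)\ast r(t)=r(y)\ast r(t)=r(y+t)\), and similarly on the left. Under \(\mathrm{EM}\), Theorem~\ref{thm:catalogue} then says that \(\sim_r\) is either equality or \(\equiv_{i,p}\) for a unique pair \((i,p)\) with \(i\geq0\), \(p\geq1\). These two alternatives give cases (a) and (b).

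In case (a), \(\sim_r\) being equality means exactly that \(r(x)=r(y)\) implies \(x=y\), so \(r\) is injective. The corestriction \(r\colon\Ndel\to\operatorname{im}(r)\) is then a bijective monoid homomorphism, which gives \(\operatorname{im}(r)\cong\Ndel\).

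In case (b), uniqueness of \((i,p)\) comes directly from Theorem~\ref{thm:catalogue}. I would also note that distinct pairs give distinct relations, using Proposition~\ref{prop:mip}: \(i\) is the least \(D\)-value of an element related to a different element, and \(p\) is the least positive difference at that level. For the isomorphisms, define \(\bar r\colon\Ndel/{\sim_r}\to\operatorname{im}(r)\) by \(\bar r([x])=r(x)\). It is well defined and injective by the definition of \(\sim_r\), surjective onto the image by construction, and a homomorphism because \(r\) is. This is the first isomorphism theorem, and no choice is needed because the quotient map is given. Since \(\sim_r=\equiv_{i,p}\) as relations, the quotients \(\Ndel/{\sim_r}\) and \(M(i,p)\) coincide by Definition~\ref{def:mip}. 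This gives \(\Ndel/{\sim_r}\cong M(i,p)\cong\operatorname{im}(r)\).

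It remains to prove "exactly one", meaning the two cases are mutually exclusive. For this, I will show that \(\equiv_{i,p}\) is never equality: the elements \(S^i0\) and \(S^{i+p}0\) are distinct, since their \(D\)-values differ by \(p\geq1\) and \(D\) is injective by Lemma~\ref{lem:display}, but they are \(\equiv_{i,p}\)-related. So if \(r\) were injective we would get \(S^i0=S^{i+p}0\), a contradiction. I expect this bookkeeping to be the main obstacle: exclusivity and uniqueness have to be derived cleanly from Theorem~\ref{thm:catalogue} and Proposition~\ref{prop:mip}, rather than relying on the informal phrase "exactly one". The isomorphism steps are routine.
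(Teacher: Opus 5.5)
Your proposal is correct and follows essentially the same route as the paper. You apply Theorem~\ref{thm:catalogue} to the kernel congruence $\sim_r$, use the first isomorphism theorem to identify $\Ndel/{\sim_r}$ with $\operatorname{im}(r)$ and hence with $M(i,p)$, and get disjointness from the related but distinct pair $i$, $i+p$ (your $S^i0$, $S^{i+p}0$).
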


\begin{proof}
The relation \(\sim_r\) is the kernel congruence of \(r\). By the first
isomorphism theorem for monoids~\cite{birkhoff,cliffordpreston}, the map
\[
\Ndel/{\sim_r}\longrightarrow\operatorname{im}(r),
\qquad
[x]\longmapsto r(x),
\]
is a monoid isomorphism.

By Theorem~\ref{thm:catalogue}, either \(\sim_r\) is equality or
\(\sim_r=\equiv_{i,p}\) for a unique pair \(i\geq0\), \(p\geq1\). The two cases are disjoint, since
\(i\equiv_{i,p}i+p\) while \(i\neq i+p\) for \(p\geq1\).

In the first case, \(r\) is injective and
\[
\operatorname{im}(r)\cong\Ndel.
\]
In the second case, we have
\[
\Ndel/{\sim_r}=
 \Ndel/{\equiv_{i,p}}
=M(i,p),
\]
and hence
\[
\Ndel/{\sim_r}\cong M(i,p)\cong\operatorname{im}(r).
\]
\end{proof}
}

The map
\(
D\colon(\Ndel,+,0)\longrightarrow(\N,+,0)
\)
is an injective recognizer by Lemma~\ref{lem:adddisplay}.

{Let us define
\[
b\colon(\Ndel,+,0)\longrightarrow(\B,\vee,0),
\qquad
b(0)=0,\qquad b(Sx)=1,
\]
where \(\B=\{0,1\}\). Then \(b\) is a monoid homomorphism with kernel
congruence \(\equiv_{1,1}\). Hence
\[
M(1,1)\cong\B.
\]

For \(p\geq1\), define
\[
\pi_p\colon(\Ndel,+,0)\longrightarrow
(\mathbb Z/p\mathbb Z,+,0),
\qquad
\pi_p(x)=D(x)\bmod p.
\]
Then \(\pi_p\) is a recognizer with kernel congruence
\(\equiv_{0,p}\). Therefore,
\[
M(0,p)\cong\mathbb Z/p\mathbb Z.
\]
In particular, \(M(0,p)\) is a cyclic group.}

{\begin{remark}
This is an algebraic case of the recognition quotient construction of
Recognition Geometry~\cite{rg}. Here the configuration space is the generated
\(\delta\)-orbit, and recognizers are monoid homomorphisms. The homomorphism
condition is essential for the index-period classification. If \(R\) is an
arbitrary function, then every equivalence relation \(E\) on \(\Ndel\) is the kernel of the quotient map
\[
\Ndel\longrightarrow\Ndel/E,
\qquad
x\longmapsto[x].
\]
In that case, no index-period classification holds.
\end{remark}

}

\section{The metatheoretic logical price of the index-period classification}
\label{sec:classificationledger}

The classification of Section~\ref{sec:recognition} has three forms. The first requires none of EM, LPO, or MP, the second requires MP, and the third requires EM. We prove these forms and their exact
logical prices below.

\begin{convention}\label{conv:classificationprice}
The { metatheoretic} logical prices in this section are computed in the ambient
intuitionistic metatheory. Here \(\mathrm{MP}\) denotes Markov's principle
for arbitrary decidable predicates on \(\N\), and \(\mathrm{EM}\) denotes
excluded middle for arbitrary ambient propositions. 
These metatheoretic logical prices are separate from the ledger of
derivations defined in Section~\ref{ssec:ledger}.
\end{convention}

\subsection{Periods and index-period congruences}
\label{ssec:classificationcore}
 
By Lemmas~\ref{lem:display} and~\ref{lem:adddisplay}, the map \(D\)
identifies congruences on \((\Ndel,+,0)\) with additive congruences on
\((\N,+,0)\), and carries the relation \(\equiv_{i,p}\) of
Definition~\ref{def:ip} to the index-period relation on \(\N\), which we
denote by the same symbol. In this section, \(c\) is an additive congruence
on \((\N,+,0)\), and we write \(x\sim y\) for \(c(x,y)\). Hence the results proved below for additive congruences on \((\N,+,0)\)
are equivalent, under the monoid isomorphism \(D\), to the corresponding
results for congruences on \((\Ndel,+,0)\).

\begin{definition}\label{def:period}
Let \(n,e\in\N\). We say that \(e\) is a \emph{period at \(n\)} if
\[
n\sim n+e.
\]
The period is \emph{positive} if \(e>0\).
\end{definition}

Since \(c\) is an additive congruence,
\[
x\sim y\quad\Longrightarrow\quad x+t\sim y+t
\]
for every \(t\in\N\).

\begin{lemma}\label{lem:raise}
If \(e\) is a period at \(n\) and \(n\leq m\), then \(e\) is a period at
\(m\).
\end{lemma}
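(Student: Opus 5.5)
The statement follows directly from translation invariance of the additive congruence \(c\). Since \(e\) is a period at \(n\), we have \(n\sim n+e\). Given \(n\leq m\), set \(t:=m-n\in\N\); this subtraction is defined because order on \(\N\) is decidable, and it satisfies \(n+t=m\). We will not need any case split on whether \(n<m\) or \(n=m\), since the argument is uniform in \(t\). In particular, no principle beyond the intuitionistic metatheory is used.

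Compatibility of \(c\) with addition, \(x\sim y\Rightarrow x+t\sim y+t\), applied to \(n\sim n+e\) gives
\[
n+t\sim (n+e)+t .
\]
Rewriting with associativity and commutativity of addition on \(\N\), we have \((n+e)+t=(n+t)+e=m+e\), and the left side is \(m\). Hence \(m\sim m+e\), which says exactly that \(e\) is a period at \(m\) in the sense of Definition~\ref{def:period}.

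There is no genuine obstacle here. The only points to handle with care are obtaining the witness \(t\) with \(n+t=m\) from \(n\leq m\), and the rewriting \((n+e)+t=m+e\). The latter is purely an equality of natural numbers, so it transfers along \(c\) by reflexivity and transitivity of the equivalence relation \(c\).
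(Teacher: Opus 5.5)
Your proof is correct and is the same as the paper's: add $t=m-n$ to $n\sim n+e$ using compatibility with addition, then rewrite $n+t=m$ and $(n+e)+t=m+e$. One small remark is that truncated subtraction on $\N$ is always defined, so the argument does not need decidability of order, only the identity $n+(m-n)=m$ for $n\leq m$.
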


\begin{proof}
Since \(n\leq m\), we have \(m=n+(m-n)\). By adding \(m-n\) to
\[
n\sim n+e
\]
we have
\[
m=n+(m-n)\sim n+e+(m-n)=m+e.
\]
Thus \(e\) is a period at \(m\).
\end{proof}

\begin{lemma}\label{lem:mult}
If \(e\) is a period at \(n\), then \(ke\) is a period at \(n\) for every
\(k\in\N\).
\end{lemma}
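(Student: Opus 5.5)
The plan is to argue by induction on \(k\in\N\), using only reflexivity, transitivity, and additive compatibility of the congruence \(c\), together with Lemma~\ref{lem:raise}.

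\emph{Base case \(k=0\).} Here \(ke=0\), and we need \(n\sim n+0=n\), which is reflexivity of \(c\).

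\emph{Successor step.} Assume \(ke\) is a period at \(n\), that is, \(n\sim n+ke\). We want \(n\sim n+(k+1)e=(n+ke)+e\).
\begin{itemize}
\item Since \(n\leq n+ke\), Lemma~\ref{lem:raise} applied to the period \(e\) at \(n\) gives \(n+ke\sim n+ke+e\).
\item Chaining this with the induction hypothesis \(n\sim n+ke\) by transitivity yields \(n\sim n+(k+1)e\).
\end{itemize}
Alternatively, one can add \(ke\) to both sides of \(n\sim n+e\) via additive compatibility, which gives \(n+ke\sim n+ke+e\) directly.

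The induction is on the metatheoretic natural number \(k\) with an ambient predicate. No case split on \(c\) is required, so no EM, LPO, or MP is used. There is no real obstacle; the only care needed is the arithmetic identity \(n+(k+1)e=(n+ke)+e\), which holds in \(\N\).
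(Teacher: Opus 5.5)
Your proof is correct and matches the paper's argument: induction on \(k\), reflexivity for \(k=0\), and in the successor step Lemma~\ref{lem:raise} gives \(n+ke\sim n+(k+1)e\), which is chained with the induction hypothesis by transitivity. Your alternative of adding \(ke\) directly to \(n\sim n+e\) is simply the proof of Lemma~\ref{lem:raise} unfolded.
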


\begin{proof}
We use induction on \(k\). For \(k=0\), we have \(n\sim n\).
Suppose that
\[
n\sim n+ke.
\]
By Lemma~\ref{lem:raise}, \(e\) is a period at \(n+ke\), so
\[
n+ke\sim n+(k+1)e.
\]
By transitivity, 
\(
n\sim n+(k+1)e.
\)
\end{proof}

\begin{lemma}\label{lem:descent}
Let \(d>0\) be a period at \(u\), and let \(g>0\) be a period at \(v\), where
\(u\leq v\). Then \(g\) is a period at \(u\).
\end{lemma}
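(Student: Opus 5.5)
Since $d>0$ is a period at $u$ and $u\le v$, the plan is to raise the period $d$ to points at or beyond $v$ and then use the period structure there to push $g$ back down to $u$. Concretely, $u \sim u + kd$ for every $k\in\N$ by Lemma~\ref{lem:mult}. I will pick $k$ large enough that $u+kd\geq v$; for instance $k=v$ works, since $d\geq 1$ gives $u+vd\geq u+v\geq v$. This choice is explicit, so no search or nonconstructive principle is needed.

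Next, $g$ is a period at $v$ and $v\le u+kd$, so Lemma~\ref{lem:raise} makes $g$ a period at $u+kd$:
\[
u+kd\sim u+kd+g.
\]
Separately, I add $g$ to the relation $u\sim u+kd$, using compatibility of $c$ with addition:
\[
u+g\sim u+kd+g.
\]

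The argument then closes by chaining these relations:
\[
u\sim u+kd\sim u+kd+g\sim u+g,
\]
where the last step is symmetry applied to the second relation. Transitivity gives $u\sim u+g$, so $g$ is a period at $u$.

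The only step needing care is choosing $k$ with $u+kd\ge v$. This is where the hypothesis $d>0$ is used, and the explicit bound $k=v$ handles it. Everything else is direct application of Lemmas~\ref{lem:raise} and~\ref{lem:mult} together with the congruence axioms, so the proof uses none of EM, LPO, or MP.
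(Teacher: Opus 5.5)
Your proof is correct and matches the paper's argument: raise $d$ to a multiple $kd$ with $u+kd\geq v$, lift $g$ to $u+kd$ by Lemma~\ref{lem:raise}, add $g$ to $u\sim u+kd$, and chain. The only difference is the multiplier, $k=v$ instead of the paper's $k=v-u+1$; both give $u+kd\geq v$, and with your choice the hypothesis $u\leq v$ is not even needed.
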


\begin{proof}
Let 
\(
k=v-u+1.
\)
Since \(d\geq1\), we have
\[
v\leq u+kd.
\]
By Lemma~\ref{lem:mult}, we get
\[
u\sim u+kd.
\]
Set \(w=u+kd\). Since \(v\leq w\), Lemma~\ref{lem:raise} gives
\[
w\sim w+g.
\]
Hence
\[
u\sim w\sim w+g.
\]
By adding \(g\) to both sides in the previous equation, we have
\[
u+g\sim w+g.
\]
By symmetry and transitivity,
\(
u\sim u+g.
\)

\end{proof}

\begin{lemma}\label{lem:gdesc}
Let \(g>0\) be a period at some point. If \(x<y\) and \(x\sim y\), then \(g\)
is a period at \(x\).
\end{lemma}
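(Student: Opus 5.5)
The plan is to observe that \(x\sim y\) with \(x<y\) says exactly that \(d:=y-x>0\) is a positive period at \(x\). Writing \(v\) for the point at which \(g\) is a period, the claim then reduces to Lemma~\ref{lem:descent}, after a comparison of \(x\) with \(v\). Since order on \(\N\) is decidable, I will split into the cases \(x\leq v\) and \(v<x\). This case split uses no nonconstructive principle.

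\emph{Case \(x\leq v\).} Here \(d>0\) is a period at \(u=x\), and \(g>0\) is a period at \(v\) with \(u\leq v\). Lemma~\ref{lem:descent} applies directly and gives that \(g\) is a period at \(x\).

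\emph{Case \(v<x\).} Here \(g\) is a period at \(v\leq x\), so Lemma~\ref{lem:raise} shows that \(g\) is a period at \(x\). The hypothesis \(x\sim y\) is not needed in this case.

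The only point needing care is the rewriting \(y=x+(y-x)\), which turns \(x\sim y\) into the statement that \(y-x\) is a period at \(x\) in the sense of Definition~\ref{def:period}. Positivity of \(y-x\) follows from \(x<y\), as Lemma~\ref{lem:descent} requires. No real obstacle is expected. The argument is a bounded case analysis on decidable order, together with the two earlier lemmas, so the resulting proof uses none of EM, LPO, or MP.
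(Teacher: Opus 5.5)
Your proof is correct and uses the same ingredients as the paper: \(y-x\) is a positive period at \(x\), combined with Lemmas~\ref{lem:raise} and~\ref{lem:descent}. The only difference is minor: instead of splitting on \(x\leq v\) versus \(v<x\), the paper first applies Lemma~\ref{lem:raise} to make \(g\) a period at \(v+x\geq x\), and then applies Lemma~\ref{lem:descent} once, so no case distinction is needed.
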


\begin{proof}
Let \(a\) be a point at which \(g\) is a period, and let
\(
d=y-x.
\)
Then \(d>0\) and
\[
x\sim x+d=y,
\]
so \(d\) is a period at \(x\). By Lemma~\ref{lem:raise}, \(g\) is a period
at \(a+x\). Since \(x\leq a+x\), Lemma~\ref{lem:descent}, applied to the
period \(d\) at \(x\) and the period \(g\) at \(a+x\), shows that \(g\) is a
period at \(x\).
\end{proof}

 \begin{lemma}\label{lem:residue}
Let \(g>0\) be a period at \(i\). If
\[
i\leq x,\qquad i\leq y,\qquad x\equiv y\pmod g,
\]
then \(x\sim y\).
\end{lemma}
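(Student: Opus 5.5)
Without loss of generality assume \(x\leq y\); the case \(y\leq x\) follows by symmetry of \(c\). Since \(x\equiv y\pmod g\) and \(x\leq y\), the difference \(y-x\) is a multiple of \(g\), so we can write \(y=x+kg\) for some \(k\in\N\). This uses only decidable arithmetic on \(\N\), namely division with remainder, so no nonconstructive principle enters.

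The plan is then to transport the period from \(i\) up to \(x\) and iterate it. Because \(g\) is a period at \(i\) and \(i\leq x\), Lemma~\ref{lem:raise} shows that \(g\) is a period at \(x\), that is, \(x\sim x+g\). Lemma~\ref{lem:mult} applied at the point \(x\) then gives \(x\sim x+kg=y\), which is the desired conclusion.

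Alternatively, one can avoid the intermediate point \(x\). By Lemma~\ref{lem:mult}, \(kg\) is a period at \(i\), and Lemma~\ref{lem:raise} moves that period from \(i\) to \(x\). Either route takes only a couple of lines.

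There is no real obstacle here. The only care needed is with the case split \(x\leq y\) versus \(y\leq x\), which is decidable on \(\N\), and with reading \(x\equiv y\pmod g\) as the existence of \(k\) with \(y=x+kg\). The hypothesis \(i\leq y\) is not needed once \(x\leq y\) has been arranged; in the symmetric case the hypothesis \(i\leq y\) plays the same role that \(i\leq x\) plays here.
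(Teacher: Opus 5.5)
Your proposal is correct and matches the paper's proof: reduce by symmetry to \(x\leq y\), write \(y=x+kg\), and combine Lemmas~\ref{lem:raise} and~\ref{lem:mult} to obtain \(x\sim x+kg=y\). Both of the orders you give for applying the two lemmas work; the paper simply cites the two lemmas together without fixing an order.
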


\begin{proof}
Let us assume \(x\leq y\). Then \(y=x+kg\) for some \(k\in\N\). By
Lemmas~\ref{lem:raise} and~\ref{lem:mult}, we have
\[
x\sim x+kg=y.
\]
The other case follows by symmetry.
\end{proof}

\begin{lemma}\label{lem:div}
Let \(g>0\) be the least positive period at \(a\). Every positive period at
any point is divisible by \(g\).
\end{lemma}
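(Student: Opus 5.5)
Let \(e>0\) be a positive period at some point \(b\). The plan is to move both \(g\) and \(e\) to a common point, reduce \(e\) modulo \(g\), and use the minimality of \(g\) at \(a\) to force the remainder to vanish.

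The first step transports \(e\) down to \(a\). Since \(g>0\) is a period at \(a\), it is in particular a period at some point. We apply Lemma~\ref{lem:gdesc} with the roles arranged as follows: \(e\) is a period at \(b\), so \(b<b+e\) and \(b\sim b+e\). Lemma~\ref{lem:gdesc}, applied with the positive period \(g\) (at \(a\)), gives that \(g\) is a period at \(b\). That is the wrong direction, so instead we use Lemma~\ref{lem:descent} directly, splitting on whether \(a\le b\) or \(b\le a\); this is decidable, since order on \(\N\) is decidable.
\begin{itemize}
\item If \(b\le a\), Lemma~\ref{lem:raise} makes \(e\) a period at \(a\).
\item If \(a\le b\), Lemma~\ref{lem:descent}, with \(d=g\) at \(u=a\) and the period \(e\) at \(v=b\), shows that \(e\) is a period at \(a\).
\end{itemize}
In either case \(a\sim a+e\).

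The second step is division with remainder. Write \(e=qg+r\) with \(0\le r<g\). By Lemma~\ref{lem:mult}, \(qg\) is a period at \(a\), and by Lemma~\ref{lem:raise} it is also a period at \(a+r\). Hence
\[
a+r\sim a+r+qg=a+e\sim a,
\]
so \(a\sim a+r\) by symmetry and transitivity, and \(r\) is a period at \(a\).

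The third step concludes. If \(r>0\), then \(r\) would be a positive period at \(a\) smaller than \(g\), contradicting the leastness of \(g\). Since \(r=0\) or \(r>0\) is decidable on \(\N\), we obtain \(r=0\) constructively, so \(g\mid e\).

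The only delicate point is the first step: \(e\) lives at an arbitrary point \(b\), and getting it down to \(a\) requires the case split together with Lemma~\ref{lem:descent}, which uses the positive period \(g\) at \(a\) to pull \(e\) below \(b\). Everything else is routine, and no nonconstructive principle enters, because all the case distinctions concern decidable order and equality on \(\N\).
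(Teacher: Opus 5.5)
Your proposal is correct and follows the paper's proof: you transport \(e\) to \(a\) using Lemma~\ref{lem:descent}, write \(e=qg+r\), show that \(r\) is a period at \(a\), and use the minimality of \(g\) to force \(r=0\). The only difference is cosmetic. The paper avoids your case split by first raising \(e\) to the point \(a+b\ge a\) with Lemma~\ref{lem:raise}. Also, your discarded attempt with Lemma~\ref{lem:gdesc} would work in one step if you swap the roles, taking \(e\) as the positive period and \(x=a<y=a+g\).
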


\begin{proof}
Let \(e>0\) be a period at \(x\). By Lemma~\ref{lem:raise}, \(e\) is a period
at \(a+x\). Since \(a\leq a+x\), Lemma~\ref{lem:descent}, applied to the
period \(g\) at \(a\) and the period \(e\) at \(a+x\), shows that \(e\) is a
period at \(a\).

Let us denote
\[
e=qg+r,\qquad 0\leq r<g.
\]
By Lemma~\ref{lem:mult}, \(a\sim a+qg\). By adding \(r\) to this relation gives
\[
a+r\sim a+qg+r=a+e,
\]
and since \(a\sim a+e\), we obtain \(a\sim a+r\). Thus \(r\) is a period at
\(a\). Since \(0\leq r<g\) and \(r\) is a period at \(a\), the minimality of \(g\)
implies \(r=0\). Therefore
\(g\mid e\).
\end{proof}

\begin{proposition}\label{prop:core}
Suppose that \(a\) admits a positive period. Let \(g\) be the least positive
period at \(a\), and let \(i\) be the least point at which \(g\) is a period.
Then
\(
c=\equiv_{i,g}.
\)
\end{proposition}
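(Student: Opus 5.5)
We prove the two inclusions \(\equiv_{i,g}\subseteq c\) and \(c\subseteq\equiv_{i,g}\) separately. Throughout, \(g>0\) is a period at \(a\) by assumption, so \(i\leq a\) is well defined as the least element of the nonempty set of points at which \(g\) is a period. We take this minimum to exist in the ambient sense used by the statement, since the proposition is stated with \(g\) and \(i\) given.

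For \(\equiv_{i,g}\subseteq c\), let \(x\equiv_{i,g}y\). If \(x=y\), reflexivity of \(c\) applies. Otherwise \(x,y\geq i\) and \(x\equiv y\pmod g\). Since \(g\) is a period at \(i\), Lemma~\ref{lem:residue} gives \(x\sim y\) directly.

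For \(c\subseteq\equiv_{i,g}\), let \(x\sim y\) with \(x\neq y\). By symmetry we may assume \(x<y\).
\begin{enumerate}[label=(\roman*)]
\item The difference \(d=y-x>0\) is a positive period at \(x\). By Lemma~\ref{lem:div}, applied with the least positive period \(g\) at \(a\), we get \(g\mid d\), so \(x\equiv y\pmod g\).
\item Since \(g\) is a period at \(a\), Lemma~\ref{lem:gdesc} shows that \(g\) is a period at \(x\). The minimality of \(i\) then gives \(i\leq x\).
\item Hence also \(i\leq x<y\), so \(x,y\geq i\) with \(x\equiv y\pmod g\), which is exactly \(x\equiv_{i,g}y\).
\end{enumerate}

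The step that needs care is the descent in (ii): we must show that any related pair lies at or above \(i\). The difficulty is that \(x\) might a priori lie below \(a\), where we have no direct information about periods. Lemma~\ref{lem:gdesc} is exactly the tool that transports the period \(g\) down to \(x\), using the positive period \(y-x\) at \(x\) together with Lemma~\ref{lem:descent}.

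Divisibility in (i) similarly relies on Lemma~\ref{lem:div}, which already handles periods at arbitrary points. What remains is routine case bookkeeping on whether \(x=y\) and on the order of \(x\) and \(y\).
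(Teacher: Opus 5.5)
Your proof is correct and is essentially the paper's argument. The inclusion of $\equiv_{i,g}$ in $c$ comes from reflexivity and Lemma~\ref{lem:residue}, using that $g$ is a period at $i$; the reverse inclusion reduces to $x<y$ by symmetry, then uses Lemma~\ref{lem:gdesc} to get $i\leq x$ and Lemma~\ref{lem:div} to get $g\mid(y-x)$, exactly as in the paper, with only the order of the last two steps swapped.
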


\begin{proof}
Suppose \(x\sim y\). If \(x=y\), then \(x\equiv_{i,g}y\). Otherwise, since the
order on \(\N\) is decidable, we may assume \(x<y\) by symmetry.
Lemma~\ref{lem:gdesc} shows that \(g\) is a period at \(x\), so \(i\leq x\).
Moreover, \(y-x\) is a positive period at \(x\). By Lemma~\ref{lem:div}, we have
\[
g\mid(y-x).
\]
Thus \(x,y\geq i\) and \(x\equiv y\pmod g\), hence
\(
x\equiv_{i,g}y.
\)

Conversely, suppose \(x\equiv_{i,g}y\). If \(x\neq y\), then
\[
i\leq x,\qquad i\leq y,\qquad x\equiv y\pmod g,
\]
and Lemma~\ref{lem:residue} gives \(x\sim y\). The equality case follows by
reflexivity.
\end{proof}

\begin{lemma}\label{lem:ipunique}
Let \(g,p>0\). If
\(
\equiv_{i,g}\;=\;\equiv_{j,p},
\)
then
\(
i=j, g=p.
\)
\end{lemma}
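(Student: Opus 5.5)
We work throughout with the index-period relations on \(\N\), as fixed at the start of Section~\ref{sec:classificationledger}, and we recover each parameter from the relation itself. Both \(i\) and \(g\) are determined by intrinsic, decidable properties of \(\equiv_{i,g}\), so no case analysis beyond decidable comparisons on \(\N\) is needed.

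\emph{Step 1: recovering the period.} We first show that the least positive \(e\) with \(x\equiv_{i,g}x+e\) for some \(x\) is exactly \(g\). If \(e>0\), then \(x\neq x+e\), so the equality clause does not apply. Hence \(x,x+e\geq i\) and \(g\mid e\), which forces \(e\geq g\). Conversely, \(i\equiv_{i,g}i+g\), so the value \(g\) is attained. The same description applied to \(\equiv_{j,p}\) gives \(p\). Since the two relations coincide, their least positive periods coincide, and \(g=p\). More concretely, we argue by comparison. The relation \(i\equiv_{i,g}i+g\) gives \(i\equiv_{j,p}i+g\), and since \(g>0\) this yields \(p\mid g\). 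Symmetrically, \(g\mid p\). Both are positive, so \(g=p\).

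\emph{Step 2: recovering the index.} Now \(g=p\). The relation \(i\equiv_{i,g}i+g\) holds with \(i\neq i+g\), so the same pair is related under \(\equiv_{j,g}\), and the definition gives \(i\geq j\). Symmetrically, \(j\equiv_{j,p}j+p\) transfers to give \(j\geq i\). Therefore \(i=j\). Each step uses only the definition of \(\equiv_{i,p}\) together with decidable equality and order on \(\N\), so the argument is constructive.

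\emph{Main obstacle.} The only delicate point is the disjunction in Definition~\ref{def:ip}. From a relation \(x\equiv y\), one may extract the second clause only after ruling out \(x=y\). In each application above the related elements are \(i\) and \(i+g\), or \(j\) and \(j+p\), with the added period positive. These elements are therefore visibly distinct, and the equality disjunct is eliminated outright.
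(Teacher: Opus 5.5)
Your proof is correct and is essentially the paper's argument. Both rest on the fact that, for \(e>0\), \(n\equiv_{i,g}n+e\) forces \(n\geq i\) and \(g\mid e\), applied to the witness pairs \((i,i+g)\) and \((j,j+p)\). The paper reads off \(i\) as the least point admitting a positive period and then \(g\) as the least positive period at the common point \(i=j\), whereas your comparison version gets \(p\mid g\), \(g\mid p\), \(j\leq i\) and \(i\leq j\) directly from the same transfers, so your Step 2 does not actually need \(g=p\) first.
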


\begin{proof}
For \(e>0\), we have
\[
n\equiv_{i,g}n+e
\quad\Longleftrightarrow\quad
i\leq n\ \text{ and }\ g\mid e.
\]
Hence \(i\) is the least point admitting a positive period for
\(\equiv_{i,g}\). Similarly, \(j\) is the least such point for
\(\equiv_{j,p}\). Equality of the two relations gives \(i=j\).

At the common point $i=j$, the positive periods are precisely  the positive multiples of
\(g\) in the first relation and the positive multiples of \(p\) in the
second. Their least positive periods are therefore \(g\) and \(p\),
respectively. Hence \(g=p\).
\end{proof}

\subsection{The three forms}\label{ssec:threeforms}

\begin{theorem}\label{thm:catforced}
Suppose that \(c\) is decidable and that \(a,b\in\N\) are given with
\[
a\neq b,\qquad a\sim b.
\]
Then
\(
c=\equiv_{i,p}
\)
for a unique pair \(i\geq0\), \(p\geq1\). The conclusion follows without EM, LPO, or MP.
\end{theorem}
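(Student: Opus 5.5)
We apply Proposition~\ref{prop:core}; the work is to produce the point \(a\), the least positive period \(g\) there, and the least point \(i\) where \(g\) is a period, using only decidability of \(c\) and bounded search on \(\N\). Since order on \(\N\) is decidable, we first put the given pair in order: if \(a<b\) keep it, otherwise swap, using symmetry of \(c\). Now \(a<b\) and \(a\sim b\), so \(e_0:=b-a>0\) is an explicit positive period at \(a\). No unbounded search or case split on an undecided proposition is involved.

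Next we find the least positive period at \(a\). For each \(e\) with \(1\leq e\leq e_0\), the proposition \(a\sim a+e\) is decidable by hypothesis. A bounded search over this finite range therefore returns the least such \(e\); call it \(g\). The search is nonempty because \(e_0\) qualifies, and \(g\) is genuinely least among all positive periods at \(a\), since any smaller one would lie in the searched range. The least point \(i\) at which \(g\) is a period is found the same way. The candidates \(0\leq n\leq a\) are decidable via \(n\sim n+g\), and \(a\) itself qualifies, so bounded search gives the least such \(n\). Points above \(a\) need not be considered, because \(i\leq a\) by construction.

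Proposition~\ref{prop:core} then yields \(c=\equiv_{i,g}\) with \(g\geq1\) and \(i\geq0\), and we set \(p=g\). Uniqueness of the pair is exactly Lemma~\ref{lem:ipunique}. Every step uses only decidability of \(c\), decidable order and equality on \(\N\), bounded minimisation, and the earlier lemmas. The proofs of those lemmas are all direct, apart from the harmless \(x<y\) split in Proposition~\ref{prop:core}, which relies on decidable order, so no EM, LPO, or MP enters.

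The main point requiring care is the claim that bounded search yields a true minimum in the sense used by Proposition~\ref{prop:core} and Lemma~\ref{lem:div}. There, minimality is applied to a remainder \(r<g\), and that remainder must lie inside the searched range. This does hold, since \(r<g\leq e_0\). One should also check that Proposition~\ref{prop:core}'s hypothesis "\(i\) least point at which \(g\) is a period" is over all of \(\N\), which is guaranteed because \(a\) bounds the search.
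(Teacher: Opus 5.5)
Your proposal is correct and follows the paper's proof essentially step for step. You order the pair using decidable order and symmetry, take $b-a$ as a positive period at the smaller point, find the least positive period $g$ by bounded search over $1\le q\le b-a$ and the least point $i$ by bounded search over $0\le n\le a$, and then invoke Proposition~\ref{prop:core} for $c=\equiv_{i,g}$ and Lemma~\ref{lem:ipunique} for uniqueness. Your extra check that the bounded minima are global minima, because every smaller candidate lies in the searched range, is sound and is exactly what makes Proposition~\ref{prop:core} applicable.
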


\begin{proof}
Since \(a\neq b\) and order on \(\N\) is decidable, symmetry gives
\(u<v\) with \(u\sim v\). Then
\[
d=v-u>0
\]
is a period at \(u\).

The predicate
\[
q\longmapsto 0<q\ \wedge\ u\sim u+q
\]
is decidable and holds at \(d\). A bounded search over
\(
1\leq q\leq d
\)
gives the least positive period \(g\) at \(u\).

The predicate
\[
n\longmapsto n\sim n+g
\]
is decidable and holds at \(u\). A bounded search over
\(
0\leq n\leq u
\)
gives the least point \(i\) at which \(g\) is a period.

Proposition~\ref{prop:core} gives
\(
c=\equiv_{i,g},
\)
and Lemma~\ref{lem:ipunique} gives uniqueness. The two searches are bounded and decidable. Therefore the derivation uses none of EM, LPO, or MP.
\end{proof}

\begin{theorem}\label{thm:catMarkov's}
Suppose that \(c\) is decidable and is not equality:
\[
\neg\forall x,y\in\N\,
\bigl(x\sim y\longrightarrow x=y\bigr).
\]
Then
\(
c=\equiv_{i,p}
\)
for a unique pair \(i\geq0\), \(p\geq1\). The derivation is conditional on
\(\mathrm{MP}\).
\end{theorem}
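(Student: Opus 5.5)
The plan is to reduce the statement to Theorem~\ref{thm:catforced} by using Markov's principle to extract an explicit pair of distinct related elements. Everything else is already forced.

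First, encode the candidate witnesses as a single decidable predicate on $\N$. Using the injective pairing $\langle m,n\rangle$ from the proof of Proposition~\ref{thm:standardencodable}, or any decidable unpairing $\N\to\N\times\N$, define
\[
\theta(k)\;:\Longleftrightarrow\; x_k\neq y_k\ \wedge\ x_k\sim y_k ,
\]
where $(x_k,y_k)$ is the pair decoded from $k$. The predicate $\theta$ is decidable for two reasons: equality on $\N$ is decidable, and $c$ is decidable by hypothesis.

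Second, derive $\neg\neg\exists k\,\theta(k)$ from the hypothesis $\neg\forall x,y\,(x\sim y\to x=y)$. Suppose $\neg\exists k\,\theta(k)$, and let $x\sim y$ be given. Equality is decidable, so either $x=y$, and we are done, or $x\neq y$. In the second case the code $k$ of $(x,y)$ satisfies $\theta(k)$, which is a contradiction, and $\bot$ gives $x=y$. Thus $\forall x,y\,(x\sim y\to x=y)$ holds, contradicting the hypothesis. This step is intuitionistic and uses only the decidability of equality; I expect it to be the only point needing care. The important feature is that the case split is on decidable equality of $\N$, not on an instance of excluded middle.

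Third, apply $\mathrm{MP}$ to $\theta$ to obtain $k$ with $\theta(k)$, that is, explicit $a\neq b$ with $a\sim b$. Theorem~\ref{thm:catforced} then yields $c=\equiv_{i,p}$ for a unique pair $i\geq0$, $p\geq1$, without EM, LPO, or MP. Uniqueness comes from Lemma~\ref{lem:ipunique}, so no further principle enters. The only nonconstructive principle used is the single application of MP, and the derivation is therefore conditional on $\mathrm{MP}$. The claim that MP cannot be removed belongs to the converse part of Theorem~\ref{thm:pricing} and is not needed here.
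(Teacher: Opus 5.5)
Your proposal is correct and follows the paper's proof step for step. The paper also uses a decidable predicate \(\theta\) encoding a distinct related pair, derives \(\neg\neg\exists k\,\theta(k)\) intuitionistically via decidable equality on \(\N\), applies MP once, and then invokes Theorem~\ref{thm:catforced}. The only difference is cosmetic: the paper takes \(\theta(k):\Longleftrightarrow\exists x<k\,\exists y<k\,(x\sim y\wedge x\neq y)\), which is decidable by bounded search and needs no unpairing function, with \(\theta(\max(x,y)+1)\) playing the role of your code \(\langle x,y\rangle\).
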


\begin{proof}
We define
\[
\theta(k)
\quad:\Longleftrightarrow\quad
\exists x<k,\exists y<k\,
\bigl(x\sim y\ \wedge\ x\neq y\bigr).
\]
The predicate \(\theta\) is decidable, since \(c\) is decidable and both
quantifiers are bounded. Hence the ambient form of \(\mathrm{MP}\) fixed in
Convention~\ref{conv:classificationprice} applies to \(\theta\).

We claim that
\[
\neg\neg\exists k,\theta(k).
\]
Indeed, suppose that \(\neg\exists k,\theta(k)\), and let \(x\sim y\). {If \(x\neq y\), then
\[
\theta(\max(x,y)+1)
\]
holds, which is impossible. Since equality on \(\N\) is decidable, we obtain
\(x=y\). Hence every related pair is equal, and therefore \(c\) is equality.
This contradicts the assumption that \(c\) is not equality.
}

Markov's principle gives
\[
\exists k,\theta(k).
\]
Thus there are explicit \(a\neq b\) with \(a\sim b\). Theorem~\ref{thm:catforced}
then gives the classification.
 The only nonconstructive step is the use
of \(\mathrm{MP}\).
\end{proof}


\begin{lemma}\label{lem:lnp}
Assume \(\mathrm{EM}\). If \(P\) is a predicate on \(\N\) and
\(
\exists n\in\N\, P(n),
\)
then \(P\) has a least witness.
\end{lemma}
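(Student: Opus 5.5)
We prove the least number principle under \(\mathrm{EM}\) by strong induction on a fixed witness. Let \(n_0\) be a witness, so \(P(n_0)\) holds. We show by strong induction on \(m\) the statement
\[
Q(m)\;:\Longleftrightarrow\;
P(m)\longrightarrow \exists k\,\bigl(P(k)\wedge\forall j<k\,\neg P(j)\bigr).
\]
Applying \(Q(n_0)\) to \(P(n_0)\) then gives the least witness. Strong induction on \(\N\) follows from ordinary induction applied to \(\forall j<m\,Q(j)\), and uses no additional principle.

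For the inductive step, fix \(m\) with \(P(m)\), and assume \(Q(j)\) for all \(j<m\). We apply \(\mathrm{EM}\) to the ambient proposition \(\exists j<m\,P(j)\).

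If \(\exists j<m\,P(j)\) holds, we obtain some \(j<m\) with \(P(j)\). The hypothesis \(Q(j)\) then yields a least witness.

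Otherwise \(\neg\exists j<m\,P(j)\) holds, which is equivalent intuitionistically to \(\forall j<m\,\neg P(j)\). In this case \(k=m\) is itself a least witness.

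An alternative route avoids strong induction. Apply \(\mathrm{EM}\) to each proposition \(P(j)\) for \(j\le n_0\), and perform a bounded search over \(0\le j\le n_0\). This search becomes effective once each \(P(j)\vee\neg P(j)\) is available. To make the finite family of case splits precise, one argues by induction on the bound. I would present the first version.

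The only delicate point is making sure the case split is applied to a single proposition at each stage, rather than assuming decidability of \(P\) wholesale. Either way, \(\mathrm{EM}\) is the only nonconstructive principle used, which is what the subsequent proof of Theorem~\ref{thm:pricing}(c) needs. There, \(\mathrm{EM}\) first decides whether \(c\) is equality, and in the nonequality case the lemma supplies the least positive period and least index without assuming decidability of \(c\).
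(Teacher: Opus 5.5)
Your proof is correct and is essentially the paper's argument: strong induction on the witness \(n\) of the statement ``\(P(n)\) implies \(P\) has a least witness'', with \(\mathrm{EM}\) applied to the single bounded proposition \(\exists k<n\,P(k)\). In the first case you recurse on a smaller witness, and in the second case \(n\) itself is least. Your added remarks, deriving strong induction from ordinary induction and noting the intuitionistic equivalence \(\neg\exists k<n\,P(k)\leftrightarrow\forall k<n\,\neg P(k)\), make explicit steps that the paper leaves implicit.
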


\begin{proof}
We prove by strong induction on \(n\) that, if \(P(n)\), then \(P\) has a
least witness. Suppose \(P(n)\). By EM, either
\[
\exists k<n\, P(k)
\quad\mbox{
or}\quad
\neg\exists k<n\, P(k).
\]
In the first case, choose \(k<n\) with \(P(k)\). The induction hypothesis
gives a least witness of \(P\). In the second case, \(n\) is a least witness.
\end{proof}

\begin{theorem}\label{thm:catclassical}
Let \(c\) be an arbitrary additive congruence on \((\N,+,0)\). Then either
\(c\) is equality, or
\(
c=\equiv_{i,p}
\)
for a unique pair \(i\geq0\), \(p\geq1\). The derivation is conditional on
\(\mathrm{EM}\).
\end{theorem}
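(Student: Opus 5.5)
Under EM, we first decide whether \(c\) is equality, that is, whether \(\forall x,y\,(x\sim y\to x=y)\) holds. If it does, we are in the first alternative. Otherwise we have \(\neg\forall x,y\,(x\sim y\to x=y)\), and from this we want to extract an explicit related pair \(a\neq b\). We cannot apply Theorem~\ref{thm:catforced} directly, because \(c\) need not be decidable. We therefore rebuild the two searches of that proof, replacing bounded decidable search by the least number principle of Lemma~\ref{lem:lnp}, which holds under EM.

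The steps are as follows. First, a second application of EM to the proposition \(\exists x,y\,(x\sim y\wedge x\neq y)\) gives either a witness or its negation. The negation, combined with decidability of equality on \(\N\), yields \(\forall x,y\,(x\sim y\to x=y)\), which contradicts the case assumption. So we obtain \(a\neq b\) with \(a\sim b\). Since order on \(\N\) is decidable, we may assume \(a<b\); then \(d=b-a>0\) is a positive period at \(a\). Second, we apply Lemma~\ref{lem:lnp} to the predicate \(q\mapsto 0<q\wedge a\sim a+q\), which has the witness \(d\), and obtain the least positive period \(g\) at \(a\). Third, we apply Lemma~\ref{lem:lnp} to \(n\mapsto n\sim n+g\), which holds at \(a\), and obtain the least point \(i\) at which \(g\) is a period. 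These are exactly the hypotheses of Proposition~\ref{prop:core}, so \(c=\equiv_{i,g}\) with \(g\geq1\). Uniqueness of the pair follows from Lemma~\ref{lem:ipunique}. Equality and \(\equiv_{i,p}\) are distinct alternatives, since \(i\equiv_{i,p}i+p\) while \(i\neq i+p\). The only nonconstructive inputs are EM and Lemma~\ref{lem:lnp}, which itself rests on EM, so the derivation is conditional on EM.

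The main point to check is that the proof of Proposition~\ref{prop:core} and the lemmas it uses (Lemmas~\ref{lem:raise}--\ref{lem:div}) never use decidability of \(c\). They use only the congruence laws, decidable order and arithmetic on \(\N\), and the minimality properties of \(g\) and \(i\). In Lemma~\ref{lem:div}, minimality is applied to a remainder \(r<g\) that is a period at \(a\). The case \(r>0\) gives a contradiction with minimality, so \(r=0\) follows from decidability of equality on \(\N\), not from decidability of \(c\). Likewise, the case split \(x=y\) or \(x<y\) in Proposition~\ref{prop:core} uses only decidable order on \(\N\). So the argument goes through for an arbitrary congruence once the least elements are supplied by Lemma~\ref{lem:lnp}.
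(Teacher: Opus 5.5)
Your proposal is correct and follows the paper's proof essentially step for step: EM to decide whether $c$ is equality, EM again to extract an explicit related pair $u<v$, Lemma~\ref{lem:lnp} applied twice to obtain the least positive period $g$ and the least point $i$, and then Proposition~\ref{prop:core} and Lemma~\ref{lem:ipunique}. Two points the paper leaves implicit are spelled out in your version: you show that the negative branch of the second EM application contradicts the case assumption, and you check that Proposition~\ref{prop:core} and its supporting lemmas never use decidability of $c$.
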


{\begin{proof}
By \(\mathrm{EM}\), either
\[
\forall x,y\in\N,\qquad x\sim y\longrightarrow x=y,
\]
or this statement fails. In the first case, \(c\) is equality.

In the second case, \(\mathrm{EM}\) applied to the existence of a related pair
of distinct elements gives \(u\neq v\) with \(u\sim v\).
Without loss of generality, assume \(u<v\).
Then \(v-u\) is a positive period at \(u\).
Lemma~\ref{lem:lnp}, applied to the predicate
\[
q\longmapsto 0<q\ \wedge\ u\sim u+q,
\]
gives the least positive period \(g\) at \(u\). Applied to
\[
n\longmapsto n\sim n+g,
\]
it gives the least point \(i\) at which \(g\) is a period.
Proposition~\ref{prop:core} gives
\(
c=\equiv_{i,g},
\)
and Lemma~\ref{lem:ipunique} gives uniqueness.
\end{proof}}

The isomorphism
\(
D\colon(\Ndel,+,0)\longrightarrow(\N,+,0)
\)
induces a bijection between additive congruences on \(\Ndel\) and additive
congruences on \(\N\). Moreover,
\[
x\equiv_{i,p}y
\quad\Longleftrightarrow\quad
D(x)\equiv_{i,p}D(y).
\]
Hence the three classification theorems for \((\N,+,0)\) are equivalent to
their corresponding forms for \((\Ndel,+,0)\). The classical form is
Theorem~\ref{thm:catalogue}.

\subsection{Exactness of the prices}\label{ssec:exactness}

Theorems~\ref{thm:catMarkov's} and~\ref{thm:catclassical} show that
\(\mathrm{MP}\) and \(\mathrm{EM}\) are sufficient. We now prove that they
are also necessary for the corresponding classification.

Let \(\mathrm{MarkovForm}\) denote the statement that every decidable
additive congruence on \((\N,+,0)\) which is not equality is equal to
\(\equiv_{i,p}\) for a unique pair \(i\geq0\), \(p\geq1\).

Let \(\mathrm{ClassicalForm}\) denote the statement that every additive
congruence on \((\N,+,0)\) is either equality or is equal to
\(\equiv_{i,p}\) for a unique pair \(i\geq0\), \(p\geq1\).

\begin{proposition}\label{prop:mpsharp}
The statement \(\mathrm{MarkovForm}\) implies Markov's principle.
\end{proposition}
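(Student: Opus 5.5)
Given a decidable predicate $P$ on $\N$ with $\neg\neg\exists n\,P(n)$, the plan is to build a decidable additive congruence $c_P$ on $(\N,+,0)$ that collapses exactly when $P$ has a witness. Then we apply $\mathrm{MarkovForm}$ and read off a witness from the resulting pair $(i,p)$. The natural choice is
\[
x\sim y \quad:\Longleftrightarrow\quad x=y\ \vee\ \bigl(\exists n\le\min(x,y)\,P(n)\bigr).
\]
Here "$n\le\min(x,y)$" is meant so that the relation is the $\equiv_{i,1}$-type congruence with $i$ the least witness of $P$. The existential is bounded and $P$ is decidable, so $\sim$ is decidable by a bounded search. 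Reflexivity and symmetry are immediate. For transitivity, if $x\sim y\sim z$ with $x\ne y$ and $y\neq z$, we have witnesses $n\le\min(x,y)$ and $n'\le\min(y,z)$. We take whichever is smaller, so that it is $\le\min(x,z)$; the cases where one of the relations is an equality are trivial. Additive compatibility holds because $\min(x+t,y+t)\ge\min(x,y)$, so the same witness works.

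Next I would verify that $c_P$ is not equality, in the double-negated sense that $\mathrm{MarkovForm}$ needs. Suppose $c_P$ were equality. If $P(n)$ held, then $n\sim n+1$ with $n\ne n+1$, a contradiction. Hence $\neg\exists n\,P(n)$, which contradicts $\neg\neg\exists n\,P(n)$. Therefore $\neg\forall x,y\,(x\sim y\to x=y)$.

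$\mathrm{MarkovForm}$ then yields explicit $i\ge0$, $p\ge1$ with $c_P=\equiv_{i,p}$. In particular $i\sim i+p$ and $i\ne i+p$. Unfolding the definition of $\sim$, the disjunct $i=i+p$ is false, so $\exists n\le i\,P(n)$. We cannot stop at this existential, however. The key point, and the only real obstacle, is extracting an actual witness constructively rather than a mere existence claim. Because the quantifier is bounded by the explicit number $i$ and $P$ is decidable, a bounded search over $0\le n\le i$ finds a concrete $n$ with $P(n)$, and this search is justified by decidability alone. This gives $\exists n\,P(n)$, which is Markov's principle.

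I would double-check one point: that "decidable congruence" in $\mathrm{MarkovForm}$ means pointwise decidability of $c$, which our bounded-search argument provides without any use of $\mathrm{MP}$. No EM or MP is used anywhere in the reduction.
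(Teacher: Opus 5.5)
Your proposal is correct and follows the paper's proof: it uses the same congruence $x\sim y \iff x=y \vee \exists n\le\min(x,y)\,P(n)$, the same refutation of ``$\sim$ is equality'' via $n\sim n+1$, and the same extraction from $i\sim i+p$ with $i\neq i+p$ (your transitivity step, taking the smaller of the two witnesses, is a harmless variant of the paper's split on $x\le z$ versus $z<x$). The one thing to drop is the ``key obstacle'': no bounded search is needed at the end, since a proof of $\exists n\le i\,P(n)$ already contains its witness, so $\exists n\,P(n)$ follows by simply forgetting the bound, exactly as the paper does.
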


\begin{proof}
Let \(\theta\) be a decidable predicate on \(\N\), and suppose
\(
\neg\neg\exists n,\theta(n).
\)
We define
\[
x\sim_\theta y
\quad:\Longleftrightarrow\quad
x=y
\quad\vee\quad
\exists k\leq\min(x,y),\theta(k).
\]

We first check that \(\sim_\theta\) is an additive congruence. Reflexivity
and symmetry are immediate. For transitivity, suppose
\(
x\sim_\theta y, y\sim_\theta z.
\)
If \(x=y\) or \(y=z\), then \(x\sim_\theta z\) follows directly. Assume
that \(x\neq y\) and \(y\neq z\). Then there exist
\[
k\leq\min(x,y),\qquad \ell\leq\min(y,z)
\]
such that \(\theta(k)\) and \(\theta(\ell)\). Since order on \(\N\) is
decidable, either \(x\leq z\) or \(z<x\). If \(x\leq z\), then
\[
k\leq x=\min(x,z).
\]
If \(z<x\), then
\[
\ell\leq z=\min(x,z).
\]
Thus \(x\sim_\theta z\).

For compatibility with addition, let \(x\sim_\theta y\) and \(t\in\N\). If
\(x=y\), then \(x+t=y+t\). Otherwise the relation is witnessed by some
\(k\leq\min(x,y)\), and then
\[
k\leq\min(x+t,y+t),
\]
so \(x+t\sim_\theta y+t\). The relation is decidable because equality is
decidable and the search for \(k\) is bounded.

{Suppose that \(\sim_\theta\) is equality. For any \(k\), if \(\theta(k)\)
holds, then
\[
k\sim_\theta k+1,
\]
although \(k\neq k+1\). Hence \(\neg\theta(k)\) for every \(k\), and therefore
\(
\neg\exists k\,\theta(k).
\)
This contradicts the assumption
\(
\neg\neg\exists k\,\theta(k).
\)
Thus \(\sim_\theta\) is not equality.}

By \(\mathrm{MarkovForm}\), there exist \(i\geq0\) and \(p\geq1\) such that
\(
\sim_\theta\;=\;\equiv_{i,p}.
\) {Since
\(
i\equiv_{i,p}i+p,
\)
we have \(i\sim_\theta i+p\). Since \(p\geq1\), we have \(i\neq i+p\), so
the equality assumption is impossible. Therefore
\[
\exists k\leq\min(i,i+p)\,\theta(k).
\]
Since \(\min(i,i+p)=i\), it follows that
\(
\exists k\,\theta(k).
\)
Since \(\theta\) is arbitrary, then \(\mathrm{MP}\) follows.}
\end{proof}

\begin{proposition}\label{prop:emsharp}
The statement \(\mathrm{ClassicalForm}\) implies EM.
\end{proposition}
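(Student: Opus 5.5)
Given an arbitrary proposition \(P\), I will encode it into an additive congruence on \((\N,+,0)\), in the same style as the proof of Proposition~\ref{prop:mpsharp}. Define
\[
x\sim_P y
\quad:\Longleftrightarrow\quad
x=y\ \vee\ P .
\]
If \(P\) holds, this relation is the total relation. Otherwise it is equality. The plan is to check that \(\sim_P\) is a congruence, apply \(\mathrm{ClassicalForm}\) to it, and read off \(P\vee\neg P\) from the resulting disjunction.

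\emph{Congruence.} Reflexivity and symmetry are immediate. For transitivity, suppose \(x\sim_P y\) and \(y\sim_P z\). If either relation is witnessed by \(P\), then \(x\sim_P z\) via \(P\). Otherwise \(x=y=z\). This argument uses only case analysis on the given disjunctions, so no EM is needed. Compatibility with addition holds because \(x=y\) gives \(x+t=y+t\), and \(P\) is unchanged by adding \(t\).

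\emph{Extracting the decision.} By \(\mathrm{ClassicalForm}\), either \(\sim_P\) is equality, or \(\sim_P=\equiv_{i,p}\) for some \(i\geq0\), \(p\geq1\).
\begin{itemize}
\item In the first case, we show \(\neg P\). If \(P\) held, then \(0\sim_P 1\), so equality would give \(0=1\), which is absurd.
\item In the second case, \(i\equiv_{i,p}i+p\), as observed in the proof of Theorem~\ref{thm:recognition}. Hence \(i\sim_P i+p\), that is, \(i=i+p\ \vee\ P\). Since \(p\geq1\), the first disjunct gives \(\bot\), and so \(P\) holds.
\end{itemize}
Thus \(P\vee\neg P\). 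Since \(P\) was arbitrary, EM follows.

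The only delicate point is that the congruence is defined by a disjunction involving an arbitrary proposition. Unlike in Proposition~\ref{prop:mpsharp}, the relation \(\sim_P\) is not decidable, but \(\mathrm{ClassicalForm}\) quantifies over arbitrary congruences, so this is permitted. I also need every step of the verification to avoid EM; that holds because each step only eliminates a given disjunction or uses decidable equality on \(\N\). The uniqueness clause in \(\mathrm{ClassicalForm}\) plays no role in the argument.
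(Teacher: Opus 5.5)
Your proof is correct and is essentially the paper's own argument. You use the same congruence \(x\sim_P y:\Longleftrightarrow x=y\vee P\), verify the congruence properties by the same case analysis, obtain \(\neg P\) in the equality case from \(0\sim_P 1\), and obtain \(P\) in the index-period case from \(i\sim_P i+p\) together with \(i\neq i+p\).
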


\begin{proof}
Let \(P\) be an arbitrary proposition and define
\[
x\sim_P y
\quad:\Longleftrightarrow\quad
x=y\quad\vee\quad P.
\]
This is an additive congruence. Reflexivity and symmetry are immediate.
Transitivity follows by cases, and if \(x\sim_P y\), then
\[
x+t=y+t\quad\vee\quad P,
\]
so \(x+t\sim_P y+t\). By \(\mathrm{ClassicalForm}\), either \(\sim_P\) is equality or
\(
\sim_P\;=\;\equiv_{i,p}
\)
for some \(i\geq0\), \(p\geq1\).

In the first case, \(P\) implies
\(
0\sim_P1,
\)
and hence \(0=1\). Therefore \(\neg P\).

In the second case,
\(
i\equiv_{i,p}i+p,
\)
and therefore
\(
i\sim_P i+p.
\)
Thus
\[
i=i+p\quad\vee\quad P.
\]
Since \(p\geq1\), we have \(i\neq i+p\). Hence \(P\).

Therefore \(P\vee\neg P\). Since \(P\) is arbitrary, then EM follows.
\end{proof}

{Theorems~\ref{thm:catMarkov's} and~\ref{thm:catclassical}, together with
Propositions~\ref{prop:mpsharp} and~\ref{prop:emsharp}, give the following
equivalences.

\begin{corollary}\label{cor:exact}
The two classification forms satisfy
\[
\mathrm{MarkovForm}\Longleftrightarrow\mathrm{MP},
\qquad
\mathrm{ClassicalForm}\Longleftrightarrow\mathrm{EM}.
\]
Thus the {metatheoretic logical} prices \(\{\mathrm{MP}\}\) and \(\{\mathrm{EM}\}\) are exact.
\end{corollary}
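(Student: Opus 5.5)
The corollary asserts two biconditionals, and I will assemble each from two results already proved; no new argument is needed. For the forward direction $\mathrm{MP}\Rightarrow\mathrm{MarkovForm}$ I will invoke Theorem~\ref{thm:catMarkov's}. Under $\mathrm{MP}$, that theorem shows that every decidable additive congruence on $(\N,+,0)$ which is not equality equals $\equiv_{i,p}$ for a unique pair $i\geq0$, $p\geq1$, and this is precisely $\mathrm{MarkovForm}$. The reverse direction $\mathrm{MarkovForm}\Rightarrow\mathrm{MP}$ is Proposition~\ref{prop:mpsharp}.

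For $\mathrm{EM}\Rightarrow\mathrm{ClassicalForm}$ I will use Theorem~\ref{thm:catclassical}. It proves, assuming $\mathrm{EM}$, that an arbitrary additive congruence is either equality or some $\equiv_{i,p}$ with $(i,p)$ unique; this is $\mathrm{ClassicalForm}$. The converse $\mathrm{ClassicalForm}\Rightarrow\mathrm{EM}$ is Proposition~\ref{prop:emsharp}.

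The only point that needs care is that the principles match exactly on the two sides of each equivalence. Both sides must be read in the sense of Convention~\ref{conv:classificationprice}: $\mathrm{MP}$ is Markov's principle for arbitrary decidable predicates on $\N$, and $\mathrm{EM}$ is excluded middle for arbitrary ambient propositions. Proposition~\ref{prop:mpsharp} starts from an arbitrary decidable $\theta$ and builds the decidable congruence $\sim_\theta$, so it yields $\mathrm{MP}$ in exactly this form. Likewise Proposition~\ref{prop:emsharp} starts from an arbitrary proposition $P$, so it yields full $\mathrm{EM}$. Theorem~\ref{thm:catMarkov's} applies $\mathrm{MP}$ only to a bounded decidable $\theta$, and Theorem~\ref{thm:catclassical} uses $\mathrm{EM}$ only on ambient propositions, so both stay within these same forms.

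Once the two biconditionals hold, exactness follows. Any principle that suffices for $\mathrm{MarkovForm}$ (respectively $\mathrm{ClassicalForm}$) already implies $\mathrm{MP}$ (respectively $\mathrm{EM}$), so the prices $\{\mathrm{MP}\}$ and $\{\mathrm{EM}\}$ cannot be lowered.
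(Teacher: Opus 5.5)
Your proposal is correct and follows the paper's proof exactly: each biconditional combines the corresponding sufficiency theorem (the Markov-form and classical-form classification theorems) with its reverse implication (Propositions~\ref{prop:mpsharp} and~\ref{prop:emsharp}). Your extra check that both sides use $\mathrm{MP}$ and $\mathrm{EM}$ in the sense of Convention~\ref{conv:classificationprice} is a sound addition that the paper leaves implicit.
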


\begin{proof}
Theorem~\ref{thm:catMarkov's} gives
\[
\mathrm{MP}\Longrightarrow\mathrm{MarkovForm},
\]
and Proposition~\ref{prop:mpsharp} gives the converse. Similarly,
Theorem~\ref{thm:catclassical} gives
\[
\mathrm{EM}\Longrightarrow\mathrm{ClassicalForm},
\]
and Proposition~\ref{prop:emsharp} gives the converse.
\end{proof}}

Together with Theorem~\ref{thm:catforced}, this completes the proof of
Theorem~\ref{thm:pricing}.
\begin{proposition}\label{prop:decidableEM}
Assume \(\mathrm{EM}\). Then every additive congruence on \((\N,+,0)\) is
equal to a decidable relation.
\end{proposition}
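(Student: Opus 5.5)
Under \(\mathrm{EM}\), we would use the classical classification, Theorem~\ref{thm:catclassical}. Given an arbitrary additive congruence \(c\) on \((\N,+,0)\), it gives two cases: either \(c\) is equality, or \(c=\equiv_{i,p}\) for a unique pair \(i\geq0\), \(p\geq1\). In each case we exhibit a decidable relation equal to \(c\).

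\emph{Equality case.} Here \(c\) coincides, as a relation, with equality on \(\N\). Equality on \(\N\) is decidable in the ambient metatheory, since equality and order on \(\N\) are assumed decidable. By propositional extensionality of relations (two relations are equal when they relate the same pairs), \(c\) equals this decidable relation.

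\emph{Index-period case.} Here \(c\) equals the index-period relation \(\equiv_{i,p}\) on \(\N\). Its decidability is exactly the \(\N\)-side part of the proof of Lemma~\ref{lem:indexperiod}. The relation is defined by \(m=n\), or by \(m,n\geq i\) together with \(m\equiv n \pmod p\). Each clause is decidable, since equality, order, and congruence modulo \(p\) are decidable on \(\N\), and finite Boolean combinations of decidable propositions are decidable.

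The main subtlety is the meaning of ``equal to a decidable relation''. The intended statement is that there is a relation \(R\) with \(R=c\) and \(\forall x,y\,(R(x,y)\vee\neg R(x,y))\). We do not need to prove that \(c\) is decidable by inspecting \(c\) directly, although under \(\mathrm{EM}\) that conclusion is trivial anyway. The content of the proposition is that \(c\) coincides with an explicitly decidable relation (equality or \(\equiv_{i,p}\)), and the case split above supplies that relation. The only point to check is that the equality \(R=c\) is interpreted extensionally, pointwise as \(\forall x,y\,(R(x,y)\leftrightarrow c(x,y))\), which is the sense in which Theorem~\ref{thm:catclassical} states \(c=\equiv_{i,p}\).

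The only nonconstructive step in this argument is the use of \(\mathrm{EM}\) inside Theorem~\ref{thm:catclassical}.
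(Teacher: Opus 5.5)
Your proposal is correct and follows the paper's proof: apply Theorem~\ref{thm:catclassical} to split into the equality case and the case \(c=\equiv_{i,p}\), then note that equality on \(\N\) is decidable and that \(\equiv_{i,p}\) is decidable by the \(\N\)-side argument of Lemma~\ref{lem:indexperiod}. Your side remark is also accurate: under \(\mathrm{EM}\) every relation is already trivially decidable, so the real content is exhibiting an explicit decidable relation equal to \(c\).
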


\begin{proof}
Let \(c\) be an additive congruence. By
Theorem~\ref{thm:catclassical}, either \(c\) is equality or
\(
c=\equiv_{i,p}
\)
for some \(i\geq0\) and \(p\geq1\). Equality on \(\N\) is decidable, and
\(\equiv_{i,p}\) is decidable by Lemma~\ref{lem:indexperiod}. Hence \(c\) is equal
to a decidable relation.
\end{proof}}

{\begin{proposition}\label{prop:decsharp}
Let \(\mathrm{MarkovForm}^{-}\) denote the statement obtained from
\(\mathrm{MarkovForm}\) by deleting the decidability hypothesis: every
additive congruence on \((\N,+,0)\) which is not equality is equal to
\(\equiv_{i,p}\) for a unique pair \(i\geq0\), \(p\geq1\). Then
\(\mathrm{MarkovForm}^{-}\) implies \(\mathrm{EM}\).
\end{proposition}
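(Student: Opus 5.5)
We prove \(P\vee\neg P\) for an arbitrary proposition \(P\), reusing the congruence \(\sim_P\) from the proof of Proposition~\ref{prop:emsharp} but modified so that it is \emph{provably} not equality. That congruence cannot be used directly, since \(\mathrm{MarkovForm}^{-}\) only applies to congruences already known not to be equality, and \(\sim_P\) may well be equality. So we first merge in a fixed nontrivial index-period congruence whose collapse is guaranteed, while keeping \(P\) visible at a place the classification can detect.

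Define
\[
x\approx_P y
\quad:\Longleftrightarrow\quad
x\equiv_{1,1}y\quad\vee\quad P .
\]
Here \(x\equiv_{1,1}y\) means \(x=y\) or \(x,y\geq1\). Thus \(P\) collapses everything, and otherwise only the positive numbers are identified.

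The congruence check runs as follows.
\begin{itemize}
\item Reflexivity and symmetry are immediate.
\item Transitivity is by cases: if either hypothesis comes from the \(P\)-disjunct, then \(P\) holds and we conclude via the second disjunct; otherwise transitivity of \(\equiv_{1,1}\) (Lemma~\ref{lem:indexperiod}) applies.
\item Compatibility with addition follows from the compatibility of \(\equiv_{1,1}\), the \(P\)-disjunct being unaffected by adding \(t\).
\end{itemize}

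The relation \(\approx_P\) is not equality, because \(1\approx_P 2\) holds via \(1\equiv_{1,1}2\) while \(1\neq2\). This establishes the hypothesis of \(\mathrm{MarkovForm}^{-}\) outright, with no double negation involved. Hence \(\mathrm{MarkovForm}^{-}\) yields \(i\geq0\) and \(p\geq1\) with \(\approx_P\;=\;\equiv_{i,p}\).

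Now decide on the natural number \(i\), which is allowed since order and equality on \(\N\) are decidable. We show that \(i=0\) forces \(P\) and \(i\geq1\) forces \(\neg P\).

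If \(i=0\), then \(0\equiv_{0,p}p\), so \(0\approx_P p\). The first disjunct would say \(0\equiv_{1,1}p\) with \(p\geq1\), which is false because \(0\neq p\) and \(0<1\). Hence \(P\) holds.

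If \(i\geq1\), assume \(P\). Then \(0\approx_P 1\), so \(0\equiv_{i,p}1\). Since \(0\neq1\), this requires \(0\geq i\), contradicting \(i\geq1\). Hence \(\neg P\).

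Thus \(P\vee\neg P\) holds in both cases, giving \(\mathrm{EM}\). The main point needing care is the choice of the auxiliary relation: it must be provably non-equality without any use of \(P\), and it must be such that collapsing at \(0\) is detectable from the decidable parameter \(i\). The base \(\equiv_{1,1}\) is chosen so that \(0\) is isolated exactly when \(\neg P\). The remaining verifications are routine case analyses, and uniqueness of \((i,p)\) is not needed.
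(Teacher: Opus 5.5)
Your proof is correct, but it takes a different route from the paper.

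The paper reuses the congruence $x\sim_P y :\Leftrightarrow x=y\vee P$ of Proposition~\ref{prop:emsharp} unchanged. That relation is provably not equality only under $\neg\neg P$, so the paper argues as follows:
\begin{enumerate}
\item Assume $\neg\neg P$, and apply $\mathrm{MarkovForm}^{-}$ to obtain $\sim_P=\equiv_{i,p}$.
\item From $i\sim_P i+p$ and $i\neq i+p$, conclude $P$.
\item This proves $\neg\neg P\to P$ for every proposition.
\item Apply it to $P\vee\neg P$, whose double negation is intuitionistically provable, to obtain $\mathrm{EM}$.
\end{enumerate}

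You instead join $P$ to the fixed congruence $\equiv_{1,1}$. The hypothesis of $\mathrm{MarkovForm}^{-}$ then holds outright via $1\approx_P 2$. You read the decision off the decidable index: $i=0$ exactly when $P$ holds. Your congruence check and both case analyses on $i$ are correct.

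The paper's argument is shorter and needs no new construction. Yours avoids the double-negation detour and gives a stronger sharpness result. Because $\approx_P$ comes with an explicit pair $1\neq 2$ of related elements, your argument shows something more. Even the form of Theorem~\ref{thm:catforced} with only the decidability hypothesis deleted (keeping the explicit witness pair) already implies $\mathrm{EM}$. So it is decidability, not the witness pair, that keeps form (a) free of nonconstructive principles.

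The paper's $\sim_P$ cannot give this. It has an explicit pair of distinct related elements only when $P$ is already known.
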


\begin{proof}
Let \(P\) be a proposition, and let \(\sim_P\) be the additive congruence from
the proof of Proposition~\ref{prop:emsharp}, defined by
\[
x\sim_P y\quad:\Longleftrightarrow\quad x=y\ \vee\ P.
\]
Assume \(\neg\neg P\). If \(\sim_P\) is equality, then \(P\)  implies
\(0\sim_P1\), and hence \(0=1\). Therefore \(\neg P\), contradicting
\(\neg\neg P\). Thus \(\sim_P\) is not equality.

By \(\mathrm{MarkovForm}^{-}\), there are \(i\geq0\) and \(p\geq1\) such that
\(
\sim_P\;=\;\equiv_{i,p}.
\)
Since \(i\equiv_{i,p}i+p\), we have \(i\sim_P i+p\), and therefore
\[
i=i+p\quad\vee\quad P.
\]
Since \(p\geq1\), the equality \(i=i+p\) is impossible. Hence \(P\). We have
therefore proved
\[
\neg\neg P\longrightarrow P
\]
for every proposition \(P\). Applied to \(P\vee\neg P\), whose double negation
is provable
intuitionistically, this gives \(\mathrm{EM}\).\end{proof}}

{The relation \(\sim_P\) is not assumed decidable. Indeed, a decision procedure
for \(\sim_P\) implies \(P\), since
\(
0\sim_P1\Longleftrightarrow P.
\) 
Hence \(\mathrm{MarkovForm}\) does not apply.

\begin{corollary}\label{cor:vacuous}
Under \(\mathrm{EM}\), the decidability hypothesis of
Theorem~\ref{thm:catMarkov's} excludes no congruence. However, it cannot be
removed from the Markov form:
\[
\mathrm{MarkovForm}^{-}\Longleftrightarrow\mathrm{EM},
\qquad
\mathrm{MarkovForm}\Longleftrightarrow\mathrm{MP}.
\]
Moreover, \(\mathrm{MP}\) does not imply \(\mathrm{EM}\)~\cite{ishihara}.
\end{corollary}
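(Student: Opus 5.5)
The corollary has three parts, and I will treat each separately, assembling them from results already proved.

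\emph{First assertion.} Under \(\mathrm{EM}\) the decidability hypothesis excludes no congruence. I will invoke Proposition~\ref{prop:decidableEM}: assuming \(\mathrm{EM}\), every additive congruence \(c\) on \((\N,+,0)\) is equal to a decidable relation, namely either equality or some \(\equiv_{i,p}\). So any congruence that is not equality is extensionally equal to a decidable one and falls within the scope of Theorem~\ref{thm:catMarkov's}. Equivalently, under \(\mathrm{EM}\) the statements \(\mathrm{MarkovForm}\) and \(\mathrm{MarkovForm}^{-}\) have the same instances up to equality of relations. The transfer to \((\Ndel,+,0)\) is through the isomorphism \(D\), exactly as in Section~\ref{sec:classificationledger}.

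\emph{The equivalence \(\mathrm{MarkovForm}^{-}\Longleftrightarrow\mathrm{EM}\).} The forward direction is Proposition~\ref{prop:decsharp}. For the converse, I assume \(\mathrm{EM}\) and take a congruence \(c\) that is not equality. Theorem~\ref{thm:catclassical} gives that \(c\) is either equality or \(\equiv_{i,p}\) for a unique pair. The first alternative contradicts the hypothesis, and \(\bot\) gives the conclusion ex falso, so \(c=\equiv_{i,p}\). Uniqueness comes from Lemma~\ref{lem:ipunique}. The second equivalence \(\mathrm{MarkovForm}\Longleftrightarrow\mathrm{MP}\) is Corollary~\ref{cor:exact}, which I simply restate.

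\emph{The final clause.} That \(\mathrm{MP}\) does not imply \(\mathrm{EM}\) is a known independence result, and I will cite \cite{ishihara} rather than reprove it. If a self-contained argument is wanted, the plan is a model in which \(\mathrm{MP}\) holds but \(\mathrm{EM}\) fails. The natural choice is a realizability model (Kleene realizability with a classical metatheory), which validates \(\mathrm{MP}\) via unbounded search, while falsifying \(\mathrm{EM}\) for the halting predicate. This is the only step that is not a direct combination of earlier results. It lies outside the paper's own development, so citation is the intended route. The main point to state carefully is the reading of ``cannot be removed'': because \(\mathrm{MarkovForm}^{-}\) is equivalent to \(\mathrm{EM}\), which is strictly stronger than \(\mathrm{MP}\), dropping decidability strictly raises the price.
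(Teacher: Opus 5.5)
Your proposal is correct and follows the paper's own proof essentially step for step. You use Proposition~\ref{prop:decidableEM} for the first assertion, Proposition~\ref{prop:decsharp} together with Theorem~\ref{thm:catclassical} (discarding the equality alternative) and Lemma~\ref{lem:ipunique} for \(\mathrm{MarkovForm}^{-}\Longleftrightarrow\mathrm{EM}\), Corollary~\ref{cor:exact} for the second equivalence, and a citation of \cite{ishihara} for the separation of \(\mathrm{MP}\) from \(\mathrm{EM}\), with an optional realizability sketch for the latter that the paper does not include.
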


\begin{proof}
The first statement follows from Proposition~\ref{prop:decidableEM}.
Proposition~\ref{prop:decsharp} gives
\[
\mathrm{MarkovForm}^{-}\Longrightarrow\mathrm{EM}.
\]
Conversely, assume \(\mathrm{EM}\), and let \(c\) be an additive congruence
which is not equality. By Theorem~\ref{thm:catclassical}, we have
\(
c=\equiv_{i,p}
\)
for some \(i\geq0\) and \(p\geq1\), and uniqueness follows from
Lemma~\ref{lem:ipunique}. Thus
\[
\mathrm{EM}\Longrightarrow\mathrm{MarkovForm}^{-}.
\]
The second equivalence is Corollary~\ref{cor:exact}. The strict separation
between \(\mathrm{MP}\) and \(\mathrm{EM}\) is given in~\cite{ishihara}.
\end{proof}}

For the classification  considered here, the exact prices are
\(
\varnothing,\{\mathrm{MP}\},\{\mathrm{EM}\}.
\)
No separate \(\mathrm{LPO}\)-level occurs.

{\section{Lean formalization}\label{sec:lean}

The main results are formalized in Lean~4~\cite{lean4} over
Mathlib~\cite{mathlib}. The formalization used here is contained in the repository
\texttt{actual-mathematics}%
\footnote{\url{https://github.com/jonwashburn/actual-mathematics}}. The declarations used in this paper
are in the module families
\texttt{ActualMathematics.DeltaKernel} and
\texttt{ActualMathematics.DeltaForced}.
{The rigidity declarations belong to the module family
\texttt{ActualMathematics.Rigidity}.}

The command \texttt{\#print axioms} is less precise than the ledger of
Section~\ref{ssec:ledger}. It records the axioms used by a Lean declaration,
but not the logical principles used inside a derivation. Table~\ref{tab:lean}
lists the corresponding Lean declarations.

\begin{table}[ht]
\caption{Main results and their Lean~4 declarations.}
\label{tab:lean}
\centering
\footnotesize
\begin{tabular}{@{}>{\raggedright\arraybackslash}p{0.34\textwidth}>{\raggedright\arraybackslash}p{0.60\textwidth}@{}}
\toprule
\textbf{Result} & \textbf{Lean declaration} \\
\midrule
Theorem~\ref{thm:initiality} & \lean{base\_initial} \\
Theorem~\ref{thm:embedding} & \lean{baseRec\_injective} \\
Theorem~\ref{thm:rigidity} & \lean{base\_categorical},
  \lean{base\_unique\_iso}, \lean{base\_rigidity} \\Proposition~\ref{prop} & \lean{bootstrap\_initiality} \\
Theorem~\ref{thm:sound} & \lean{sound\_forced} \\
Theorem~\ref{thm:construction} & \lean{bootstrap\_tower\_export},
  \lean{delta\_bootstrap} \\
Proposition~\ref{thm:standardencodable} & \lean{deltaForced\_nat},
  \lean{deltaForced\_int}, \lean{deltaForced\_rat} \\
Theorem~\ref{thm:catalogue} & \lean{orbit\_congruence\_catalogue} \\
Theorem~\ref{thm:recognition} & \lean{recognizer\_dichotomy} \\
Proposition~\ref{prop:core} & \lean{catalogue\_core} \\
Theorem~\ref{thm:catforced} & \lean{nat\_catalogue\_forced},
  \lean{orbit\_catalogue\_forced} \\
Theorem~\ref{thm:catMarkov's} & \lean{nat\_catalogue\_markov},
  \lean{orbit\_catalogue\_markov} \\
Theorem~\ref{thm:catclassical} & \lean{nat\_catalogue\_classical},
  \lean{orbit\_catalogue\_classical} \\
Proposition~\ref{prop:mpsharp} & \lean{markov\_of\_catalogue} \\
Proposition~\ref{prop:emsharp} & \lean{em\_of\_catalogue} \\
Corollary~\ref{cor:exact} & \lean{catalogue\_markov\_iff\_mp},
  \lean{catalogue\_classical\_iff\_em} \\
\bottomrule
\end{tabular}
\end{table}

Table~\ref{tab:lean} contains  the main results. The auxiliary
results, including the semiring laws and the finite index-period monoids,
are formalized in the same module families.

At the audited commit, the declarations checked by the audit manifest,\\
except \lean{orbit\_congruence\_catalogue} and
\lean{recognizer\_dichotomy}, have axiom sets contained in
\(
\{\texttt{propext},\ \texttt{Quot.sound}\}.
\)
The two exceptions also use \texttt{Classical.choice}.
They give the
classical form of the classification, which by
Corollary~\ref{cor:exact} is equivalent to \(\mathrm{EM}\). In the module\\
\texttt{DeltaKernel.RecognitionQuotientsLedger}, the required logical
principle is included as a hypothesis.  These versions do not use
\texttt{Classical.choice}. For fixed \(i\) and \(p\), the congruence
\lean{ipCon} and the quotient \(M(i,p)\) are choice-free.

The declarations \lean{base\_initial}, \lean{baseRec\_injective},
\lean{base\_categorical}, \lean{base\_unique\_iso}, and\\
\lean{base\_rigidity} use neither \texttt{Classical.choice} nor excluded
middle. The declaration \lean{base\_categorical} depends on no axioms.
The declaration \lean{baseRec\_injective} is stated for a Peano
realization, but its proof uses only injectivity of the step and the
condition that the base point is not a successor.

\begin{remark}\label{rem:invisible}
 The axiom audit does not distinguish the two prices separated in
Section~\ref{ssec:exactness}. In Lean,  EM and arbitrary
propositional decidability both appear as a dependency on
\texttt{Classical.choice}. In the ledger formalization, the required
principle is included as a hypothesis, so the two prices remain separate.
\end{remark}}

\section{Conclusion}\label{sec:conclusion}

We separated a distinction witness from the generated \(\delta\)-orbit. A
distinction witness gives two unequal elements, but it gives no operation which
can be iterated. The \(\delta\)-orbit is instead generated from the empty record
\(0\) by the one-step extension \(S\).  Its inductive definition yields
successor nonzero, successor injectivity, and induction.  Together with the recursively defined operations $+$ and $\cdot$, it presents the
natural-number object \(\Ndel\).

{Moreover, $\Ndel$ is initial among $\delta$-algebras. If the step is
injective and the base point is not in its image, the canonical
homomorphism from $\Ndel$ is injective. Hence every such realization
contains a canonical copy of $\Ndel$. If the realization also satisfies
induction, it is uniquely isomorphic to $\Ndel$. Thus the Peano conditions
characterize the generated $\delta$-orbit; they do not produce a different
arithmetic object.}

Arithmetic, in the form of \(\Ndel\), is thus the canonical object generated by iterated distinction. The Peano conditions characterize this object up to a unique isomorphism rather than provide a second construction of the natural numbers. Each element of the orbit records a finite number of successive distinction steps, and the later stages of the number tower are constructed explicitly from these records by quotient constructions. {The classification of recognition quotients carries an exact metatheoretic logical cost}, and the reverse implications show that the Markov and excluded-middle prices are exact.

The corresponding \(\delta\)-calculus is an intuitionistic first-order proof
system over the signature \(\{0,S,+,\cdot\}\). Its term algebra is free on the de Bruijn variables. Each derivation carries a ledger, and the forced fragment is sound in the standard model \(\mathfrak N\).

{We then constructed the choice-free tower
\(
\Ndel\hookrightarrow\Zdel\hookrightarrow\Qdel.
\)
The metatheoretic number systems $\N$, $\Z$, and $\Q$ are
$\delta$-encodable. The tower constructions and the explicit encodings
use no choice principle. The continuum remains the next unresolved boundary, since the
Cauchy and Dedekind constructions need not be equivalent constructively.}


We also classified the recognition quotients of the additive monoid
\((\Ndel,+,0)\). Assuming \(\mathrm{EM}\), every recognizer
\(
r\colon(\Ndel,+,0)\longrightarrow(V,\ast,e)
\)
is either injective or has kernel congruence \(\equiv_{i,p}\) for a unique
pair \(i\geq0\), \(p\geq1\). In the second case, its recognition quotient is
isomorphic to the finite monogenic monoid \(M(i,p)\). 


Finally, the classification of the recognition quotients admits three forms.
A decidable congruence together with an explicit pair of distinct related elements implies the classification without additional nonconstructive principles. For a decidable congruence which is
not equality, it is conditional on \(\mathrm{MP}\). For an arbitrary
congruence, the dichotomy between equality and an index-period congruence is
conditional on \(\mathrm{EM}\). The reverse implications show that
 \(\mathrm{MP}\) and \(\mathrm{EM}\) are also necessary, so these
metatheoretic logical prices are exact.  The decidability assumption is classically redundant, but
constructively it separates the Markov form from the excluded-middle form.
This distinction is not visible to the axiom audit of the host system.

\bigskip

\vspace{6pt}
\noindent {\bf Author Contributions:} {Conceptualization, J.W.; Methodology, J.W. and M.Z.; Software, J.W.; Validation, J.W. and M.Z.; Formal Analysis, M.Z. and J.W.; Investigation, J.W. and M.Z.; Resources,
J.W.; Writing---Original Draft Preparation, J.W.; Writing---Review and Editing, M.Z. and J.W.; Funding Acquisition, J.W. All authors have read and agreed to the published version of the~manuscript.}

\end{document}